%%%%%%%%%%%
%
%     KO & KMD, version of February 2021
%%%%%%%%%%%%%%%%%%%%%%%%%
\documentclass[12pt]{amsart}
\usepackage{amssymb, amsmath, amsthm, latexsym, array,euscript}
\usepackage{fullpage}
\usepackage{verbatim}

\usepackage{xcolor}
\definecolor{darkblue}{rgb}{0,0,.5}
\definecolor{darkgreen}{rgb}{.2,0.5,.2}
\usepackage[colorlinks=true,linkcolor=darkblue,urlcolor=violet,citecolor=magenta]{hyperref}

\numberwithin{equation}{section}

\setcounter{tocdepth}{1}

\input{cyracc.def}

\font\tencyr=wncyr10 %scaled \magstephalf
\font\tencyi=wncyi10 %scaled \magstephalf
\font\tencysc=wncysc10 %scaled \magstephalf

\def\rus{\tencyr\cyracc}
\def\rusi{\tencyi\cyracc}
\def\rusc{\tencysc\cyracc}

\newtheorem{thm}{Theorem}[section]
%[section]
\newtheorem{lm}[thm]{Lemma}%[section]
\newtheorem{cl}[thm]{Corollary}
\newtheorem{prop}[thm]{Proposition}

\theoremstyle{remark}
\newtheorem{ex}[thm]{Example}%[section]
\newtheorem{rmk}[thm]{Remark}%[section]
%%[section]

\theoremstyle{definition}
\newtheorem{df}[thm]{Definition}%%[section]

%%%%%%%%%%   Dima %%%%%%%%%%

%%%%   Fraktur letters  %%%%%%%

%\newcommand {\f}{{\mathfrak f}}
\newcommand {\g}{{\mathfrak g}}
\newcommand {\h}{{\mathfrak h}}

\newcommand {\q}{{\mathfrak q}}
\newcommand {\rr}{{\mathfrak r}}

\newcommand {\te}{{\mathfrak t}}

\newcommand {\z}{{\mathfrak z}}

%%%%%%%%%   Lie algebras %%%%%%%%%%%%%

\newcommand {\sln}{{\mathfrak {sl}}_n}

%%%%%%%%%  Goth Letters  %%%%%%%%%%

\newcommand {\eus}{\EuScript}

\newcommand {\gA}{{\eus A}}

\newcommand {\gN}{{\eus N}}

\newcommand {\gS}{{\eus S}}
\newcommand {\gZ}{{\eus Z}}

%%%%%%%%   Greek letters   %%%%%%%

\newcommand {\vp}{\varphi}
\newcommand {\vth}{\vartheta}

%%%%%%%%%% Kos   %%%%%%%%%%%%%%

\newcommand{\gt}{\mathfrak}

\newcommand{\GL}{{\rm GL}}

\newcommand{\Aut}{\mathsf{Aut}}
\newcommand{\Ann}{\mathrm{Ann}}
\newcommand{\ad}{\mathrm{ad}}

\newcommand{\id}{{\rm id}}
\newcommand{\ind}{{\rm ind\,}}

\newcommand{\codim}{\mathrm{codim\,}}
\newcommand{\rk}{\mathrm{rk\,}}

\newcommand{\Lie}{\mathsf{Lie\,}}

\newcommand{\spe}{\mathsf{Spec\,}}

\newcommand{\gr}{\mathrm{gr\,}}
\newcommand{\oth}{\mathsf{ord}(\vartheta)}

%%%%%%%%%%%% Frak letters %%%%%%%%%%%%%

%%%%%%%%   Calligraphic letters  %%%%%%%
\newcommand {\ca}{{\mathcal A}}

\newcommand {\cN}{{\mathcal N}}

\newcommand {\cz}{{\mathcal Z}}

\newcommand{\wa}{\tilde{{\mathcal A}}}

\newcommand {\trdeg}{{\mathrm{tr.deg\,}}}

\newcommand {\mK}{{\Bbbk}}
\newcommand {\bbk}{{\Bbbk}}

\newcommand {\Z}{{\mathbb Z}}

\newcommand {\md}{/\!\!/}
\newcommand {\isom}{\stackrel{\sim}{\rightarrow}}
\newcommand {\GR}[2]{{\mathsf{{#1}}}_{#2}}

\newcommand {\BZ}{{\mathbb Z}}
\newcommand {\BN}{{\mathbb N}}
\newcommand {\beq}{\begin{equation}}
\newcommand {\eeq}{\end{equation}}

\newcommand {\hm}{{\h\langle m\rangle}}
\newcommand {\hmi}{{\h\langle m{+}1\rangle}}
\newcommand {\qqn}{{\q\langle n\rangle}}

\newcommand {\rg}{{\rangle}}
\renewcommand {\lg}{{\langle}}

\renewcommand{\le}{\leqslant}
\renewcommand{\ge}{\geqslant}

%%%%%%%%%% Changing the way equations are numbered

\newcommand {\BP}{{\mathbb P}}
\newcommand{\U}{{\eus U}}

\newcommand{\tr}{ {\rm tr}}

\newcommand{\bb}{\boldsymbol{b}}
\newcommand{\bF}{\boldsymbol{F}\!}
\newcommand{\bff}{\boldsymbol{f}}

\newcommand {\cG}{{\mathcal G}}
\newcommand {\Gz}{{\mathcal G}(\vec z)}

\newcommand{\wg}{{\widehat{\h }}}
\newcommand{\wh}{{\widehat{\h}}}

\begin{document}
\hfill {\scriptsize February 19, 2021}
\vskip1ex

\title[Periodic automorphisms and compatible brackets]{Periodic automorphisms, compatible Poisson brackets, and Gaudin subalgebras}
\author[D.\,Panyushev]{Dmitri I. Panyushev}
\address[D.P.]%
{Institute for Information Transmission Problems of the R.A.S, Bolshoi Karetnyi per. 19,
Moscow 127051, Russia}
\email{panyushev@iitp.ru}
\author[O.\,Yakimova]{Oksana S.~Yakimova}
\address[O.Y.]{Institut f\"ur Mathematik, Friedrich-Schiller-Universit\"at Jena,  07737 Jena,
Deutschland}
\email{oksana.yakimova@uni-jena.de}
\thanks{The first author is funded by RFBR, project {\rus N0} 20-01-00515.
The second author is funded by the DFG (German Research Foundation) --- project number 404144169.}
\keywords{index of Lie algebra, contraction, commutative subalgebra, symmetric invariants}
\subjclass[2010]{17B63, 14L30, 17B08, 17B20, 22E46}
\dedicatory{To the memory of our teacher Ernest Borisovich Vinberg}
\maketitle

\tableofcontents
%%%%%%%%%%%   Introduction  %%%%%%
\section*{Introduction}
\label{sect:intro}

\noindent
The ground field $\mK$ is algebraically closed and $\mathsf{char}(\mK)=0$. 
%In this article, we prove quite a few marvellous outstanding results that are of great great importance for the future development of Mathematics.
Let $\q=(\q, [\,\,,\,])$ be a finite-dimensional algebraic Lie algebra, i.e., $\q=\Lie Q$, where $Q$ is a 
connected affine algebraic group. The dual space $\q^*$ is a Poisson variety, i.e., the 
algebra of polynomial functions on $\q^*$, $\bbk[\q^*]\simeq \gS(\q)$, is equipped with the Lie--Poisson 
bracket $\{\,\,,\,\}$. Here $\{x,y\}=[x,y]$ for $x,y\in\q$. 
Poisson-commutative subalgebras of $\bbk[\q^*]$ are  important tools for the study of geometry of
the coadjoint representation of $Q$ and representation theory of $\q$. 

\subsection{}    \label{subs:old}
There is a well-known method, {\it the Lenard--Magri scheme}, for constructing ``large" 
Poisson-commutative subalgebras of $\bbk[\q^*]$, which is related to {\it compatible Poisson brackets}, 
see e.g.~\cite{GZ}.  
Two Poisson brackets $\{\,\,,\,\}'$ and $\{\,\,,\,\}''$ are said to be {\it compatible}, if any linear combination 
$\{\,\,,\,\}_{a,b}:=a\{\,\,,\,\}'+b\{\,\,,\,\}''$ with $a,b\in\bbk$ is a Poisson bracket. Then one defines a certain 
dense open subset $\Omega_{\sf reg}\subset\bbk^2$ that corresponds to the {\it regular} brackets in the 
pencil $\mathcal P=\{ \{\,\,,\,\}_{a,b}\mid (a,b)\in\bbk^2\}$.  
If $\cz_{a,b}\subset \gS(\q)$ denotes the Poisson centre of  $(\gS(\q), \{\,\,,\,\}_{a,b})$, then
the subalgebra $\gZ\subset\gS(\q)$ generated by  $\cz_{a,b}$
with $(a,b)\in\Omega_{\sf reg}$ is Poisson-commutative w.r.t.~$\{\,\,,\,\}'$ and $\{\,\,,\,\}''$, see 
Section~\ref{subs:construct-2} for more details. An obvious first step is to take the initial Lie--Poisson 
bracket $\{\,\,,\,\}$ as $\{\,\,,\,\}'$. The rest depends on a clever choice of $\{\,\,,\,\}''$.
The goal of this article is to introduce a new class of compatible
Poisson brackets, study the respective subalgebras $\gZ$, and provide applications.

Let us recall some known pencils of compatible Poisson brackets. 
\\ \indent
\textbullet \quad For any $\gamma\in\q^*$, the Poisson bracket    
$(x,y)\mapsto \{x,y\}_{\gamma}:=\gamma([x,y])$, $x,y\in\q$, is compatible with~$\{\ ,\,\}$, 
cf.~\cite[Example~1.8.16]{duzu}. This leads to the fabulous {\it Mishchenko--Fomenko (= MF) 
subalgebras} $\gZ={\ca}_\gamma\subset\gS(\q)$, which first appear in~\cite{mf} for semisimple 
$\q$. To the best of our knowledge,  the very term ``Mishchenko--Fomenko'' is coined by 
Vinberg~\cite{vi90}.
\\ \indent
\textbullet \quad In~\cite{OY}, we introduced compatible Poisson brackets related to a $\BZ_2$-grading 
$\q=\q_0\oplus\q_1$ and studied the respective subalgebra $\gZ=\gZ(\q,\q_0)$.
Here the second bracket is defined by the relations $\{x,y\}''=\{x,y\}$ if $x\in\q_0$ and $\{x,y\}''=0$ if $x,y\in\q_1$. As well as with 
MF subalgebras, one cannot get too far if $\q$ is arbitrary.
Assuming that $\q=\g$ is reductive, we obtained a number of interesting results on $\gZ(\g,\g_0)$.
We proved that:
\\ \indent
-- $\gZ(\g,\g_0)$ is a  Poisson-commutative subalgebra of $\gS(\g)^{\g_0}$ having the maximal 
possible transcendence degree, which equals $\trdeg \gZ(\g,\g_0)=(\dim\g_1+\rk\g+\rk \g_0)/2$; 
\\ \indent
-- with only four exceptions related to exceptional Lie algebras,  $\gZ(\g,\g_0)$ is a polynomial algebra whose algebraically independent generators are explicitly described;
\\ \indent
--  if  $\g$ is a classical Lie algebra and $\g_0$ contains a regular nilpotent element of $\g$, then  
$\gZ(\g,\g_0)$ is a maximal Poisson-commutative subalgebra of $\gS(\g)^{\g_0}$.
\\
The proofs exploit numerous invariant-theoretic properties of the adjoint representation of 
$\g$~\cite{ko63} and their analogues for the isotropy representation $G_0\to {\rm GL}(\g_1)$~\cite{kr71}.

\subsection{}    \label{subs:new}
Results of the present paper stem from a surprising observation that if $\q$ is equipped with
a $\BZ_m$-grading with any $m\ge 2$, then one can naturally construct a compatible Poisson bracket
$\{\,\,,\,\}''$ (Section~\ref{sect:vartheta-&-compatible}). In this case, all Poisson brackets in $\mathcal P$
are linear and there are two lines $l_1,l_2\subset \bbk^2$ such that
$\Omega=\bbk^2\setminus (l_1\cup l_2)\subset \Omega_{\sf reg}$ and the Lie algebras corresponding to
$(a,b)\in \Omega$ are isomorphic to $\q$. The lines $l_1$ and $l_2$ give rise to new Lie algebras, denoted $\q_{(0)}$ and $\q_{(\infty)}$. These new algebras are different contractions of $\q$. Let 
$\ind\q$ denote the {\it index\/} of $\q$ (see Section~\ref{sect:prelim}). Then $\ind\q\le\ind\q_{(n)}$,
$n\in\{0,\infty\}$, and our first task is to realise whether it is true that $\ind\q=\ind\q_{(n)}$.
Although basic theory can be developed for arbitrary $\q$, essential applications require 
a better class of Lie algebras, and we eventually stick to the semisimple case. Let 
$\vth\in\mathsf{Aut}(\g)$ be an automorphism of order $m$ and $\g=\bigoplus_{j=0}^{m-1}\g_j$ the 
associated  $\BZ_m$-grading, i.e., if $\zeta=\sqrt[m]1$ is primitive, then 
\[
 \g_j=\{x\in\g\mid \vth(x)=\zeta^j x\} .
\] 
The invariant-theoretic base for our consideration is 
Vinberg's theory of ``$\vth$-groups'' (i.e., theory of orbits and invariants for representations of reductive
groups related to the periodic automorphisms of $\g$), see~\cite{vi76,vi79}.

A bad news is that, for $m\ge 3$, the invariant-theoretic picture related to $\vth$ and properties of
$\g_{(0)}$ become more complicated. For instance, if $m=2$, then 
$\g_{(0)}\simeq \g_0\ltimes\g_1^{\sf ab}$ (semi-direct product) and it is known that here 
$\ind\g_{(0)}=\ind\g=\rk\g$~\cite{p07}. For $m\ge 3$, the number $\ind\g_{(0)}$ remains mysterious. 
We suspect that it is equal to $\rk\gt g$ for any $\vth$. Other technical difficulties are discussed in 
Section~\ref{subs:g-null}. Nevertheless, we succeeded in computing $\ind\g_{(\infty)}$, see 
Theorem~\ref{thm:ind-inf}, and can state that $\ind\g_{(\infty)}=\rk\g$ if and only if $\g_0$ is abelian. 

A good news is that there are still many interesting cases (automorphisms $\vth$), where analogues 
of results of~\cite{OY} are valid and also some unexpected applications pop up.  Write $\gZ(\g,\vth)$ for 
the Poisson-commutative subalgebra associated with the pencil of compatible Poisson brackets related 
to $\vth$. Let $\cN$ be the set of nilpotent elements of $\g$. Suppose that $\vth$ has the following 
properties:
\\ \indent
(1) \ $\g_1$ contains a regular semisimple element of $\g$; 
\\ \indent
(2) \ each irreducible component of
$\g_1\cap\cN$ contains a regular nilpotent element of $\g$.
\\
Under these assumptions we prove that $\gZ(\g,\vth)$ has the same properties as the above
algebra $\gZ(\g,\g_0)$, see Sections~\ref{sect:free}, \ref{sect:max}.

\subsection{} 
Another good news is that there is a special case of $(\g,\vth)$, when $\gZ(\g,\vth)$ is as 
good as possible and it has a nice quantisation. Namely, let $\h$ be a non-abelian simple Lie algebra 
and $\g=\h^{m}$ be the direct sum of $m$ copies of $\h$.  If $\vth$ is the cyclic permutation of the 
summands, then $\g_0=\Delta_\h\simeq\h$ and we prove in Section~\ref{sect:cyc} that  
$\gZ=\gZ(\h^{m},\vth)$ is a polynomial ring in  $\frac{1}{2}((m-1)\dim\h+(m+1)\rk\h)$ generators. 
Furthermore, $\gZ$ is a maximal Poisson-commutative subalgebra of $\gS(\g)^{\g_0}$. 
The quantisation problem asks for a lift of $\gZ$ to the enveloping algebra $\U(\g)$, i.e., for a commutative 
algebra $\tilde \gZ\subset \U(\g)$ such that $\gr(\tilde\gZ)=\gZ$. To describe $\tilde\gZ$ in this context, 
we need some preparations.

The enveloping algebra $\U(\h [t,t^{-1}])$ of the loop algebra $\h[t,t^{-1}]$ contains a large 
commutative subalgebra $\z(\widehat{\h })$ of infinite transcendence degree,  known as the 
{\it Feigin--Frenkel centre}~\cite{ff:ak}. Actually $\z(\wh)\subset \U(\wg_-)$, where  
$\wg_-=t^{-1}\h [t^{-1}]$. Having a vector $\vec z\in(\bbk^\star)^m$, one  defines  
a  homomorphism $\rho_{\vec z}\!:\U(\wg_-) \to \U(\g)$. The image of $\z(\wh)$ under $\rho_{\vec z}$  
is called the  {\it Gaudin subalgebra} $\Gz$ \cite{FFRe}. 
For the case, where the entries of $\vec z$ are pairwise distinct $m$-th roots of unity,
we provide a simpler construction of $\Gz$ (Proposition~\ref{prop:new-G})
and show that  $\gr\!(\Gz)=\gZ(\h^{m},\vth)$, see Theorem~\ref{q}.

It is worth noting that, for the MF subalgebras $\ca_\gamma\subset \gS(\h)$, the quantisation problem 
was posed by Vinberg~\cite{vi90}. 
A solution given by Rybnikov~\cite{r:si} states that the image of $\z(\wh)$ under a certain homomorphism 
$\varrho_\gamma : \U(\wh_-)\to \U(\h)$, depending on $\gamma\in\h^*$, is the 
{\it quantum  MF subalgebra} $\wa_\gamma$ and one has $\gr(\wa_\gamma)=\ca_\gamma$ in many 
cases.

Let us identify $\wg_-$ with the quotient space $\h [t,t^{-1}]/\h [t]$.  Then $\h [t]$ acts on 
$t^{-1}\h [t^{-1}]$ and correspondingly on $\gS(\wg_-)$.  By~\cite{ff:ak,f:lc}, one has  
\beq           \label{hat-inv}
         \gr\!(\z(\wh))=\gS(\wg_-)^{\h[t]} .
\eeq 
Yet another property of the Poisson-commutative algebra $\gr\!(\z(\wh))$ is that it is a polynomial ring 
in infinitely many variables, by a direct generalisations of a Ra\"is--Tauvel theorem~\cite{rt}. 

Suppose that $\vth\in\Aut(\h)$. In Section~\ref{sec-twist}, we consider the $\vth$-twisted version of 
$\gr\!(\z(\wh))$, a certain subalgebra $\gZ(\wh_-,\vth)$ of $\gS((\wg_-)^\vth)$ of infinite transcendence 
degree.  Assuming the equality $\ind\h_{(0)}=\rk\h$, we prove that $\gZ(\wh_-,\vth)$ is 
Poisson-commutative, see Theorem~\ref{twist-t}. In many cases, $\gZ(\wh_-,\vth)$ is a polynomial ring. 

Our general reference for semisimple Lie groups and algebras is \cite{t41}.

%%%%%%%%%%  Section  
\section{Preliminaries on Poisson brackets and polynomial contractions}
\label{sect:prelim}

\noindent
Let $Q$\/ be a connected affine algebraic group with Lie algebra $\q$. The  symmetric algebra of 
$\q$ over $\mK$ is $\BN$-graded, i.e., $\gS(\q)=\bigoplus_{i\ge 0}\gS^i(\q)$. It is identified with the 
algebra of polynomial functions on the dual 
space $\q^*$, and we also write $\mK[\q^*]=\bigoplus_{i\ge 0}\bbk[\q^*]_i$ for it. 
\subsection{The coadjoint representation}
\label{subs:coadj}
The group $Q$ acts on $\q^*$ via the coadjoint representation and then $\ad^*\!: \q\to \mathrm{GL}(\q^*)$ 
is the {\it coadjoint representation\/} of $\q$. The algebra 
of $Q$-invariant polynomial functions on $\q^*$ is denoted by $\gS(\q)^{Q}$ or $\mK[\q^*]^Q$.
Write $\mK(\q^*)^Q$ for the field of $Q$-invariant rational functions on $\q^*$.
\\ \indent
Let $\q^\xi=\{x\in\q\mid \ad^*(x){\cdot}\xi=0\}$ be the {\it stabiliser\/} in $\q$ of $\xi\in\q^*$. The 
{\it index of\/} $\q$, $\ind\q$, is the minimal codimension of $Q$-orbits in $\q^*$. Equivalently,
$\ind\q=\min_{\xi\in\q^*} \dim \q^\xi$. By the Rosenlicht theorem (see~\cite[IV.2]{spr}), one also has 
$\ind\q=\trdeg\mK(\q^*)^Q$. Set $\bb(\q)=(\dim\q+\ind\q)/2$. 
Since the $Q$-orbits in $\q^*$ are even-dimensional, $\bb(\q)$ is an integer. If $\q$ is reductive, then  
$\ind\q=\rk\q$ and $\bb(\q)$ equals the dimension of a Borel subalgebra.  

The Lie--Poisson bracket in $\gS(\q)$ is defined on $\gS^1(\q)=\q$ by $\{x,y\}:=[x,y]$. It is then extended 
to higher degrees via the Leibniz rule. Hence $\gS(\q)$ has the usual associative-commutative structure 
and additional Poisson structure. Whenever we refer to {\sl subalgebras\/} of $\gS(\q)$, we always mean 
the associative-commutative structure. Then a subalgebra $\ca\subset \gS(\q)$ is said to be 
{\it Poisson-commutative}, if $\{H,F\}=0$ for all $H,F\in\ca$. It is well known that if $\ca$ is 
Poisson-commutative, then $\trdeg\ca\le \bb(\q)$, see e.g.~\cite[0.2]{vi90}. More generally, suppose that 
$\h\subset\q$ is a Lie subalgebra and $\ca\subset  \gS(\q)^\h$ is Poisson-commutative. Then
$\trdeg\ca\le \bb(\q)-\bb(\h)+\ind\h$, see~\cite[Prop.\,1.1]{m-y}.

The {\it centre\/} of the Poisson algebra $(\gS(\q), \{\,\,,\,\})$ is 
\[
    \cz(\q):=\{H\in \gS(\q)\mid \{H,F\}=0 \ \ \forall F\in\gS(\q)\} .
\]
Using the Leibniz rule, we obtain that $\cz(\q)$ is a graded Poisson-commutative subalgebra of $\gS(\q)$, which coincides with the algebra of symmetric invariants of $\q$, i.e.,
\[
    \cz(\q)=\{H\in \gS(\q)\mid \{H,x\}=0 \ \ \forall x\in\q\}=:\gS(\q)^\q=\bbk[\q^*]^\q .
\] 
As $Q$ is connected, we have $\gS(\q)^\q=\gS(\q)^{Q}=\mK[\q^*]^Q$. Since the quotient field of
$\mK[\q^*]^Q$ is contained in $\bbk(\q^*)^Q$, we deduce from the Rosenlicht theorem that
\beq    \label{eq:neravenstvo-ind}
    \trdeg (\gS(\q)^\q)\le \ind\q .
\eeq
The set of $Q$-{\it regular\/} elements of $\q^*$ is 
\beq       \label{eq:regul-set}
    \q^*_{\sf reg}=\{\eta\in\q^*\mid \dim \q^\eta=\ind\q\}=\{\eta\in\q^*\mid \dim Q{\cdot}\eta \ \text{ is maximal}\} .
\eeq
It is a dense open subset of $\q^*$. Set $\q^*_{\sf sing}=\q^*\setminus \q^*_{\sf reg}$.
We say that $\q$ has the {\sl codim}--$n$ property if $\codim \q^*_{\sf sing}\ge n$. 
The {\sl codim}--$2$ property  is going to be most important for us. 

For $\gamma\in\q^*$, let $\hat\gamma$ be the skew-symmetric bilinear form on $\q$ defined by 
$\hat\gamma(\xi,\eta)=\gamma([\xi,\eta])$ for $\xi,\eta\in\q$. It follows that
$\ker\hat\gamma=\q^\gamma$. The $2$-form $\hat\gamma$ is related to 
the {\it Poisson tensor (bivector)} $\pi$ of the Lie--Poisson bracket $\{\,\,,\,\}$ as follows.

Let $\textsl{d}H$ denote the differential of $H\in \gS(\q)=\bbk[\q^*]$. Then 
$\pi$ is defined by the formula
$\pi(\textsl{d}H\wedge \textsl{d}F)=\{H,F\}$ for $H,F\in\gS(\q)$. Then 
$\pi(\gamma)(\textsl{d}_\gamma H\wedge \textsl{d}_\gamma F)=\{H,F\}(\gamma)$ and therefore
$\hat\gamma=\pi(\gamma)$.
In this terms, $\ind\q=\dim\q-\rk\pi$, where $\rk\pi=\max_{\gamma\in\q^*}\rk\pi(\gamma)$. 

\subsection{Contractions and invariants}
\label{subs:contr-&-inv} 
We refer to \cite[Ch.\,7,\,\S\,2]{t41} for basic facts on contractions of Lie algebras.
In this article, we consider contractions of the following form. Let $\bbk^\star=\bbk\setminus\{0\}$ be 
the multiplicative group of $\bbk$ and 
$\vp: \bbk^\star\to \GL(\q)$, $s\mapsto \vp_s$, a polynomial representation. That is, 
the matrix entries of $\vp_{s}:\q\to \q$ are polynomials in $s$ w.r.t. some (any) basis of $\q$.
Define a new Lie algebra structure on the vector space $\q$ and associated Lie--Poisson bracket by 
\beq       \label{eq:fi_s}
      [x, y]_{(s)}=\{x,y\}_{(s)}:=\vp_s^{-1}[\vp_s( x), \vp_s( y)], \ x,y \in \q, \ s\in\bbk^\star.
\eeq
The corresponding Lie algebra is denoted by $\q_{(s)}$. Then $\q_{(1)}=\q$ and all these algebras 
are isomorphic. The induced $\bbk^\star$-action in the variety of structure constants in not necessarily 
polynomial, i.e., \ $\lim_{s\to 0}[x, y]_{(s)}$ may not exist for all $x,y\in\q$. Whenever such a limit exists, 
we obtain a new linear Poisson bracket, denoted $\{\,\,,\,\}_0$, and thereby a new Lie algebra $\q_{(0)}$, 
which is said to be a {\it contraction\/} of $\q$. If we wish to stress that this construction is determined 
by $\vp$, then we write $\{x, y\}_{(\vp,s)}$ for the bracket in~\eqref{eq:fi_s} and say that $\q_{(0)}$ is the 
$\vp$-{\it contraction\/} of $\q$ or is the {\it zero limit of $\q$ w.r.t.}~$\vp$.  
A criterion for the existence of $\q_{(0)}$
can be given in terms of Lie brackets of the $\vp$-eigenspaces in $\q$, see~\cite[Section\,4]{Y-imrn}. 
We identify all algebras $\q_{(s)}$ and 
$\q_{(0)}$ as vector spaces. The semi-continuity of index implies that $\ind\q_{(0)}\ge \ind\q$.

The map $\vp_s$, $s\in\bbk^\star$, is naturally extended to an invertible transformation of 
$\gS^j(\q)$, which we also denote by $\vp_s$. The resulting graded map 
$\vp_s:\gS(\q)\to\gS(\q)$ is nothing but the comorphism associated with $s\in\bbk^\star$ and the dual 
representation $\vp^*:\bbk^\star\to \GL(\q^*)$. Since $\gS^j(\q)$ has a basis that consists of 
$\vp(\bbk^\star)$-eigenvectors, any $F\in\gS^j(\q)$ can be written as $F=\sum_{i\ge 0}F_i$, 
where the sum is finite and $\vp_s(F_i)=s^iF_i\in\gS^j(\q)$. Let $F^\bullet$ denote the nonzero 
component $F_i$ with maximal $i$.

\begin{prop}[{\cite[Lemma~3.3]{contr}}]     \label{prop:bullet}
If $F\in\cz(\q)$ and $\q_{(0)}$ exists, then $F^\bullet\in \cz(\q_{(0)})$. 
\end{prop}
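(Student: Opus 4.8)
The plan is to work directly with the definition of the Poisson centre and the $\vp(\bbk^\star)$-grading on $\gS(\q)$. Fix $F\in\cz(\q)$. By the discussion preceding the statement, $F=\sum_{i\le d}F_i$ with $\vp_s(F_i)=s^iF_i$ and $F^\bullet=F_d$ the top nonzero component. I want to show $\{F^\bullet,x\}_0=0$ for every $x\in\q$, where $\{\,\,,\,\}_0$ is the Lie--Poisson bracket of $\q_{(0)}$.

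First I would exploit the fact that for each $s\in\bbk^\star$ the map $\vp_s$ is a Lie algebra isomorphism $\q_{(s)}\to\q$, hence a Poisson isomorphism $(\gS(\q),\{\,\,,\,\}_{(s)})\to(\gS(\q),\{\,\,,\,\})$. Since $F\in\cz(\q)$ is a Poisson central element for the \emph{original} bracket, $\vp_s^{-1}(F)$ is Poisson central for $\{\,\,,\,\}_{(s)}$; but $F$ is an eigenvector-combination, and more to the point it is cleaner to compute $\{\vp_s(F),\vp_s(x)\}=0$ and pull back. The upshot of such a computation is the identity $\{F,x\}_{(s)}=0$ for all $s\in\bbk^\star$ and all $x\in\q$ — i.e.\ $F$ remains Poisson central along the whole family. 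Now expand both sides in powers of $s$: writing $x\in\q=\gS^1(\q)$ as $x=\sum_j x_j$ with $\vp_s(x_j)=s^j x_j$, and using that $\{\,\,,\,\}_{(s)}=\vp_s^{-1}\{\vp_s(-),\vp_s(-)\}$ restricted to $\gS^1$ has matrix entries that are Laurent polynomials in $s$, one sees that $\{F,x\}_{(s)}$ is a Laurent polynomial in $s$ (with values in $\gS(\q)$) which vanishes identically; hence every coefficient vanishes.

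The key step is then to isolate the correct coefficient. Since $\q_{(0)}$ exists, the bracket $\{\,\,,\,\}_{(s)}$ on $\gS^1(\q)$ has a well-defined limit as $s\to0$, so it is a \emph{polynomial} (not merely Laurent) in $s$, with $\{\,\,,\,\}_0$ its constant term. Grading $F$ by the $\vp$-weight, the component of $\{F,x\}_{(s)}$ of top $\vp$-weight picks out $\{F^\bullet,x\}_0$ up to a nonzero scalar — concretely, if $x$ has $\vp$-weight $e$, then the $\vp_s$-eigencomponent of weight $d+e$ in $\{F,x\}_{(s)}$ is exactly $s^{d+e}\{F^\bullet,x\}_0 + (\text{terms of the form } s^{?}\{F_i, x\}_{\text{lower}})$, and a weight/degree bookkeeping shows the only surviving contribution to that eigencomponent at the appropriate power of $s$ is $\{F^\bullet,x\}_0$. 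Since the left side is $0$, we get $\{F^\bullet,x\}_0=0$. As $x\in\q$ was arbitrary, $F^\bullet\in\cz(\q_{(0)})$, using the characterisation $\cz(\q_{(0)})=\{H\mid\{H,x\}_0=0\ \forall x\in\q\}$ from Section~\ref{subs:coadj}.

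I expect the main obstacle to be the bookkeeping in the last step: making precise that, after expanding $\{F,x\}_{(s)}$ simultaneously in powers of $s$ and in $\vp$-weight, the highest-weight piece of the \emph{top} power of $s$ really is $\{F^\bullet,x\}_0$ and not contaminated by cross terms $\{F_i,x_j\}$ with $i<d$. The clean way to handle this is to extend $\vp$ to act on $\gS(\q)$, observe $\vp_s$ is an algebra automorphism so $\vp_s\{H,G\}_{(s)}=\{\vp_s H,\vp_s G\}$ is itself a polynomial identity in $s$, apply $\vp_s$ to the relation $\{F,x\}_{(s)}=0$ to get $\{\vp_s F,\vp_s x\}=0$, expand $\vp_s F=\sum_i s^i F_i$ and $\vp_s x=\sum_j s^j x_j$, and read off the top-degree coefficient in $s$: it is $\sum_{i+j\ \text{maximal}}\{F_i,x_j\}$, which forces the single leading term $\{F^\bullet, x^\bullet\}=0$ in $(\gS(\q),\{\,\,,\,\})$; translating this leading-term statement back through $\vp_s^{-1}$ and the definition of $\{\,\,,\,\}_0$ gives $\{F^\bullet,x\}_0=0$. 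Once this limiting argument is set up carefully it is routine, and it is exactly the content of \cite[Lemma~3.3]{contr}.
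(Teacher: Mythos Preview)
Your central claim in the second paragraph, that $\{F, x\}_{(s)} = 0$ for all $s \in \bbk^\star$, is false. What the isomorphism $\vp_s\!:\q_{(s)}\to\q$ gives you is that $\vp_s^{-1}(F)$ is central for $\{\,\,,\,\}_{(s)}$, i.e.\ $\{\vp_s^{-1}(F), x\}_{(s)} = 0$; but $\vp_s^{-1}(F) \ne F$ unless $F$ is $\vp$-homogeneous. A concrete counterexample: take $\q = \mathfrak{sl}_2$ with the $\BZ_2$-grading $\q_0 = \bbk h$, $\q_1 = \bbk e \oplus \bbk f$, and $F = h^2 + 4ef \in \cz(\q)$. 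Then $\{F, e\}_{(s)} = 4(1-s^2)he$, which vanishes only at $s = \pm 1$. The same example shows that the conclusion of your last paragraph, $\{F^\bullet, x^\bullet\} = 0$ in the \emph{original} bracket, is also false: here $\{4ef, e\} = -4he \ne 0$. So both the premise you work from and the intermediate statement you aim for are wrong; the ``translation back'' step cannot recover from this.

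The repair is short once you use the correct identity. Since $\q_{(0)}$ exists, one has $[\q_a, \q_b] \subset \bigoplus_{c \le a+b} \q_c$, and by the Leibniz rule each $\{F_i, x\}$ (for $x\in\q$ of $\vp$-weight $j$) has $\vp$-weight at most $i+j$, with its weight-$(i{+}j)$ component equal to $\{F_i, x\}_0$. In the identity $0 = \{F, x\} = \sum_{i\le d}\{F_i, x\}$ the weight-$(d{+}j)$ part can only come from $i = d$, giving $\{F^\bullet, x\}_0 = 0$. Equivalently, in the family language you prefer: start from the \emph{true} relation $\{\vp_s^{-1}(F), x\}_{(s)} = 0$, multiply through by $s^d$ to obtain $\sum_{i\le d} s^{d-i}\{F_i, x\}_{(s)} = 0$, and let $s \to 0$; since each $\{F_i, x\}_{(s)}$ is polynomial in $s$ with constant term $\{F_i, x\}_0$, the limit is $\{F^\bullet, x\}_0$.
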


%%%%%%%%%%%   Section 2  %%%%%%
\section{Automorphisms of finite order and compatible Poisson brackets}
\label{sect:vartheta-&-compatible}

\noindent
In this section, we associate a pencil of compatible Poisson brackets to any automorphism of finite order
of a Lie algebra $\q$, describe the limit algebras $\q_{(0)}$ and $\q_{(\infty)}$, and construct the related 
Poisson-commutative subalgebra of $\gS(\q)$.

\subsection{Periodic gradings of Lie algebras}      
\label{subs:periodic}
Let $\vartheta\in\Aut(\q)$ be a Lie algebra automorphism of finite order $m\ge 2$ and $\zeta=\sqrt[m]1$ 
a primitive root of unity. Write also $\oth$ for the order of $\vartheta$. 
If $\q_i$ is the $\zeta^i$-eigenspace of $\vartheta$, $i\in \BZ_m$, then the direct sum
$\q=\bigoplus_{i\in \BZ_m}\q_i$ is a {\it periodic grading\/} or $\BZ_m$-{\it grading\/} of 
$\q$. The latter means that $[\q_i,\q_j]\subset \q_{i+j}$ for all $i,j\in \BZ_m$. Here $\q_0=\q^\vartheta$ 
is the fixed-point subalgebra for $\vartheta$ and each $\q_i$ is a $\q_0$-module. We will be primarily 
interested in periodic gradings of {\bf semisimple} 
Lie algebras, but such a general setting is going to be useful, too.

We choose $\{0,1,\dots, m-1\}\subset\BZ$ as a fixed set of representatives for $\BZ_m=\BZ/m\BZ$. 
Under this convention, we have
$\q=\q_0\oplus\q_1\oplus\ldots\oplus\q_{m-1}$ and
\beq   \label{eq:Z_m}
[\q_i,\q_j]\subset \begin{cases}  \q_{i+j}, &\text{ if } \ i+j\le m{-}1, \\
 \q_{i+j-m}, &\text{ if } \ i+j\ge m. \end{cases}
\eeq
This is needed below, when we consider $\BZ$-graded contractions of $\q$ associated with $\vartheta$. 

The presence of $\vartheta$ allows us to split the Lie--Poisson bracket on $\q^*$ into a sum of two 
compatible linear Poisson brackets, as follows.
Consider the polynomial representation $\vp\!:\bbk^\star\to {\rm GL}(\q)$ 
such that $\vp_s(x)=s^j x$ for $x\in \q_j$.
As in Section~\ref{subs:contr-&-inv}, this defines a family of linear Poisson brackets in $\gS(\q)$ 
parametrised by $s\in\bbk^\star$, see~\eqref{eq:fi_s}.

\begin{prop}         \label{prop:compat}   
For any $\vartheta\in\Aut(\q)$ of finite order $m$ and $\vp$ as above, we have
\begin{itemize}
\item[\sf (i)] \ The map $\vp_s^{-1}:\q\to \q$ provides an isomorphism between $(\q, [\,\,,\,])$ and 
$(\q, [\,\,,\,]_{(s)})$. In particular, the Poisson brackets $\{\ ,\ \}_{(s)}$, $s\in\bbk^\star$, are isomorphic;
\item[\sf (ii)] \ there is a limit\/ $\lim_{s\to 0}  \{x,y\}_{(s)}=:\{x,y\}_{0}$, which is a linear 
Poisson bracket in $\gS(\q)$;
\item[\sf (iii)] \ the difference $\{\,\,,\,\}-\{\,\,,\,\}_{0}=:\{\,\,,\,\}_{\infty}$ is a linear Poisson bracket on 
$\gS(\q)$ and \\  \hspace*{2ex}
%\centerline{
$\{\,\,,\,\}_{(s)}=\{\,\,,\,\}_{0}+s^m\{\,\,,\,\}_{\infty}$ \ for any $s\in\bbk^\star$.
\item[\sf (iv)] \ The Poisson bracket $\{\,\,,\,\}_{\infty}$ is obtained as the zero limit w.r.t. the polynomial  
representation $\psi\!:\bbk^\star \to {\rm GL}(\q)$ such that $\psi_s=s^m{\cdot}\vp_{s^{-1}}=
s^m{\cdot}\vp_s^{-1}$, $s\in\bbk^\star$. In other words, we have
$\{\,\,,\,\}_{\infty}=\lim_{s\to 0}\{\,\,,\,\}_{(\psi,s)}$.
\end{itemize}
\end{prop}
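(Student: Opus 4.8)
The statement unpacks the single formula $\{\,\,,\,\}_{(s)}=\{\,\,,\,\}_{0}+s^m\{\,\,,\,\}_{\infty}$ into four claims, and the plan is to establish that formula first and read off (i)--(iv) from it. I would start from the explicit formula $[x,y]_{(s)}=\vp_s^{-1}[\vp_s(x),\vp_s(y)]$ and compute it on a pair of homogeneous elements $x\in\q_i$, $y\in\q_j$ with $0\le i,j\le m-1$. Writing $[x,y]=u+w$ where $u\in\q_{i+j}$ and $w\in\q_{i+j-m}$ according to \eqref{eq:Z_m} (one of the two being zero unless $i+j\ge m$), one gets $\vp_s([x,y])=s^{i+j}u+s^{i+j-m}w$, hence
\[
[x,y]_{(s)}=\vp_s^{-1}\big(s^{i+j}u+s^{i+j-m}w\big)=s^{i+j-i-j}u+s^{i+j-m-(i+j-m)}w=u+w=[x,y].
\]
Wait — that collapses everything, so one must keep track of degrees correctly: $\vp_s^{-1}$ acts on $\q_{i+j}$ by $s^{-(i+j)}$ \emph{only when} $i+j\le m-1$, but when $i+j\ge m$ the component $u$ lies in $\q_{i+j-m}$, so $\vp_s^{-1}$ scales it by $s^{-(i+j-m)}=s^{m-i-j}$. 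Thus the correct computation is: the ``low'' part $u\in\q_{i+j}$ contributes $s^{i+j}\cdot s^{-(i+j)}u=u$ (present when $i+j\le m-1$), while the ``wrapped'' part $w\in\q_{i+j-m}$ contributes $s^{i+j}\cdot s^{-(i+j-m)}w=s^m w$ (present when $i+j\ge m$). Therefore on homogeneous elements
\[
[x,y]_{(s)}=\begin{cases}[x,y], & i+j\le m-1,\\ s^m[x,y], & i+j\ge m.\end{cases}
\]
Defining $[\,\,,\,]_0$ by keeping only the $i+j\le m-1$ part of the bracket (i.e.\ $[x,y]_0=[x,y]$ if $i+j\le m-1$, and $0$ if $i+j\ge m$) and $[\,\,,\,]_\infty$ by keeping only the wrapped part ($[x,y]_\infty=[x,y]$ if $i+j\ge m$, and $0$ otherwise), extended bilinearly, gives immediately $[\,\,,\,]_{(s)}=[\,\,,\,]_0+s^m[\,\,,\,]_\infty$ on all of $\q$, which on $\gS^1(\q)=\q$ is exactly (iii), and also $[\,\,,\,]=[\,\,,\,]_{(1)}=[\,\,,\,]_0+[\,\,,\,]_\infty$.

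With the key formula in hand the four items are short. For (i): $\vp_s^{-1}$ intertwines the brackets by the very definition \eqref{eq:fi_s}, and it is invertible since $\vp$ is a representation into $\GL(\q)$; extending $\vp_s^{-1}$ multiplicatively to $\gS(\q)$ gives the Poisson isomorphism. For (ii): from $[\,\,,\,]_{(s)}=[\,\,,\,]_0+s^m[\,\,,\,]_\infty$ the limit $s\to 0$ exists and equals $[\,\,,\,]_0$; that it is a Lie bracket (equivalently, that $\{\,\,,\,\}_0$ is Poisson, i.e.\ satisfies Jacobi) follows because a limit of Lie brackets is a Lie bracket — concretely, Jacobi for $[\,\,,\,]_{(s)}$ holds for every $s\in\bbk^\star$ and is polynomial in $s$, so it persists at $s=0$. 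For (iii): $\{\,\,,\,\}_\infty=\{\,\,,\,\}-\{\,\,,\,\}_0$ is then Poisson, being $\lim_{s\to 0}s^{-m}(\{\,\,,\,\}_{(s)}-\{\,\,,\,\}_0)$, or alternatively one invokes (iv). For (iv): compute $\psi_s^{-1}[\psi_s(x),\psi_s(y)]$ with $\psi_s=s^m\vp_s^{-1}$; on $x\in\q_i$ one has $\psi_s(x)=s^{m-i}x$, and the same bookkeeping as above — now with the roles of ``wrapped'' and ``unwrapped'' exchanged because the exponents are $m-i$ rather than $i$ — yields $[x,y]_{(\psi,s)}=[x,y]_\infty+s^m[x,y]_0$, whose $s\to 0$ limit is $[\,\,,\,]_\infty$; this simultaneously reproves that $\{\,\,,\,\}_\infty$ is Poisson.

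The only genuinely delicate point is the degree bookkeeping in the first computation: one must correctly account for the fact that, under the chosen representatives $\{0,\dots,m-1\}$, the $\vp$-weight of $\q_k$ is the representative of $k$ in this range, so a bracket $[\q_i,\q_j]$ landing in $\q_{i+j}$ versus $\q_{i+j-m}$ changes the exponent of $s$ by exactly $m$ — this is where the factor $s^m$ (rather than some other power) comes from, and it is the reason the pencil is parametrised by $s^m$ and has the specific pair of singular lines mentioned in the introduction. Everything else is the standard principle that Lie and Poisson axioms, being polynomial identities, pass to limits of polynomial families; I would state that principle once and apply it in (ii) and (iv) rather than re-verifying Jacobi by hand. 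I expect no serious obstacle beyond writing the case split cleanly.
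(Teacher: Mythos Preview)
Your proposal is correct and follows essentially the same route as the paper: compute $[x,y]_{(s)}$ on homogeneous $x\in\q_i$, $y\in\q_j$ to get the case split $[x,y]$ versus $s^m[x,y]$, read off $\{\,\,,\,\}_0$ and $\{\,\,,\,\}_\infty$ as the two pieces, and handle (iv) by the symmetric computation with $\psi_s(x)=s^{m-i}x$. The only cosmetic difference is that the paper dispenses with the $u+w$ notation entirely (since exactly one of the two is always zero, there is no need for it) and simply writes the case split directly; your self-corrected version arrives at the same formulas.
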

\begin{proof}  
{\sf (i)} 
This readily follows from Eq.~\eqref{eq:fi_s}. 

{\sf (ii)} \ If $x\in\q_i$ and $y\in\q_j$, then
$ \{x,y\}_{(s)}=\begin{cases} [x,y], & \text{ if }\  i+j<m; \\
 s^m [x,y],  & \text{ if }\  i+j\ge m.
\end{cases}$ 
\ \
Therefore, the limit of $\{x,y\}_{(s)}$ as $s$ tends to zero exists and is given by
$\{x,y\}_{0}:=\begin{cases} [x,y], & \text{ if }\  i+j<m \\
 0,  & \text{ if }\  i+j\ge m
\end{cases}$~. 
The limit of Poisson brackets is again a Poisson bracket, hence the Jacobi identity is satisfied for
$\{\,\,,\,\}_{0}$, cf.~Section~\ref{subs:contr-&-inv}. However, this is easily verified directly.

{\sf (iii)} \ By the above formula for $\{x,y\}_{0}$, we have
$\{x,y\}_{\infty}=\begin{cases} 0, & \text{ if }\  i+j<m; \\
[x,y],  & \text{ if }\  i+j\ge m.  \end{cases}$ \  
\\ Therefore $\{x,y\}_{(s)}=\{x,y\}_{0}+s^m\{x,y\}_{\infty}$ for all $x,y\in\q$.
It is also easily verified that $\{\,\,,\,\}_{\infty}$ satisfies the Jacobi identity.

{\sf (iv)} We have $\psi_s(x)=s^{m-i}x$ for $x\in\q_i$. Then an easy calculation shows that the $(\psi,s)$-bracket is given by 
$ \{x,y\}_{(\psi,s)}=\begin{cases} s^m [x,y], & \text{ if }\  i+j<m; \\
 [x,y],  & \text{ if }\  i+j\ge m.
\end{cases}$
\end{proof}

Two Poisson brackets on the algebra $\gS(\q)$ are said to be {\it compatible}, if any linear 
combination of them is again a Poisson bracket. Actually, if $\{\,\,,\,\}_{1}$ and $\{\,\,,\,\}_{2}$ are Poisson 
brackets, then it suffices to check that just $\{\,\,,\,\}_{1}+\{\,\,,\,\}_{2}$ is a Poisson 
bracket~\cite[Lemma\,1.1]{OY}. Anyway, we have

\begin{cl}   \label{cor:compatibel} 
For any $\vartheta\in \Aut(\q)$ of finite order,
the Poisson brackets $\{\,\,,\,\}_{0}$ and $\{\,\,,\,\}_{\infty}$ are compatible, and the corresponding pencil
contains the initial Lie--Poisson bracket.
\end{cl}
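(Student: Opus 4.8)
The plan is to deduce Corollary~\ref{cor:compatibel} directly from Proposition~\ref{prop:compat} together with the cited observation \cite[Lemma\,1.1]{OY} that for a pair of Poisson brackets it suffices to verify that their sum is again a Poisson bracket in order to conclude that the whole pencil consists of Poisson brackets. So the real content to check is: (a) $\{\,\,,\,\}_{0}$ and $\{\,\,,\,\}_{\infty}$ are each Poisson brackets; (b) $\{\,\,,\,\}_{0}+\{\,\,,\,\}_{\infty}$ is a Poisson bracket; and (c) the initial Lie--Poisson bracket $\{\,\,,\,\}$ lies in the pencil $a\{\,\,,\,\}_{0}+b\{\,\,,\,\}_{\infty}$.

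For (a), parts (ii) and (iii) of Proposition~\ref{prop:compat} already assert that $\{\,\,,\,\}_{0}$ and $\{\,\,,\,\}_{\infty}$ are linear Poisson brackets (for $\{\,\,,\,\}_0$ this follows because a limit of Poisson brackets is a Poisson bracket, or by the direct check mentioned there; for $\{\,\,,\,\}_\infty$ it follows from part (iv), where it is realised as a zero limit of the brackets $\{\,\,,\,\}_{(\psi,s)}$, hence again a limit of Poisson brackets, or again by a direct Jacobi-identity check). For (b), by Proposition~\ref{prop:compat}(iii) we have $\{\,\,,\,\}_{0}+\{\,\,,\,\}_{\infty}=\{\,\,,\,\}_{(s)}$ evaluated at any $s$ with $s^m=1$, e.g.\ $s=1$; in particular $\{\,\,,\,\}_{0}+\{\,\,,\,\}_{\infty}=\{\,\,,\,\}_{(1)}=\{\,\,,\,\}$, which is the original Lie--Poisson bracket and hence certainly a Poisson bracket. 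Invoking \cite[Lemma\,1.1]{OY} then gives that $a\{\,\,,\,\}_{0}+b\{\,\,,\,\}_{\infty}$ is a Poisson bracket for all $a,b\in\bbk$, i.e.\ the two brackets are compatible.

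For (c), the identity $\{\,\,,\,\}=\{\,\,,\,\}_{0}+\{\,\,,\,\}_{\infty}$ just noted (the case $a=b=1$) exhibits the initial Lie--Poisson bracket as a member of the pencil, which is the last assertion. More generally, taking $s\in\bbk^\star$ arbitrary, Proposition~\ref{prop:compat}(iii) gives $\{\,\,,\,\}_{(s)}=\{\,\,,\,\}_{0}+s^m\{\,\,,\,\}_{\infty}$, so every bracket $\{\,\,,\,\}_{(s)}$ (each of them isomorphic to $\{\,\,,\,\}$ by part (i)) sits in the pencil, which one may remark in passing.

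There is essentially no obstacle here: the corollary is a formal consequence of the proposition and of the reduction lemma from \cite{OY}. The only point requiring a word is that one should not separately re-prove the Jacobi identity for $\{\,\,,\,\}_{0}$ and $\{\,\,,\,\}_{\infty}$, since parts (ii)--(iv) of Proposition~\ref{prop:compat} already supply this; the proof therefore amounts to assembling $\{\,\,,\,\}=\{\,\,,\,\}_{0}+\{\,\,,\,\}_{\infty}$ and quoting \cite[Lemma\,1.1]{OY}. If one prefers not to rely on that lemma, the alternative is the (equally short) direct route: expand the Jacobiator of $a\{\,\,,\,\}_{0}+b\{\,\,,\,\}_{\infty}$ as $a^2[\text{Jac of }\{\,\,,\,\}_{0}]+b^2[\text{Jac of }\{\,\,,\,\}_{\infty}]+ab[\text{mixed term}]$, note the first two vanish by (a), and identify the mixed term with (the vanishing) Jacobiator of $\{\,\,,\,\}_{0}+\{\,\,,\,\}_{\infty}=\{\,\,,\,\}$ minus the two pure terms — again giving zero.
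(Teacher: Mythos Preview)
Your proposal is correct and follows exactly the approach the paper intends: the corollary is stated without proof immediately after the remark that, by \cite[Lemma\,1.1]{OY}, it suffices to check that the sum $\{\,\,,\,\}_{0}+\{\,\,,\,\}_{\infty}$ is a Poisson bracket, and Proposition~\ref{prop:compat}(iii) gives $\{\,\,,\,\}_{0}+\{\,\,,\,\}_{\infty}=\{\,\,,\,\}$. Your write-up simply makes this explicit (and adds the optional direct Jacobiator expansion as a redundant alternative).
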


All Poisson brackets involved in Proposition~\ref{prop:compat} are linear. Therefore, in place of 
Poisson brackets on the symmetric algebra $\gS(\q)$, we can stick to the corresponding Lie algebra 
structures on the vector space $\q$. Let $\q_{(s)}$ be the Lie algebra corresponding to  
$\{\,\,,\ \}_{(s)}$. Then all algebras with $s\in\bbk^\star$ are isomorphic, whereas the brackets
$\{\,\,,\,\}_{0}$ and $\{\,\,,\,\}_{\infty}$ give rise to entirely different Lie algebras $\q_{(0)}$ and 
$\q_{(\infty)}$, respectively. Both $\q_{(0)}$ and $\q_{(\infty)}$ are Lie algebra contractions of $\q$ in the 
sense of \cite[Ch.\,7, \S\,2]{t41}. Therefore 
$\ind\q_{(0)}\ge \ind\q$ and $\ind\q_{(\infty)}\ge \ind\q$ (the semi-continuity of the index). 

\begin{prop}   \label{prop:graded-limits}
The Lie algebras $\q_{(0)}$ and $\q_{(\infty)}$ are $\BN$-graded. More precisely, if\/ $\rr[i]$ stands for the component of grade $i\in\BN$ in an $\BN$-graded Lie algebra $\rr$, then 
\[
    \q_{(0)}[i]=\begin{cases} \q_i \  & \text{ for } i=0,1,\dots,m{-}1 \\
       0 & \text{ otherwise}   \end{cases}, \ 
    \q_{(\infty)}[i]=\begin{cases} \q_{m-i} \  & \text{ for } i=1,2,\dots,m  \\
      0 & \text{ otherwise}   \end{cases}.
\]
In particular, $\q_{(\infty)}$ is nilpotent and the subspace $\q_0$, which is the 
highest grade component of $\q_{(\infty)}$, belongs to the centre of $\q_{(\infty)}$.
\end{prop}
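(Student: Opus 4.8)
The plan is to unwind the definitions of $\q_{(0)}$ and $\q_{(\infty)}$ in terms of the grading $\q=\q_0\oplus\dots\oplus\q_{m-1}$ and the bracket formulas already computed in Proposition~\ref{prop:compat}(ii)--(iii), and then simply check that those formulas are compatible with the asserted $\BN$-gradings. For $\q_{(0)}$, recall that on homogeneous elements $x\in\q_i$, $y\in\q_j$ (with $0\le i,j\le m-1$) one has $\{x,y\}_0=[x,y]$ if $i+j<m$ and $\{x,y\}_0=0$ if $i+j\ge m$. So if we declare $\q_{(0)}[i]=\q_i$ for $0\le i\le m-1$ (and $0$ otherwise), then the bracket of a grade-$i$ element with a grade-$j$ element lands in $\q_{i+j}=\q_{(0)}[i+j]$ when $i+j\le m-1$, and is zero (hence trivially in grade $i+j$) when $i+j\ge m$. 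Thus $[\q_{(0)}[i],\q_{(0)}[j]]\subseteq \q_{(0)}[i+j]$, which is exactly the $\BN$-grading claim — note the two cases of the bracket formula dovetail perfectly with the index conventions chosen in~\eqref{eq:Z_m}.

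For $\q_{(\infty)}$ the argument is the mirror image. Here $\{x,y\}_\infty=0$ if $i+j<m$ and $\{x,y\}_\infty=[x,y]\in\q_{i+j-m}$ if $i+j\ge m$, for $x\in\q_i$, $y\in\q_j$. Reindex by setting $\q_{(\infty)}[k]=\q_{m-k}$ for $1\le k\le m$ (and $0$ otherwise); equivalently $\q_i$ sits in grade $m-i$. Then for $x$ of grade $m-i$ and $y$ of grade $m-j$, the bracket vanishes unless $i+j\ge m$, in which case it lies in $\q_{i+j-m}$, which carries grade $m-(i+j-m)=2m-i-j=(m-i)+(m-j)$. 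So again $[\q_{(\infty)}[k],\q_{(\infty)}[l]]\subseteq\q_{(\infty)}[k+l]$, and when the bracket is zero the inclusion is vacuous. Hence $\q_{(\infty)}$ is $\BN$-graded with the stated components; one small point to spell out is that the grading is concentrated in degrees $1,\dots,m$ rather than starting at $0$, which matches the fact that $\q_0$ (the fixed-point algebra) moves to the top grade $m$.

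Finally, nilpotency of $\q_{(\infty)}$ is immediate from the grading: an iterated bracket of $r$ homogeneous elements has grade $\ge r$ (each factor has grade $\ge 1$), but all grades exceed $m$ vanish, so any bracket of more than $m$ elements is zero; thus $\q_{(\infty)}$ is nilpotent of class $\le m$. That $\q_0$ lies in the centre of $\q_{(\infty)}$ follows since $\q_0=\q_{(\infty)}[m]$ is the top nonzero grade: for any $y\in\q_{(\infty)}[l]$ with $l\ge 1$, the bracket $[\q_0,y]$ would have grade $m+l>m$, hence is zero. (Concretely, this recovers the original statement $[\q_0,\q_j]\subseteq\q_j$ for $j\ge 1$ being replaced in the contraction by $\{\q_0,\q_j\}_\infty=0$, since $0+j<m$.)

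There is essentially no obstacle here: the statement is a direct bookkeeping consequence of Proposition~\ref{prop:compat} and the index conventions in~\eqref{eq:Z_m}. The only thing requiring the slightest care is getting the reindexing $i\mapsto m-i$ for $\q_{(\infty)}$ right and confirming that the two branches of each bracket formula (the ``$<m$'' and ``$\ge m$'' cases) are exactly what is needed for the respective $\BN$-grading to be well defined — i.e.\ that one branch gives the homogeneity and the other gives vanishing, never a ``wrong grade'' term. I would present this as a short verification rather than a computation.
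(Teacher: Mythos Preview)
Your proof is correct and follows exactly the same approach as the paper, which simply says ``Use the formulae for $\{x,y\}_{0}$ and $\{x,y\}_{\infty}$ from the proof of Proposition~\ref{prop:compat}.'' You have spelled out in full the bookkeeping that the paper leaves to the reader, including the reindexing $i\mapsto m-i$ for $\q_{(\infty)}$ and the nilpotency and centre claims; this is more detailed than the paper but not different in substance.
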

\begin{proof}
Use the formulae for $\{x,y\}_{0}$ and $\{x,y\}_{\infty}$ from the proof of Proposition~\ref{prop:compat}. 
\end{proof}

\subsection{Poisson-commutative subalgebras related to compatible Poisson brackets}
\label{subs:construct-2}
There is a general method for constructing a Poisson-commutative subalgebra of $\gS(\q)$ with
``large" transcendence degree that exploits compatible Poisson brackets and the centre of $\gS(\q)$, see 
e.g.~\cite[Sect.~1.8.3]{duzu}, \cite[Sect.~10]{GZ}, \cite[Sect.~2]{OY}.  We recall this method in our present setting. 

By Proposition~\ref{prop:compat}, we have
\[
   \{\,\,,\,\}  = \{\,\,,\,\}_{0}+ \{\,\,,\,\}_{\infty} \ \text{ and } \  \{\,\,,\,\}_{(s)}  = \{\,\,,\,\}_{0}+ s^m\{\,\,,\,\}_{\infty}.
\]
Since $\{\,\,,\,\}_{(s)}=\{\,\,,\,\}_{(s')}$ if $s^m=(s')^m$, it is convenient to replace $s^m$ with $t$ and
set
\[
   \{\,\,,\,\}_{t}  = \{\,\,,\,\}_{0}+ t\{\,\,,\,\}_{\infty} , 
\]
where $t\in \BP:=\bbk\cup\{\infty\}$ and the value $t=\infty$ corresponds to the bracket
$\{\,\,,\,\}_{\infty}$. But, we will use the parameter $s\in\bbk^\star$, when the multiplicative group
$\vp: \bbk^\star\to \GL(\q)$ is needed. \\ \indent
Let $\q_{(t)}$ stand for the Lie algebra corresponding to $\{\,\,,\,\}_{t}$.
All these Lie algebras have the same underlying vector space. Since the algebras $\q_{(t)}$ 
with $t\in\bbk^\star$ are isomorphic, they have one and the same index. We say that $t\in\BP$ is 
{\it regular} if $\ind\q_{(t)}=\ind\q$ and write $\BP_{\sf reg}$ for the set of regular values.
Then $\BP_{\sf sing}:=\BP\setminus \BP_{\sf reg}\subset \{0,\infty\}$  is the set of singular values.

Let $\cz_t$ be the centre of the Poisson algebra $(\gS(\q),\{\,\,,\,\}_t)$. In particular, 
$\cz_1=\gS(\q)^{\q}$. If $t\in\bbk^\star$, then $\cz_t=\vp_s^{-1}(\cz_1)$, where  $s^m=t$. 
By Eq.~\eqref{eq:neravenstvo-ind}, we have $\trdeg\cz_t\le\ind\q_{(t)}$. Our main object is the 
subalgebra $\gZ\subset\gS(\q)$ generated by the centres $\cz_t$ with $t\in\BP_{\sf reg}$, i.e.,
\[
     \gZ=\gZ(\gt q,\vth)=\mathsf{alg}\langle\cz_t \mid t\in\BP_{\sf reg}\rangle .
\]
By a general property of compatible brackets, the algebra $\gZ$ is Poisson-commutative w.r.t. {\bf all} 
brackets $\{\,\,,\,\}_t$ with $t\in\BP$, cf. \cite[Sect.~2]{OY}. Note that the Lie subalgebra 
$\q_0\subset\q=\q_{(1)}$ is also the {\bf same} Lie subalgebra in any $\q_{(t)}$ with 
$t\ne\infty$ (cf. Proposition~\ref{prop:graded-limits} for $\q_{(0)}$). Therefore, 
\begin{equation}         \label{incl}
      \cz_t\subset\gS(\q)^{\q_0} \ \text{ for } \,t\ne\infty.
\end{equation}
In general, one cannot say much about $\gZ\subset \gS(\q)$. To arrive at more definite conclusions on 
$\gZ$, a lot of extra information on $\gS(\q)^\q$, $\q_{(0)}$, and $\q_{(\infty)}$ is required. In particular, 
one has to know whether $0$ and/or $\infty$ belong to $\BP_{\sf reg}$.
And this is the reason, why we have to stick to semisimple Lie algebras.

%%%%%%%%%%  Section 3   %%%%%%%%%%%%
\section{Poisson-commutative subalgebras of $\gS(\g)$: the semisimple case}
\label{sect:3} 

\noindent
From now on, $G$ is a connected semisimple algebraic group and $\g=\Lie G$.
We consider $\vth\in\Aut(\g)$ of order $m\ge 2$ and 
freely use the previous notation and results, with $\q$ being replaced by $\g$. In particular,
\[
  \g=\g_0\oplus\g_1\oplus\ldots\oplus \g_{m-1} ,
\] 
where $\{0,1,{\dots},m-1\}$ is the fixed set of representatives for $\BZ_m$, and $G_0$ is the connected 
subgroup of $G$ with $\Lie(G_0)=\g_0$. Then $\g_{(t)}$ is a family of Lie algebras parameterised by 
$t\in \BP=\bbk\cup\{\infty\}$, where the algebras $\g_{(t)}$ with $t\in\bbk^\star$ 
are isomorphic to $\g=\g_{(1)}$, while $\g_{(0)}$ and $\g_{(\infty)}$ are different $\BN$-graded 
contractions of $\g$. Next, $\cz_t$ is the Poisson centre of $(\gS(\g),\{\,\,,\,\}_t)$ and the construction of 
Section~\ref{subs:construct-2} provides a Poisson-commutative subalgebra 
$\gZ=\mathsf{alg}\langle\cz_t \mid t\in\BP_{\sf reg}\rangle\subset\gS(\g)$. The connected algebraic
group corresponding to $\g_{(t)}$ is denoted by $G_{(t)}$. 

Our goal is to demonstrate that there are many interesting cases, in which $\gZ$ is a polynomial algebra 
having the maximal possible transcendence degree. 
Let us recall standard invariant-theoretic properties of semisimple Lie algebras.

The Poisson centre $\gS(\g)^\g=\gS(\g)^G$ is a polynomial algebra of Krull dimension $l=\rk\g$ and 
$\ind\g=l$. Hence one has now the equality in Eq.~\eqref{eq:neravenstvo-ind}. Note also that $\g_0$ is
a reductive Lie algebra. Write $\cN$ for the cone of nilpotent elements of $\g$.
Let $\varkappa$ be the Killing form on $\g$. We identify $\g$ and $\g_0$ with 
their duals via $\varkappa$. Moreover, since $\varkappa(\g_i,\g_{j})=0$ if $i+j\not\in\{0,m\}$, the dual space of 
$\g_j$, $\g_j^*$, can be identified with $\g_{m-j}$.
Here $\g^*_{\sf reg}=\g_{\sf reg}=\{x\in\g\mid \dim\g^x=l\}$. 
By~\cite{ko63}, $\g$ has the {\sl codim}--$3$ property, i.e., $\mathrm{codim}_\g(\g\setminus \g_{\sf reg})=3$.
Recall also that $\cN\cap\g_{\sf reg}$ is non-empty, and it is a sole $G$-orbit, the {\it regular nilpotent orbit}.

{\bf Convention.} 
We think of $\g^*$ as the dual space for any Lie algebra $\g_{(t)}$
and sometimes omit the subscript `$(t)$' in $\g_{(t)}^*$. However, if $\xi\in\g^*$,
then the stabiliser of $\xi$ with respect to the coadjoint representation
of $\g_{(t)}$ is denoted by $\g_{(t)}^\xi$.
 
Each Lie algebra $\g_{(t)}$ has its own singular set 
$\g^*_{(t),\sf sing}=\g^*\setminus \g^*_{(t),\sf reg}$, which is regarded as a subset of $\g^*$.
If $\pi_t$ is the Poisson tensor of the bracket $\{\,\,,\,\}_t$, then 
\[
        \g^*_{(t),\sf sing}=\{\xi\in\g^* \mid \rk \pi_t(\xi)< \rk \pi_t\} ,
\]
which is the union of the coadjoint $G_{(t)}$-orbits in $\g^*$ having a non-maximal dimension.
For simplicity, we write $\g^*_{\infty,\sf sing}$ or $\g^*_{\infty,\sf reg}$ in place of 
$\g^*_{(\infty),\sf sing}$ or $\g^*_{(\infty),\sf reg}$.

\begin{prop}   \label{prop:codim-sing}  
The closure of\/ $\bigcup_{t\ne 0,\infty} \g^*_{(t),\sf sing}$ in $\g^*$ is a subset of codimension at 
least $2$.
\end{prop}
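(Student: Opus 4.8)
The plan is to realise $\bigcup_{t\ne 0,\infty}\g^*_{(t),\sf sing}$ as the image under a linear projection of an explicit ``incidence variety'' in $\bbk^\star\times\g^*$ and to bound its dimension by a one‑parameter count. The crucial input is Kostant's codim--$3$ property of $\g$~\cite{ko63}: since every $\g_{(t)}$ with $t\in\bbk^\star$ is isomorphic to $\g$ (Proposition~\ref{prop:compat}(i)), one has $\rk\pi_t=\dim\g-\ind\g$ and $\g^*_{(t),\sf sing}$ is a nonempty closed subset of $\g^*$ of dimension $\dim\g-3$ for all such $t$.

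First I set up the incidence variety. Fix a basis $e_1,\dots,e_N$ of $\g$, where $N=\dim\g$. For $(t,\xi)\in\bbk\times\g^*$ the Poisson tensor $\pi_t(\xi)$ has matrix $\bigl(\xi([e_i,e_j]_t)\bigr)_{i,j}$, whose entries are polynomials in $(t,\xi)$ because $[e_i,e_j]_t=[e_i,e_j]_0+t[e_i,e_j]_\infty$ is affine‑linear in $t$. Put
\[
  \mathcal X=\bigl\{(t,\xi)\in\bbk^\star\times\g^*\ \bigm|\ \rk\pi_t(\xi)<N-\ind\g\bigr\}.
\]
This is the common vanishing locus of the $(N-\ind\g)\times(N-\ind\g)$ minors of $\pi_t(\xi)$, hence a Zariski‑closed subvariety of $\bbk^\star\times\g^*$, and its fibre over any $t\in\bbk^\star$ under the first projection $p_1$ is $\{t\}\times\g^*_{(t),\sf sing}$, of dimension $N-3$.

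Next I estimate $\dim\mathcal X$. Let $\mathcal X'$ be an irreducible component of $\mathcal X$, so that $\overline{p_1(\mathcal X')}$ is an irreducible subvariety of $\bbk^\star$ and the generic fibre of the dominant morphism $\mathcal X'\to\overline{p_1(\mathcal X')}$ has dimension $\dim\mathcal X'-\dim\overline{p_1(\mathcal X')}$. That fibre lies inside some $\{t\}\times\g^*_{(t),\sf sing}$, hence has dimension at most $N-3$; combined with $\dim\overline{p_1(\mathcal X')}\le\dim\bbk^\star=1$ this yields $\dim\mathcal X'\le N-2$. Therefore $\dim\mathcal X\le N-2$.

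Finally, $\bigcup_{t\ne 0,\infty}\g^*_{(t),\sf sing}$ is exactly the image of $\mathcal X$ under the second projection $\bbk^\star\times\g^*\to\g^*$, hence a constructible subset of $\g^*$ of dimension at most $\dim\mathcal X\le N-2$, and its closure has the same dimension. Thus this closure has codimension at least $2$ in $\g^*$, as claimed. The whole argument is a dimension count; the only points requiring care are (a) that one genuinely needs \emph{codim--$3$} rather than codim--$2$ for each $\g^*_{(t),\sf sing}$, since a codim--$2$ estimate would be absorbed by the one‑parameter family and give nothing useful, and (b) that the fibre‑dimension step must be carried out componentwise, as $\mathcal X$ need not be irreducible.
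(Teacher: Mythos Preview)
Your proof is correct. Both your argument and the paper's rest on the same core observation: for every $t\in\bbk^\star$ the Lie algebra $\g_{(t)}$ is isomorphic to $\g$, so Kostant's codim--$3$ property gives $\dim\g^*_{(t),\sf sing}=\dim\g-3$, and a one-parameter family of codim--$3$ sets has closure of codimension at least~$2$.

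The routes diverge in how this last step is executed. The paper exploits the explicit isomorphism: using the dual action $\vp_s^*$ on $\g^*$, it identifies the union $\bigcup_{t\ne 0,\infty}\g^*_{(t),\sf sing}$ with the $\bbk^\star$-sweep $\bigcup_{s\in\bbk^\star}\vp_s^*(\g^*_{\sf sing})$ of the single subvariety $\g^*_{\sf sing}$, from which the dimension bound is immediate. Your incidence-variety argument is more abstract and more portable: it never names the isomorphism and would go through verbatim for any one-parameter family of Lie brackets on a fixed vector space whose members all enjoy the codim--$3$ property. The trade-off is that the paper's version yields an explicit description of the union (useful if one later wants to analyse its closure more finely), whereas yours gives only the dimension estimate---which is, however, all that the statement asks for.
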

\begin{proof}
Let $\xi=\xi_0+\xi_1+\dots +\xi_{m-1}\in\g^*$, where $\xi_i\in\g_i^*$. Using Proposition~\ref{prop:compat}(i) and the dual representation $\vp^*:\bbk^\star\to \GL(\g^*)$, $s\mapsto \vp^*_s$, one readily verifies 
that $\xi\in \g^*_{\sf sing}=\g^*_{(1),\sf sing}$ if and only if $\vp^*_{s}(\xi)\in \g^*_{(s),\sf sing}$. Therefore, 
\[
   \bigcup_{t\ne 0,\infty} \g^*_{(t),\sf sing} =\bigcup_{s\in\bbk^\star} \vp^*_s(\g^*_{\sf sing})= 
   \{\xi_0+s^{-1} \xi_1 +\ldots + s^{1-m}\xi_{m-1} \mid \xi\in\g^*_{\sf sing}, \, s\in\bbk^\star\} .
\]
Since $\codim \g^*_{\sf sing}=3$, the closure of\/ $\bigcup_{t\ne 0,\infty} \g^*_{(t),\sf sing}$ is a subset 
of $\g^*$ of codimension at least $2$.
\end{proof}

In order to compute $\trdeg\gZ$, we have to elaborate on some relevant properties of
the limit Lie algebras $\g_{(\infty)}$ and $\g_{(0)}$. 

\subsection{Properties of $\g_{(\infty)}$}   
\label{subs:g-infty}
By Proposition~\ref{prop:graded-limits}, $\g_{(\infty)}$ is a nilpotent  $\BN$-graded Lie 
algebra, hence $G_{(\infty)}$ is a unipotent algebraic group. 
Recall also that the subspace $\g_0$ belongs to the centre of $\g_{(\infty)}$. 
\begin{thm}            \label{thm:ind-inf}
For any $\vartheta\in\Aut(\g)$ of finite order, one has $\ind\g_{(\infty)}= \dim\g_0+\rk\g-\rk\g_0$.
\end{thm}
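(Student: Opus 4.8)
The goal is to compute $\ind\g_{(\infty)}$ exactly, and the natural strategy is to establish the two inequalities separately. The lower bound $\ind\g_{(\infty)}\ge \dim\g_0+\rk\g-\rk\g_0$ should come from exhibiting a point $\xi\in\g^*$ whose stabiliser $\g_{(\infty)}^\xi$ has dimension at least $\dim\g_0+\rk\g-\rk\g_0$; since $\g_0$ is central in $\g_{(\infty)}$ by Proposition~\ref{prop:graded-limits}, one has $\g_0\subseteq \g_{(\infty)}^\xi$ automatically, so the task reduces to finding $\xi$ for which the stabiliser contains, modulo $\g_0$, a subspace of dimension $\rk\g-\rk\g_0$. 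Here one should use the identification $\g_j^*\cong\g_{m-j}$ via the Killing form. I would take $\xi$ supported on $\g_1^*\cong\g_{m-1}$ (and possibly $\g_0^*$) coming from a suitably generic/regular element, so that the bracket pairing $\g_i\times\g_{m-i}\to\g_0$ built into the $\g_{(\infty)}$-structure is as nondegenerate as possible. The centraliser computation in $\g_{(\infty)}$ of such a $\xi$ should match a centraliser computation in the reductive algebra $\g_0$ acting on the graded pieces, and Vinberg's theory (a generic element of $\g_1$ being semisimple, with centraliser a Cartan-type subalgebra) will feed in the count $\rk\g-\rk\g_0$ as the dimension of the part of the stabiliser coming from $\bigoplus_{i\ge 1}\g_{m-i}$.

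For the upper bound $\ind\g_{(\infty)}\le \dim\g_0+\rk\g-\rk\g_0$, I would argue that for $\xi$ in a dense open subset of $\g^*$ the stabiliser $\g_{(\infty)}^\xi$ has dimension at most this number, equivalently that the Poisson tensor $\pi_\infty(\xi)$ has rank at least $\dim\g-\dim\g_0-(\rk\g-\rk\g_0)=\dim\g-\dim\g_0-\rk\g+\rk\g_0$. The kernel of $\pi_\infty(\xi)$ always contains $\g_0$ (central), so really one must show that on $\bigoplus_{i=1}^{m-1}\g_i$ the form $(x,y)\mapsto \xi([x,y]_\infty)$ has corank at most $\rk\g-\rk\g_0$ for generic $\xi$. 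Writing out $[\,,]_\infty$ via Proposition~\ref{prop:compat}(iii), this skew form only pairs $\g_i$ with $\g_j$ when $i+j\ge m$, and the relevant piece is the pairing into $\g_0^*$; a convenient device is to restrict $\xi$ to $\g_0^*$ only (the component $\xi_0$) and relate the corank of the resulting form to the dimension of the generic stabiliser of the isotropy representation $G_0\curvearrowright \bigoplus_i\g_i$, which by Vinberg is a torus of dimension $\rk\g-\rk\g_0$ (the ``Cartan subspace'' philosophy: a generic element of $\g_1$ is $\g$-regular semisimple is too strong in general, but the generic stabiliser in $\g_0$ of a generic collection has the stated dimension). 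Semicontinuity of index then gives the bound for all $\xi$ near the generic locus, hence $\ind\g_{(\infty)}\le\dim\g_0+\rk\g-\rk\g_0$.

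An alternative, possibly cleaner route for both bounds at once: realise $\g_{(\infty)}$ as a semidirect-product-like object. Since $\g_0$ is central, $\g_{(\infty)}$ is a central extension of $\g_{(\infty)}/\g_0$, and $\bigoplus_{i=1}^{m-1}\g_i$ with the truncated bracket (keep $[\g_i,\g_j]$ only when it lands in $\g_{i+j-m}$ with $1\le i+j-m$) is itself a nilpotent graded algebra. One can try to compute $\ind$ of the central extension via the standard formula $\ind(\text{central extension by }V)=\ind(\text{quotient acting trivially})+\dim V - (\text{generic rank of the cocycle pairing})$, where the cocycle here is exactly the $\g_0$-valued bracket. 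This reorganises the whole problem into: (a) $\dim\g_0$ enters as the dimension of the central summand, and (b) $\rk\g-\rk\g_0$ is the generic corank of the $\g_0^*$-valued bracket form on $\bigoplus_{i\ge1}\g_i$, which is where Vinberg's structure theory for $\vth$-groups — specifically that the generic isotropy subalgebra $(\g_0)_v$ for $v\in\g_1$ generic has dimension $\rk\g-\rk\g_0$ (this is Vinberg's computation of the ``rank'' and ``dimension of a Cartan subspace'' in the graded setting) — does the work.

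\textbf{Main obstacle.} The real content, and the step I expect to be delicate, is pinning down that the generic corank of the $\g_0^*$-valued bracket form on $\bigoplus_{i=1}^{m-1}\g_i$ equals exactly $\rk\g-\rk\g_0$ — not just $\le$ or $\ge$. This is essentially an input from Vinberg's theory of periodic gradings (the dimension of a Cartan subspace of the $\vth$-group, and the fact that the generic stabiliser in $\g_0$ is a torus of that dimension), and one must be careful that it holds for \emph{all} finite-order $\vth$, including inner and outer, and including the degenerate cases where $\g_1=0$ or $\g_0$ is a torus. Handling those boundary cases (e.g. when $\g_0$ is abelian, where the theorem predicts $\ind\g_{(\infty)}=\rk\g$) and making sure the genericity arguments survive is where care is needed; everything else is bookkeeping with the formula from Proposition~\ref{prop:compat}(iii) and semicontinuity of the index.
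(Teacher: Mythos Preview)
Your plan has the inequalities reversed, and as a consequence neither of your two paragraphs establishes the lower bound. Since $\ind\g_{(\infty)}=\min_\xi\dim\g_{(\infty)}^\xi$, exhibiting \emph{one} point $\xi$ with $\dim\g_{(\infty)}^\xi\ge d$ tells you nothing about the minimum; it is your second paragraph (generic stabiliser $\le d$) that proves $\ind\g_{(\infty)}\le d$, and your first paragraph, as stated, proves nothing. So the bound $\ind\g_{(\infty)}\ge \dim\g_0+\rk\g-\rk\g_0$ is simply missing from your outline.

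The paper handles both directions through $\g_0^*$, not $\g_1^*$. For the \emph{upper} bound it takes $\mu\in\g_0^*$ corresponding (via $\varkappa$) to an element $h\in\g_0$ that is regular semisimple \emph{in $\g$}; such elements exist for any $\vth$ by a classical fact (Kac), with no appeal to Vinberg's $\g_1$-theory, Cartan subspaces, or generic isotropy dimensions. One checks directly from the $\{\,,\,\}_\infty$-formula that $\g_{(\infty)}^\mu=\g^\mu+\g_0$, and since $\g^\mu$ is a Cartan of $\g$ meeting $\g_0$ in a Cartan of $\g_0$, the dimension is exactly $\dim\g_0+\rk\g-\rk\g_0$. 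Your proposed route through $\g_1^*$ and ``generic stabiliser of $G_0\curvearrowright\bigoplus\g_i$ is a torus of dimension $\rk\g-\rk\g_0$'' is not a statement in Vinberg's theory (which concerns $G_0\curvearrowright\g_1$ alone), and in any case is not needed.

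For the \emph{lower} bound---the part you are actually missing---the paper does something you did not anticipate: it takes an \emph{arbitrary} $\xi=\xi_0+\xi'$ with $\xi_0\in\Omega_0^*$ (a dense condition), and then uses the unipotent group $G_{(\infty)}$ to move $\xi$ inside its coadjoint orbit to an element $\gamma$ whose components all lie in $\te_j^*$, where $\te=\g^h$. This is done grade by grade: $\exp(\ad^*_{(\infty)}\eta)$ with $\eta\in\g_{m-1}$ adjusts $\xi_1$ without touching $\xi_0$, then $\eta\in\g_{m-2}$ adjusts the next component, and so on; the point is that $\ad^*_{(\infty)}(\g_{m-j})\cdot\xi_0$ sweeps out the annihilator of $\te_j$ in $\g_j^*$ because $h$ is regular. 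Once $\gamma\in\bigoplus\te_j^*$, one sees $\te\subset\g_{(\infty)}^\gamma$, hence $\dim\g_{(\infty)}^\xi=\dim\g_{(\infty)}^\gamma\ge\dim(\te+\g_0)=\dim\g_0+\rk\g-\rk\g_0$ for all such $\xi$, giving the lower bound on the index. This ``normal form under the unipotent action'' step is the real content and is absent from your plan.
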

\begin{proof}
{\bf (1)} \ For  $\mu\in\g_0^*\subset\g_{(\infty)}^*$, the stabiliser $\g^\mu_{(\infty)}$ is also $\BN$-graded. 
%$\varphi_s$-stable subspace of $\g$.
Furthermore, if $\eta\in\g_j$ with $1\le j\le m-1$, then
$\varkappa(\mu,[\eta,\g_{m-j}]_{\infty})=0$ if and only if $[\eta,\mu]=0$. Hence
\beq     \label{eq:stab-infty}
               \g^\mu_{(\infty)}=\g^\mu + \g_0,
\eeq
where $\g^\mu$ is the usual stabiliser of $\mu\in\g^*$ (w.r.t. the initial Lie algebra structure on $\g$).
As is well-known, the reductive subalgebra $\g_0=\g^\vartheta$ contains regular semisimple elements 
of $\g$, see  e.g.~\cite[\S 8.8]{kac}.
These elements form a dense open subset of $\g_0$, which is denoted by $\Omega_0$. 
If $h\in\Omega_0$, then
$\g^h$ is a Cartan subalgebra of $\g$ and $\g^h\cap\g_0$ is a Cartan subalgebra of $\g_0$.
Using the identification of $\g_0$ and $\g_0^*$, we may think of the subset $\Omega_0^*$ of
``regular semisimple'' elements of $\g_0^*\subset\g_{(\infty)}^*$. For $\mu\in\Omega_0^*$, it follows 
from~\eqref{eq:stab-infty} that $\dim \g^{\mu}_{(\infty)} = \dim\g_0+\rk\g-\rk\g_0$ and hence 
$\ind\g_{(\infty)}\le \dim\g_0+\rk\g-\rk\g_0$.

{\bf (2)} \ Let us prove the opposite inequality.
We think of $\g^*_{(\infty)}$ as a graded vector space of the form
\[
   \g^*_{(\infty)}=\g_{0}^*\oplus\g_{1}^*\oplus\ldots\oplus\g_{m-1}^* .
\]
Write $\ad_{(\infty)}^*$ for the coadjoint representation of $\g_{(\infty)}$.
The graded structure of $\g_{(\infty)}$ described in Proposition~\ref{prop:graded-limits} implies that
$\ad_{(\infty)}^*$ has the property that 
\beq     \label{eq:graded-infty}
   \ad^*_{(\infty)}(\g_j){\cdot}\g_{i}^*\subset \g_{i+m-j}^*. 
\eeq
Take any $\xi=\sum_{j=0}^{m-1} \xi_j\in \g^*_{(\infty)}$ such that $\xi_0\in\Omega_0^*$.
Let $h\in\Omega_0$ be the regular semisimple element of $\g$ corresponding to $\xi_0$ under our 
identifications. Set $\te=\g^h$. Then $[\g,h]=\te^{\perp}$ is the orthogonal complement of 
$\te$ with respect to $\varkappa$ and $\g=[\g,h]\oplus \te$. For $\te_j=\te\cap\g_j$, this implies that
$[\g_j,h]=\g_j\cap \te_{m-j}^{\perp}$ and $[\g_j,h]\oplus \te_j=\g_j$. Our goal is to prove that 
the orbit $G_{(\infty)}{\cdot}\xi$ contains an element $\gamma$ such that  
$ \dim\g_{(\infty)}^\gamma\ge \dim\g_0+\rk\g-\rk\g_0$. We perform this step by step, as follows.

For $\eta\in\g_{m-1}$, we have $\ad_{(\infty)}^*(\eta){\cdot}\xi_0\in\g^*_1$ and 
\[
  \exp(\ad_{(\infty)}^*(\eta)){\cdot}\xi\in \xi_0+(\xi_1+\ad_{(\infty)}^*(\eta){\cdot}\xi_0)+ (\bigoplus_{j\ge 2} \g_j^*) .
\] 
Since $\ad_{(\infty)}^*(\g_{m-1}){\cdot}\xi_0=\g_{1}^*\cap \Ann(\te_1)$, 
there is $\eta_1\in\g_{m-1}$ such that 
$\gamma_1:=\xi_1+\ad_{(\infty)}^*(\eta_1){\cdot}\xi_0 \in\te_1^*$. 
Next, applying $\exp(\ad_{(\infty)}^*(\eta))$ with $\eta\in\g_{m-2}$ to 
$\exp(\ad_{(\infty)}^*(\eta_1))\xi=\xi_0+\gamma_1+\xi'_2+\dots$, we do not affect the summands 
$ \xi_0+\gamma_1$. In doing so, we can replace $\xi_2'$ with $\gamma_2\in \te_2^*$.
Eventually, we get in $G_{(\infty)}{\cdot}\xi$ an element of the form 
$\gamma=\xi_0+\sum_{j=1}^{m-1}\gamma_j$ with $\gamma_j\in\te_j^*$. 
It is easily seen that $\te\subset \g_{(\infty)}^\gamma$, hence 
\[
    \dim\g_{(\infty)}^\gamma\ge \dim\g_0+\rk\g-\rk\g_0 ,
\]
as required.
\end{proof}

\begin{cl}  \label{cor:infty=reg}
One has $\infty\in \BP_{\sf reg}$ if and only if\/ $\dim\g_0=\rk\g_0$, i.e., $\g_0$ is an abelian
subalgebra of $\g$. 
\end{cl}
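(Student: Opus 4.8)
The plan is to deduce this corollary directly from Theorem~\ref{thm:ind-inf} together with the definitions of $\BP_{\sf reg}$ and of the index. Recall that $\infty\in\BP_{\sf reg}$ means, by definition, that $\ind\g_{(\infty)}=\ind\g$. Since $\g$ is semisimple, $\ind\g=\rk\g=:l$. By Theorem~\ref{thm:ind-inf}, $\ind\g_{(\infty)}=\dim\g_0+\rk\g-\rk\g_0$. Hence $\infty\in\BP_{\sf reg}$ is equivalent to the numerical identity
\[
    \dim\g_0+\rk\g-\rk\g_0=\rk\g ,
\]
which simplifies at once to $\dim\g_0=\rk\g_0$.

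The second step is to observe that $\dim\g_0=\rk\g_0$ holds if and only if $\g_0$ is abelian. Here I would use that $\g_0=\g^\vth$ is a reductive Lie algebra (noted in Section~\ref{sect:3}), so it splits as $\g_0=\z(\g_0)\oplus[\g_0,\g_0]$ with $[\g_0,\g_0]$ semisimple; its rank equals $\dim\z(\g_0)+\rk[\g_0,\g_0]$, and for a nonzero semisimple Lie algebra the dimension strictly exceeds the rank. Thus $\dim\g_0=\rk\g_0$ forces $[\g_0,\g_0]=0$, i.e.\ $\g_0$ is abelian; the converse is trivial. Combining with the previous paragraph gives the claim.

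There is essentially no obstacle here: the corollary is a bookkeeping consequence of Theorem~\ref{thm:ind-inf}. The only point requiring a word of justification is the equivalence ``$\dim\g_0=\rk\g_0\iff\g_0$ abelian'', which rests on the reductivity of $\g_0=\g^\vth$ and the elementary fact $\dim\es>\rk\es$ for any nonzero semisimple $\es$. I would state the short argument in two or three lines and be done.
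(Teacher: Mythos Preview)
Your proof is correct and is exactly the deduction the paper intends: the corollary is stated immediately after Theorem~\ref{thm:ind-inf} with no separate proof, so the argument is precisely to equate $\ind\g_{(\infty)}=\dim\g_0+\rk\g-\rk\g_0$ with $\rk\g$ and note that $\dim\g_0=\rk\g_0$ characterises abelian $\g_0$ since $\g_0$ is reductive. (The paper does record, in Remark~\ref{rem:short-proof}, an alternative argument avoiding the full strength of Theorem~\ref{thm:ind-inf}, but that is supplementary, not the main route.)
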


\begin{rmk}    \label{rem:short-proof}
(1) There is a short proof of the corollary that does not use Theorem~\ref{thm:ind-inf} in
full strength. If $\infty\in \BP_{\sf reg}$, then $\cz_\infty\subset \gZ$. Since $\g_0$ belongs  
to the centre of $\g_{(\infty)}$, we have $\g_0\subset\cz_\infty$. Hence $\g_0$ has to be abelian in $\g$. 
Conversely, if $\g_0$ is abelian, then, for $\mu\in\Omega_0^*$ in the first part of the proof, we obtain
$\dim\g^\mu_{(\infty)}=\rk\g$. Hence $\ind\g_{(\infty)}=\rk\g$ and $\infty\in \BP_{\sf reg}$.
\\ \indent
(2) By V.G.\,Kac's classification of elements of finite order in $G$~\cite[Chap.\,8]{kac}, if $\g$ is simple, 
$\vartheta$ is inner, and $\g_0=\g^\vartheta$ is abelian, then $\oth$ is at least the Coxeter number of 
$\g$. This means that, for many interesting examples  with small $\oth$, we have
$\infty\in \BP_{\sf sing}$.
\end{rmk}
Let us check another technical condition, which is required below. 
Recall from Section~\ref{subs:coadj} that $\pi(\gamma)=\hat\gamma$ is a 
skew-symmetric bilinear form on $\g$ associated with $\gamma\in\g^*$.
We then need the rank of the restriction
$\pi(\xi)|_V$, if $\xi\in\g^*$ is generic and $V=\ker\pi_\infty(\xi)=\g_{(\infty)}^\xi$.
Set $\g_{>0}=\bigoplus_{j>0}\g_j$ and $\g_{>0}^*=\bigoplus_{j>0}\g_j^*$.

\begin{lm}               \label{rank-ker}
For any $\vartheta$ and generic $\xi\in\g^*$, we have $\rk\!(\pi(\xi)|_V) = \dim V - \rk\g$.
\end{lm}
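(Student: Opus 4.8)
The strategy is to compare the two skew-forms $\pi(\xi)$ (the initial Lie--Poisson bracket) and $\pi_\infty(\xi)$ on $\g$, using that $\pi(\xi) = \pi_0(\xi) + \pi_\infty(\xi)$ by Proposition~\ref{prop:compat}, and to exploit the graded structure of $\g_{(\infty)}$ from Proposition~\ref{prop:graded-limits}. Since $V = \ker\pi_\infty(\xi)$ has dimension $\ind\g_{(\infty)} = \dim\g_0 + \rk\g - \rk\g_0$ for generic $\xi$ (Theorem~\ref{thm:ind-inf}), the claim $\rk(\pi(\xi)|_V) = \dim V - \rk\g$ is equivalent to saying that the radical of $\pi(\xi)|_V$ has dimension $\rk\g$, i.e. $\ind\g$. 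First I would reduce to a convenient representative of the $G_{(\infty)}$-orbit of $\xi$: by the argument in the proof of Theorem~\ref{thm:ind-inf}, for generic $\xi$ we may assume $\xi = \xi_0 + \sum_{j\ge 1}\gamma_j$ with $\xi_0 = h \in \Omega_0$ a regular semisimple element of $\g$ and $\gamma_j \in \te_j^* = (\te\cap\g_j)^*$, where $\te = \g^h$ is a Cartan subalgebra; moreover $\te \subset V$ and in fact $V = \g^\xi_{(\infty)}$ can be described explicitly along the lines of~\eqref{eq:stab-infty}.

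The key computation is then to evaluate $\pi(\xi)$ on $V$. The radical of $\pi(\xi)|_V$ is $\{v \in V \mid \xi([v, V]) = 0\}$, i.e. $V \cap V^{\perp_{\pi(\xi)}}$. I would show that $\te$ — or more precisely the full stabiliser $\g^\xi$ of $\xi$ with respect to the \emph{initial} bracket — lies in this radical, and that conversely the radical is contained in $\g^\xi$; this would give radical $= V \cap \g^\xi$, and then one must check this intersection has dimension $\rk\g$. Since $\xi_0 = h$ is regular semisimple in $\g$, one expects $\g^\xi$ to be ``small'' and to be forced inside $V$ by the grading: the components of $\g^\xi$ in each $\g_j$ should be governed by the condition of commuting with $h$ plus a nilpotent correction from the $\gamma_j$, and a semicontinuity/genericity argument should pin down $\dim(\g^\xi \cap V) = \rk\g = \ind\g$. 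Alternatively, and perhaps more cleanly, I would argue via ranks: $\rk\pi_\infty(\xi) = \dim\g - \dim V$, and $\rk\pi(\xi) = \dim\g - \ind\g = \dim\g - \rk\g$ for generic $\xi$ (which one must justify — that a generic $\xi$ in the sense required is also $G$-regular; this follows since $\g^*_{\sf reg}$ is dense open and the condition on $\xi$ is only a genericity condition). Writing $\pi(\xi) = \pi_0(\xi) + \pi_\infty(\xi)$ and restricting to $V = \ker\pi_\infty(\xi)$, one has $\pi(\xi)|_V = \pi_0(\xi)|_V$, so it suffices to compute $\rk(\pi_0(\xi)|_V)$ using the explicit $\pi_0$-bracket (which kills pairs $\g_i, \g_j$ with $i+j\ge m$).

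The main obstacle I anticipate is the genericity bookkeeping: one needs that for $\xi$ in a dense open subset of $\g^*$, simultaneously (a) $\dim V = \ind\g_{(\infty)}$, (b) $\xi$ is $G$-regular, and (c) the rank of $\pi(\xi)|_V$ attains its maximal value; and then one must verify that on the specific normal form $\gamma = h + \sum\gamma_j$ all three hold, which requires knowing that such $\gamma$ is $G$-regular (not merely $G_{(\infty)}$-regular). The delicate point is relating the stabiliser $\g^\gamma$ for the original bracket to the graded data: since $h$ is regular semisimple and the $\gamma_j$ are ``diagonal'' (lie in $\te_j^*$), one should be able to compute $\g^\gamma$ fairly explicitly — its degree-$0$ part is $\te\cap\g_0 = \te_0$, and higher parts are constrained — and show $\dim\g^\gamma = \rk\g$ with $\g^\gamma \subset V$. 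I would isolate this as the crux and handle it by the same $\exp(\ad^*_{(\infty)})$-normalisation technique used in Theorem~\ref{thm:ind-inf}, combined with the observation that the relevant skew-form on $V/\g^\gamma$ is nondegenerate, giving the even-dimensional complement of rank $\dim V - \rk\g$.
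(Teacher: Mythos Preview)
Your plan is considerably more elaborate than what is needed, and it has a genuine gap in the upper bound direction. The paper's proof is short and rests on two observations you do not use.

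\textbf{The upper bound.} The inequality $\rk(\pi(\xi)|_V)\le \dim V-\rk\g$ for $\xi\in\g^*_{\sf reg}\cap\g^*_{\infty,\sf reg}$ is a general fact about pencils of skew-symmetric forms, quoted from~\cite[Appendix]{OY}: if $A$ is a form of maximal rank in a pencil and $V$ is the kernel of any other form in the pencil, then the kernel of $A|_V$ has dimension at least $\dim\ker A$. You try to obtain this bound instead by showing $\g^\xi\subset V$ so that $\g^\xi$ sits inside the radical of $\pi(\xi)|_V$. But this inclusion is not established for generic $\xi$ (you only argue it at the special normal-form points), and lower semicontinuity of rank goes the wrong way: knowing $\rk(\pi(\gamma)|_V)\le\dim V-\rk\g$ at special $\gamma$ does \emph{not} bound the generic rank from above. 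Relatedly, your $G_{(\infty)}$-orbit reduction is dangerous here: the action of $G_{(\infty)}$ does not intertwine $\pi$, so you cannot transport $\rk(\pi(\xi)|_{V(\xi)})$ along $G_{(\infty)}$-orbits.

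\textbf{The lower bound.} The paper avoids normal forms entirely. Since $\g_0$ lies in the centre of $\g_{(\infty)}$, one always has $\g_0\subset V$, so $V=\g_0\oplus V'$ with $\dim V'=\rk\g-\rk\g_0$ by Theorem~\ref{thm:ind-inf}. Write $\widetilde V$ for the kernel of $\pi(\xi)|_V$. The one-line observation is that $\widetilde V\cap\g_0\subset\g_0^{\xi_0}$: if $v\in\widetilde V\cap\g_0$ then $\xi([v,w])=0$ for all $w\in\g_0\subset V$, and since $[v,w]\in\g_0$ this reads $\xi_0([v,w])=0$. Imposing the open condition $\xi_0\in(\g_0)^*_{\sf reg}$ then gives
\[
\rk\g_0\ge\dim(\widetilde V\cap\g_0)\ge\dim\widetilde V-\dim V'=\dim\widetilde V-(\rk\g-\rk\g_0),
\]
hence $\dim\widetilde V\le\rk\g$, i.e.\ $\rk(\pi(\xi)|_V)\ge\dim V-\rk\g$. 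No description of the full $V$, no explicit $\g^\xi$, and no normal form is required.

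In short: your route via the normal form $\gamma=h+\sum\gamma_j$ might in principle be pushed through for the lower bound, but it does not supply the upper bound for generic $\xi$, and the paper's argument bypasses all of the ``genericity bookkeeping'' you anticipate by combining the pencil-theoretic upper bound with the elementary inclusion $\widetilde V\cap\g_0\subset\g_0^{\xi_0}$.
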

\begin{proof}
Take any $\xi\in\g^*_{\sf reg}\cap \g_{\infty,{\sf reg}}^*$. Then $\rk\!(\pi(\xi)|_V)\le \dim V - \rk\g$
by~\cite[Appendix]{OY} and $\dim V=\dim\g_0+\rk\g-\rk\g_0$ by Theorem~\ref{thm:ind-inf}.
Write $\xi=\xi_0+\xi'$ with $\xi_0\in\g_0^*$ and $\xi'\in\g_{>0}^*$. 
Assume also that $\xi_0\in(\g_0)^*_{\sf reg}$. Note that this is an open condition on $\xi$, too.

As $\g_{0}\subset V$ (cf. Eq.~\ref{eq:stab-infty}), one can write $V=\g_{0}\oplus V'$ for some 
$V'\subset \g_{>0}$. Here we have $\dim V'=\rk\g-\rk\g_0$.
Let $\widetilde{V}$  be the  kernel of $\pi(\xi)|_V$. Then $\widetilde{V}\cap\g_0\subset\g_0^{\xi_0}$
and 
\[
  \rk\g_0 \ge \dim (\widetilde{V}\cap\g_0)\ge \dim\widetilde{V}-\dim V'=\dim\widetilde{V}+\rk\g_0-\rk\g .
\]
Hence $\dim\widetilde{V}\le \rk\g$ and thereby $\rk(\pi(\xi)|_V)\ge  \dim V-\rk\g$. 
This settles the claim. 
\end{proof}

For future use, we record yet another property of $\g_{(\infty)}$.

\begin{lm}             \label{lm:xi}
If $\xi=\xi_0+\xi'$, where  $\xi'\in \g_{>0}^*$,  and $\xi_0\in \g_0^*$ is regular in $\g$, then $\xi\in \g_{\infty,{\sf reg}}^*$.
\end{lm}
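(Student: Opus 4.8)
The plan is to bound the coadjoint stabiliser $\g_{(\infty)}^\xi$ from above: by Theorem~\ref{thm:ind-inf} one has $\ind\g_{(\infty)}=\dim\g_0+\rk\g-\rk\g_0$, and since $\dim\g_{(\infty)}^\eta\ge\ind\g_{(\infty)}$ for every $\eta\in\g^*$, it suffices to show $\dim\g_{(\infty)}^\xi\le\dim\g_0+\rk\g-\rk\g_0$. Because $\g_0$ lies in the centre of $\g_{(\infty)}$ (Proposition~\ref{prop:graded-limits}) and is a graded summand, $\g_{(\infty)}^\xi=\g_0\oplus W$ with $W=\g_{(\infty)}^\xi\cap\g_{>0}$, so the task reduces to proving $\dim W\le\rk\g-\rk\g_0$.

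Next I would set up a triangular linear system. Let $h\in\g_0$ be the element with $\xi_0=\varkappa(h,-)$; by hypothesis $h$ is regular in $\g$, so $\te:=\g^h$ has $\dim\te=\rk\g$, and since $\ad h$ preserves the grading, $\te=\bigoplus_j\te_j$ with $\te_j=\te\cap\g_j$, whence $\sum_{j\ge1}\dim\te_j=\rk\g-\dim\te_0$; moreover $\te_0=\g_0^h$ is the centraliser of $h$ in the reductive algebra $\g_0$, so $\dim\te_0\ge\rk\g_0$. Write $W=\Ker L$ for the linear map $L\colon\g_{>0}\to\g_{>0}^*$, $L(y)=\ad^*_{(\infty)}(y){\cdot}\xi$, which does land in $\g_{>0}^*$ by~\eqref{eq:graded-infty}. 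Decomposing $y=\sum_{j=1}^{m-1}y_j$ and $\xi=\xi_0+\sum_{k=1}^{m-1}\xi_k$ with $\xi_k\in\g_k^*$, and using~\eqref{eq:graded-infty} once more, a short index count shows that the $\g_i^*$-component of $L(y)$ equals $\ad^*_{(\infty)}(y_{m-i}){\cdot}\xi_0+\sum_{k=1}^{i-1}\ad^*_{(\infty)}(y_{k+m-i}){\cdot}\xi_k$, a sum in which only the first term depends on $y_{m-i}$, the rest involving $y_{m-i+1},\dots,y_{m-1}$.

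The one genuine computation is that the ``diagonal'' map $y_{m-i}\mapsto\ad^*_{(\infty)}(y_{m-i}){\cdot}\xi_0$ from $\g_{m-i}$ to $\g_i^*$ has kernel exactly $\te_{m-i}$: for $z\in\g_i$ we have $[y_{m-i},z]_{(\infty)}=[y_{m-i},z]\in\g_0$, so the pairing is $-\varkappa(h,[y_{m-i},z])=-\varkappa([h,y_{m-i}],z)$, which vanishes for all $z\in\g_i$ precisely when $[h,y_{m-i}]=0$, the form $\varkappa$ pairing $\g_i$ with $\g_{m-i}$ perfectly. Hence, ordering the input blocks $y_{m-1},y_{m-2},\dots,y_1$ and the output blocks $\g_1^*,\g_2^*,\dots,\g_{m-1}^*$, the matrix of $L$ is block lower-triangular with these maps on the diagonal, so $\rk L\ge\sum_{i=1}^{m-1}(\dim\g_{m-i}-\dim\te_{m-i})$ by elementary linear algebra. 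Therefore
\[
  \dim W=\dim\g_{>0}-\rk L\le\sum_{j=1}^{m-1}\dim\te_j=\rk\g-\dim\te_0\le\rk\g-\rk\g_0 ,
\]
which combined with the first paragraph completes the argument. I do not expect a substantial obstacle here: the graded structure of the coadjoint representation of $\g_{(\infty)}$ is already available in~\eqref{eq:graded-infty}, and the only delicate point is the bookkeeping that casts $L$ into block-triangular form so that the ranks of the (explicitly computed) diagonal blocks add up.
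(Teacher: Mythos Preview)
Your proof is correct, but it follows a different path from the paper's.

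The paper exploits the $\bbk^\star$-action $\psi_s$ on $\g_{(\infty)}$ (Proposition~\ref{prop:compat}{\sf (iv)}), which is an automorphism of that graded Lie algebra: since $s^m\psi_s^*(\xi)\to\xi_0$ as $s\to 0$ and the stabiliser dimension is constant along the $\bbk^\star$-orbit, openness of $\g^*_{\infty,\sf reg}$ reduces the claim to showing $\xi_0\in\g^*_{\infty,\sf reg}$; this in turn follows at once from formula~\eqref{eq:stab-infty} together with the fact that an element of $\g_0$ regular in $\g$ is also regular in $\g_0$. By contrast, you bypass the degeneration entirely and attack $\g_{(\infty)}^\xi$ head-on, showing via the graded formula~\eqref{eq:graded-infty} that the coadjoint map $L\colon\g_{>0}\to\g_{>0}^*$ is block lower-triangular with diagonal blocks whose kernels are the $\te_{m-i}$. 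Your route is more computational but entirely self-contained: it does not invoke semicontinuity, nor the implication ``regular in $\g$ $\Rightarrow$ regular in $\g_0$'' (you only need the softer inequality $\dim\g_0^h\ge\rk\g_0$, valid for any element of a reductive algebra). The paper's argument is shorter and conceptually cleaner, while yours makes the stabiliser structure more explicit and would adapt readily to situations where no convenient $\bbk^\star$-symmetry is available.
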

\begin{proof} As in Proposition~\ref{prop:compat}{\sf (iv)}, consider the invertible linear map
$\psi_s :\g \to \g$ such that $\psi(x_j)=s^{m-j} x_j$ for $x_j\in\g_j$  and $s\in\bbk^\star$.
It follows from Proposition~\ref{prop:graded-limits} that $\psi_s$ 
is an automorphism of the graded Lie algebra $\g_{(\infty)}$. 
Let $\psi_s^*$ denote the induced action on $\g^*_{(\infty)}$, i.e.,
$\psi_s^*(\xi_j)=s^{j-m}\xi_j$ if $\xi_j\in\g_j^*$. 
Then $\dim\g^\xi_{(\infty)}=\dim\g^{\psi_s^*(\xi)}_{(\infty)}$ for any $\xi\in\g^*$. Note that 
$\lim_{s\to 0} s^m \psi_s(\xi)=\xi_0$. If $\xi_0$ is regular in $\g_{(\infty)}^*$, then so is $\xi$. 
It remains to deal with $\xi_0$.  

In view of \eqref{eq:stab-infty}, $\dim\g^{\xi_0}_{(\infty)}=\dim\g_0+\dim\g^{\xi_0}-\dim\g_0^{\xi_0}$. 
Because $\xi_0$ is regular in $\g$, it is also regular in $\g_0$. 
Thus $\dim\g^{\xi_0}_{(\infty)}=\ind\g_{(\infty)}$, and we are done. 
\end{proof}

\subsection{Pencils of skew-symmetric matrices and differentials}  
\label{subs:pssm} 
Recall that any $F\in \gS(\g)$ is regarded as a polynomial function on $\g^*$.
Then $\textsl{d} F$ is the differential of $F$, which is a polynomial mapping from $\g^*$ to $\g$.
If $\gamma\in\g^*$, then $\textsl{d}_\gamma F\in \g$ stands for the value of $\textsl{d} F$ at $\gamma$.

For a subalgebra $\ca\subset\gS(\g)$ and $\gamma\in\g^*$, set
$\textsl{d}_\gamma \ca=\langle \textsl{d}_\gamma F\mid F\in \ca\rangle_{\bbk}$.
Then $\trdeg \ca=\max_{\gamma\in\q^*} \dim \textsl{d}_\gamma \ca$. 
By the definition of $\gZ$, we have $\textsl{d}_\gamma \gZ=\sum_{t\in \BP_{\sf reg}}  \textsl{d}_\gamma \cz_t$. 
Since $\cz_t$ is the centre of $(\gS(\g),\{\,\,,\,\}_{t})$, there is the inclusion 
 $\textsl{d}_\gamma \cz_t \subset \ker \pi_t(\gamma)$ for each $t\in\BP$ and each $\gamma\in\g^*$.   
 
Let $\{H_1,\dots,H_l\}$, $l=\rk\g$,  be a set of homogeneous algebraically independent generators of
$\gS(\g)^\g$. For any $H\in\gS(\g)^\g$, we have $\textsl{d}_\xi H\in\g^\xi$.
The {\it Kostant regularity criterion\/} for $\g$,  see \cite[Theorem~9]{ko63}, asserts that
\beq     \label{eq:ko-re-cr}
\text{ $\langle\textsl{d}_\xi H_j \mid 1\le j\le l\rangle_{\bbk}=\g^\xi$ \ if and only if \ $\xi\in\g^*_{\sf reg}$.}
\eeq
Clearly, this criterion applies to semisimple algebras $\g_{(t)}$ when $t\ne 0,\infty$. 
That is, 
\beq          \label{span-dif}
\text{if }\ t\ne0,\infty,  \text{ then }\  \xi \in\g^*_{(t),\sf reg} \ 
\Leftrightarrow \  
\textsl{d}_\xi \cz_t =\ker \pi_t(\xi) \Leftrightarrow \ \dim \ker \pi_t(\xi)=\rk\g . 
\eeq
As a step towards describing $\gZ$, we first consider the smaller algebra 
\[
      \gZ_{\times}=\mathsf{alg}\langle\cz_t \mid t\in\bbk^{\star}\rangle\subset\gZ .
\]
An open subset of an algebraic variety is said to be {\it big}, if its complement does not contain divisors. 
By Proposition~\ref{prop:codim-sing}, there is a big open subset $U_{\sf sr}\subset\g^*$ such that 
$\xi\in \g_{(t),{\sf reg}}^*$ for any $\xi\in U_{\sf sr}$ and any $t\ne 0,\infty$. 
Suppose that $\xi\in U_{\sf sr}$. Then 
\beq         \label{sum-dif}
\textsl{d}_\xi \gZ_{\times}=\sum_{t\in\bbk^{\star}} \ker\pi_t(\xi) \subset 
 \sum_{t:\, \rk\pi_t(\xi)=l} \ker\pi_t(\xi)=:L(\xi).
\eeq
Let us recall a method that provides an upper bound on $\dim L(\xi)$ and thereby on $\trdeg\gZ_\times$,
see~\cite[Section~1.1]{kruks}.   

Let  $\mathcal P(\xi)=\{ c\pi_t(\xi)\mid c\in\mK, t\in\BP\}$ be a pencil of skew-symmetric $2$-forms on 
$\g$, which is spanned by $\pi(\xi)$ and $\pi_0(\xi)$.  A $2$-form in this pencil is said to be {\it regular\/} 
if $\rk(c\pi_t(\xi))=l$. Otherwise, it is {\it singular}. Set 
$U_{\sf srr}=U_{\sf sr}\cap\g_{(0),{\sf reg}}^*\cap \g_{\infty,{\sf reg}}^*$. It is a dense open subset 
of $\g^*$, which may not be big. 

\begin{prop}       \label{prop:krest}
If  $\xi\in U_{\sf srr}$, then $\textsl{d}_\xi\gZ_{\times}=\textsl{d}_\xi\gZ=L(\xi)$.
\end{prop}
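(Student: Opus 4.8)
The plan is to establish the two equalities $\textsl{d}_\xi\gZ_\times = L(\xi)$ and $\textsl{d}_\xi\gZ = L(\xi)$ separately, using the structure of pencils of skew-symmetric forms together with the properties of the two limit algebras $\g_{(0)}$ and $\g_{(\infty)}$ collected above. For the first equality, one inclusion is free: \eqref{sum-dif} already gives $\textsl{d}_\xi\gZ_\times\subseteq L(\xi)$ for $\xi\in U_{\sf sr}$. For the reverse inclusion, I would invoke the linear-algebra fact about Kronecker pencils recalled in \cite[Section~1.1]{kruks}: for a pencil $\mathcal P(\xi)$ of skew-symmetric forms on a finite-dimensional space in which the generic rank is $l=\rk\g$, the sum $L(\xi)=\sum_{\text{regular } t}\ker\pi_t(\xi)$ is already spanned by the kernels of the regular forms in the pencil, and in fact $L(\xi)$ is an isotropic subspace for every form in the pencil whose dimension is controlled by the Kronecker invariants. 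Concretely, since $\xi\in U_{\sf srr}$ means $\xi$ is regular for $\g_{(0)}$, for $\g_{(\infty)}$, and for all $\g_{(t)}$ with $t\ne 0,\infty$, \emph{every} form in $\mathcal P(\xi)$ is regular, so the pencil is ``regular'' in the sense of Kronecker theory; then $L(\xi)$ equals the span of $\ker\pi_t(\xi)$ over any infinite subset of parameters $t$, in particular over $t\in\bbk^\star$ alone. Combined with \eqref{span-dif}, which gives $\textsl{d}_\xi\cz_t=\ker\pi_t(\xi)$ for each $t\in\bbk^\star$ (as $\xi\in U_{\sf sr}\subset\g^*_{(t),\sf reg}$), this yields $\textsl{d}_\xi\gZ_\times\supseteq\sum_{t\in\bbk^\star}\ker\pi_t(\xi)=L(\xi)$, hence equality.

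Next I would handle the passage from $\gZ_\times$ to $\gZ$. By definition $\gZ$ is generated by all $\cz_t$ with $t\in\BP_{\sf reg}$, which adds at most the two extra centres $\cz_0$ and $\cz_\infty$ (only those among $\{0,\infty\}$ that lie in $\BP_{\sf reg}$). Thus $\textsl{d}_\xi\gZ=\textsl{d}_\xi\gZ_\times + \sum_{t\in\{0,\infty\}\cap\BP_{\sf reg}}\textsl{d}_\xi\cz_t$. Since $\textsl{d}_\xi\cz_t\subseteq\ker\pi_t(\xi)$ always, and since $\xi\in U_{\sf srr}$ forces both $\pi_0(\xi)$ and $\pi_\infty(\xi)$ to have rank $l$, each such $\ker\pi_t(\xi)$ is one of the regular-form kernels appearing in the definition of $L(\xi)$. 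Therefore $\textsl{d}_\xi\cz_t\subseteq L(\xi)$ for $t\in\{0,\infty\}$ as well, giving $\textsl{d}_\xi\gZ\subseteq L(\xi)$. The reverse inclusion $\textsl{d}_\xi\gZ\supseteq\textsl{d}_\xi\gZ_\times=L(\xi)$ is immediate from $\gZ_\times\subseteq\gZ$. Hence $\textsl{d}_\xi\gZ=\textsl{d}_\xi\gZ_\times=L(\xi)$, as claimed.

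The step I expect to be the main obstacle is the precise Kronecker-theory input: one must be careful that over the \emph{dense open} set $U_{\sf srr}$ the pencil $\mathcal P(\xi)$ genuinely has no singular member, so that the classical description of $L(\xi)$ (as the span of kernels of regular forms, already realized by finitely many — indeed by any infinite family of — parameters) applies verbatim. This is exactly why $U_{\sf srr}$ is defined by intersecting $U_{\sf sr}$ with $\g^*_{(0),\sf reg}$ and $\g^*_{\infty,\sf reg}$: on $U_{\sf sr}$ the forms $\pi_t(\xi)$, $t\ne0,\infty$, are regular by Proposition~\ref{prop:codim-sing}, and the two extra conditions pin down regularity at the two remaining values. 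Once one knows the pencil is everywhere-regular on $U_{\sf srr}$, the identity $\sum_{t\in\bbk^\star}\ker\pi_t(\xi)=\sum_{t:\,\rk\pi_t(\xi)=l}\ker\pi_t(\xi)=L(\xi)$ is purely formal, and the rest of the argument is bookkeeping about which of $0,\infty$ sit in $\BP_{\sf reg}$ — which does not matter here, since their kernels land inside $L(\xi)$ regardless.
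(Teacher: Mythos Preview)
Your argument contains a genuine error in the first part. You claim that for $\xi\in U_{\sf srr}$ \emph{every} form $\pi_t(\xi)$ in the pencil $\mathcal P(\xi)$ is regular, and you base your whole treatment of $\gZ_\times$ on this. But this conflates two distinct notions of ``regular''. The condition $\xi\in\g^*_{(t),\sf reg}$ means $\rk\pi_t(\xi)=\rk\pi_t=\dim\g-\ind\g_{(t)}$, i.e.\ $\xi$ is a regular point for the Lie algebra $\g_{(t)}$. Regularity of $\pi_t(\xi)$ \emph{in the pencil} means $\rk\pi_t(\xi)=\dim\g-l$, the maximal rank over all $t$. These coincide only when $\ind\g_{(t)}=l$, i.e.\ $t\in\BP_{\sf reg}$. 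By Theorem~\ref{thm:ind-inf}, $\ind\g_{(\infty)}=\dim\g_0+\rk\g-\rk\g_0$, which exceeds $l$ whenever $\g_0$ is non-abelian. In that case $\pi_\infty(\xi)$ is singular in the pencil for \emph{every} $\xi$, in particular for every $\xi\in U_{\sf srr}$; your hypothesis ``the pencil has no singular member'' simply fails, and your Kronecker-regularity argument does not apply.

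The paper's proof avoids this by a different and more robust observation: for $\xi\in U_{\sf sr}$ the set of $t$ for which $\pi_t(\xi)$ is regular in the pencil contains $\bbk^\star$ and differs from it by at most $\{0,\infty\}$. The linear-algebra fact actually needed (cited from \cite[Appendix]{codim3}) is that deleting \emph{finitely many regular lines} from the sum defining $L(\xi)$ does not change it; hence $\sum_{t\in\bbk^\star}\ker\pi_t(\xi)=L(\xi)$ already for $\xi\in U_{\sf sr}$, with no assumption whatsoever on $\pi_0(\xi)$ or $\pi_\infty(\xi)$. Your second part, showing $\textsl{d}_\xi\gZ\subset L(\xi)$, is essentially on the right track, but the justification must be stated correctly: for $t\in\{0,\infty\}\cap\BP_{\sf reg}$ one has $\ind\g_{(t)}=l$, and \emph{then} $\xi\in\g^*_{(t),\sf reg}$ forces $\rk\pi_t(\xi)=\dim\g-l$, so $\ker\pi_t(\xi)\subset L(\xi)$. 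Your blanket assertion that ``$U_{\sf srr}$ forces both $\pi_0(\xi)$ and $\pi_\infty(\xi)$ to have rank $l$'' is again the same conflation and is false in general.
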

\begin{proof}
Suppose that $\xi\in U_{\sf sr}$. The space $L(\xi)$ is the sum of kernels over the regular lines in the pencil.  Since 
$\BP_{\sf reg}\setminus\bbk^\star$ is finite, we have  $\sum_{t\in\bbk^\star}\ker\pi_t(\xi)=L(\xi)$
by~\cite[Appendix]{codim3}. Hence $\textsl{d}_\xi \gZ_{\times}=L(\xi)$.  
If  $\xi\in U_{\sf srr}$, then $\textsl{d}_\xi\gZ\subset L(\xi)$.  Thereby 
$\textsl{d}_\xi\gZ_{\times}\subset \textsl{d}_\xi\gZ \subset L(\xi)$, which  leads to the equality $\textsl{d}_\xi\gZ_{\times}=\textsl{d}_\xi\gZ$. 
\end{proof}

\begin{cl}           \label{cl-4}
For a generic $\xi\in U_{\sf srr}$, we have $\dim L(\xi)=\trdeg\gZ_{\times}=\trdeg\gZ$.
In particular, $\gZ_{\times}\subset\gZ$ is an algebraic extension.
\end{cl}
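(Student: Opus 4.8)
The plan is to combine Proposition~\ref{prop:krest} with the fact that $U_{\sf srr}$ is dense in $\g^*$ together with the standard relation between transcendence degree and generic rank of differentials. First I would recall that for any subalgebra $\ca\subset\gS(\g)$ one has $\trdeg\ca=\max_{\xi\in\g^*}\dim\textsl{d}_\xi\ca$, and that the set where this maximum is attained is dense and open in $\g^*$. Applying this to $\ca=\gZ$ and to $\ca=\gZ_\times$, we find a dense open subset $W\subset\g^*$ on which $\dim\textsl{d}_\xi\gZ=\trdeg\gZ$ and $\dim\textsl{d}_\xi\gZ_\times=\trdeg\gZ_\times$ simultaneously. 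Since $U_{\sf srr}$ is dense and open (as noted just before Proposition~\ref{prop:krest}), the intersection $W\cap U_{\sf srr}$ is nonempty; pick $\xi$ in it. By Proposition~\ref{prop:krest}, $\textsl{d}_\xi\gZ_\times=\textsl{d}_\xi\gZ=L(\xi)$, whence
\[
  \dim L(\xi)=\dim\textsl{d}_\xi\gZ_\times=\trdeg\gZ_\times=\dim\textsl{d}_\xi\gZ=\trdeg\gZ .
\]
This proves the displayed chain of equalities; the value $\dim L(\xi)$ is the same for all $\xi$ in the dense open set $W\cap U_{\sf srr}$, so it is legitimate to call it the dimension of $L(\xi)$ for generic $\xi\in U_{\sf srr}$.

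For the last assertion, that $\gZ_\times\subset\gZ$ is an algebraic extension, I would argue as follows. We have a chain of integral domains $\gZ_\times\subset\gZ$ with equal transcendence degree over $\bbk$. In general, if $A\subset B$ are finitely generated (or just arbitrary) domains over a field with $\trdeg_{\bbk}A=\trdeg_{\bbk}B<\infty$, then $B$ is algebraic over the fraction field of $A$: indeed any $b\in B$ together with a transcendence basis of $A$ over $\bbk$ forms an algebraically dependent set inside $B$ (since $\trdeg_{\bbk}B$ equals the size of that transcendence basis), so $b$ satisfies a nontrivial polynomial relation with coefficients in $\operatorname{Frac}(A)$. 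Hence every element of $\gZ$ is algebraic over $\operatorname{Frac}(\gZ_\times)$, i.e.\ the extension is algebraic.

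I do not expect any serious obstacle here: the corollary is essentially a formal consequence of Proposition~\ref{prop:krest} once one knows that the generic-rank locus for differentials of $\gZ$ and of $\gZ_\times$ can be met inside $U_{\sf srr}$, and the only mild point to be careful about is that $U_{\sf srr}$ need not be \emph{big} (its complement may contain divisors), so one cannot speak of $\textsl{d}_\xi\gZ$ "at a generic point of $\g^*$" — but density of $U_{\sf srr}$ is all that is needed, since transcendence degree is computed as a maximum over \emph{any} dense subset. The separate verification that $\gZ_\times$ and $\gZ$ are domains is immediate, as they are subalgebras of the polynomial ring $\gS(\g)$.
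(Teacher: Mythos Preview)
Your argument is correct and is precisely the natural elaboration of what the paper leaves implicit: the corollary has no separate proof in the paper and is meant to follow immediately from Proposition~\ref{prop:krest} together with the identity $\trdeg\ca=\max_{\gamma}\dim\textsl{d}_\gamma\ca$ recalled a few lines earlier. Your care in noting that $U_{\sf srr}$ is merely dense (not big), and your justification of the algebraic-extension claim via equality of transcendence degrees, are both appropriate.
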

Although three dimensions in Corollary~\ref{cl-4} are equal, we do not  know yet their exact value.  
For a given $\xi\in U_{\sf sr}$, $\dim L(\xi)$ depends on the number of singular lines in $\mathcal P(\xi)$.
 
\begin{prop}[{\cite[Proposition~1.5]{kruks}}]       \label{P} 
If $\xi\in U_{\sf sr}$, then   
\begin{itemize}
\item[{\sf (i)}] $\dim L(\xi)=\bb(\g)$ if and only if all  nonzero $2$-forms in $\mathcal P(\xi)$ are regular; 
\item[{\sf (ii)}] if \ $\mathcal P(\xi)$  contains a singular line, say $\bbk \pi_{t'}(\xi)$, 
then $\dim L(\xi)\le l+\frac{1}{2}\rk\pi_{t'}(\xi)$; 
\item[{\sf (iii)}] furthermore, $\dim L(\xi) = l+\frac{1}{2}\rk\pi_{t'}(\xi)$  if and only if\/ 
\begin{itemize}
\item \ $\bbk \pi_{t'}(\xi)$ is the 
only singular line in $\mathcal P(\xi)$, and 
\item \ $\rk(\pi(\xi)|_V)=\dim V-l$ for $V=\ker\pi_{t'}(\xi)$.  
\end{itemize}\end{itemize}
\end{prop}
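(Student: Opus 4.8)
The plan is to reduce everything to the Jordan--Kronecker canonical form of the pencil $\mathcal P(\xi)$ of skew-symmetric forms on $\g$, which over an algebraically closed field is classical (Kronecker). First I would invoke that normal form: the space $\g$ decomposes as a direct sum, orthogonal with respect to \emph{every} form $c\pi_t(\xi)$ in the pencil, of indecomposable pieces of two kinds --- \emph{Kronecker blocks} $\mathcal K_d$ of odd dimension $2d{+}1$ ($d\ge 0$), on which every nonzero form of the pencil has corank exactly $1$; and \emph{Jordan blocks} of even dimension $2k$ attached to a point $t_0\in\BP$, on which $c\pi_t(\xi)$ is nondegenerate for $t\ne t_0$ and has corank exactly $2$ for $t=t_0$. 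Call $k$ the half-size of such a block. Because $\xi\in U_{\sf sr}$, the generic corank in $\mathcal P(\xi)$ equals $l$, so there are exactly $l$ Kronecker blocks, with parameters $d_1,\dots,d_l$; let the Jordan part be attached to pairwise distinct points $\mu_1,\dots,\mu_q\in\BP$ (these are precisely the singular values of the pencil), with $p_i\ge 1$ blocks at $\mu_i$ whose half-sizes total $n_i$, so that $n_i\ge p_i$. Counting dimensions gives $\dim\g=l+2\sum_j d_j+2\sum_i n_i$, hence $\sum_j(d_j+1)=\bb(\g)-\sum_i n_i$.

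Next I would compute $L(\xi)$ block by block. Since the decomposition is orthogonal for all $\pi_t(\xi)$, the kernel of any form in the pencil is the direct sum of the kernels of its restrictions to the blocks. For a regular value $t$ the restriction to each Jordan block is nondegenerate, so $\ker\pi_t(\xi)\subset\bigoplus_j\mathcal K_{d_j}$, contributing one line from each $\mathcal K_{d_j}$. In the standard model of $\mathcal K_d$ this kernel line has ``Vandermonde'' coordinates $\bigl((-t)^d,(-t)^{d-1},\dots,1\bigr)$ inside a fixed $(d{+}1)$-dimensional subspace $E_j\subset\mathcal K_{d_j}$; as $t$ runs over $d{+}1$ distinct regular values these lines are linearly independent (Vandermonde), so they already span $E_j$. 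Therefore $L(\xi)=\bigoplus_j E_j$ and $\dim L(\xi)=\sum_j(d_j+1)=\bb(\g)-\sum_i n_i$.

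Now the three assertions follow by bookkeeping. For (i), $\dim L(\xi)=\bb(\g)$ iff $\sum_i n_i=0$, i.e.\ there is no Jordan block, i.e.\ no singular line. For (ii), if $\mathbb k\pi_{t'}(\xi)$ is singular, say $t'=\mu_1$, then $\rk\pi_{t'}(\xi)=\dim\g-l-2p_1$, so $l+\tfrac12\rk\pi_{t'}(\xi)=\bb(\g)-p_1\ge\bb(\g)-\sum_i n_i=\dim L(\xi)$ because $n_1\ge p_1$. For (iii), equality forces $\sum_i n_i=p_1$; since $n_1\ge p_1$ and $n_i\ge 1$ for every $i$, this holds iff $q=1$ and $n_1=p_1$. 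The first condition says exactly that $\mathbb k\pi_{t'}(\xi)$ is the only singular line. For the second I would compute $\rk(\pi(\xi)|_V)$ with $V=\ker\pi_{t'}(\xi)$: by orthogonality $V=\bigoplus_j L_j\oplus W$, where $L_j\subset\mathcal K_{d_j}$ is the one-dimensional kernel of $\pi_{t'}(\xi)$ on that block and $W$ (of dimension $2p_1$) is the kernel of $\pi_{t'}(\xi)$ on the Jordan blocks at $t'$; these summands are mutually $\pi(\xi)$-orthogonal, each $L_j$ is isotropic (it is a line), and a direct check in the Jordan model shows that $\pi(\xi)$ restricted to the kernel of $\pi_{t'}(\xi)$ on a Jordan block at $t'$ is nondegenerate if that block has half-size $1$ and vanishes identically otherwise (note $1\ne t'$ since $t'\in\{0,\infty\}$). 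Hence $\rk(\pi(\xi)|_V)=2\,\#\{\text{half-size-}1\text{ blocks at }t'\}$, which equals $\dim V-l=2p_1$ precisely when $n_1=p_1$. This yields the stated equivalence.

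The only genuine input is the Jordan--Kronecker normal form for a pencil of skew forms; granting it, everything is block-wise linear algebra. The step I expect to need the most care is the last computation on Jordan blocks --- identifying the rank of a regular form of the pencil on the kernel of a singular one --- since this is what ties the abstract condition ``$\rk(\pi(\xi)|_V)=\dim V-l$'' to the combinatorial condition ``all Jordan blocks at $t'$ have half-size $1$'' that actually governs the dimension of $L(\xi)$.
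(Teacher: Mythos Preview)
Your argument is correct. Note, however, that the paper does not prove this proposition at all: it is quoted verbatim from \cite[Proposition~1.5]{kruks} and used as a black box, so there is no ``paper's own proof'' to compare with. That said, the Jordan--Kronecker canonical form is precisely the tool behind such statements about pencils of skew forms, and your computation matches the standard one: the generic corank fixes the number $l$ of Kronecker blocks, $L(\xi)$ is exactly the span of the ``cores'' $E_j$ of the Kronecker blocks (so $\dim L(\xi)=\bb(\g)-\sum_i n_i$), and the bookkeeping for (i)--(iii) goes through as you wrote. Your verification of (iii) --- that on a single Jordan block of half-size $k$ at $t'$ the restriction of any regular $\pi_t(\xi)$ to the $2$-dimensional kernel of $\pi_{t'}(\xi)$ is nondegenerate for $k=1$ and identically zero for $k\ge 2$ --- is the crux, and it is right; the hypothesis $\xi\in U_{\sf sr}$ guarantees $t'\in\{0,\infty\}$, so $\pi(\xi)=\pi_1(\xi)$ is indeed regular on every Jordan block. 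This is essentially how the cited reference proceeds as well.
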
 

\noindent
By Corollary~\ref{cl-4}, $\gZ$ is algebraic over $\gZ_{\times}$, and
it follows from~\eqref{incl} that $\gZ_{\times}\subset\gS(\g)^{\g_0}$. 
Since, being an algebra of invariants, $\gS(\g)^{\g_0}$ is an algebraically closed
subalgebra of $\gS(\g)$, we conclude that
\beq        \label{eq:Z-v-inv-g_0}
            \gZ\subset\gS(\g)^{\g_0} .
\eeq 
By~\cite[Prop.\,1.1]{m-y}, if $\ca$ is a Poisson-commutative subalgebra
of $\gS(\g)^{\g_0}$, then 
\beq        \label{eq:MY19}
   \trdeg\ca\le \bb(\g)-\bb(\g_0)+\rk\g_0=:\bb(\g,\vartheta) . 
\eeq
Note that
$\bb(\g,\vth)\le \bb(\g)$ and the equality occurs if and only if $\g_0$ is abelian.
The most interesting case is that in which the upper bound in \eqref{eq:MY19}
is attained for $\ca=\gZ$.

\begin{thm}       \label{thm-trdeg}
If $\vartheta$ has the property that $\ind\g_{(0)}=\ind\g$, then $\trdeg\gZ=\bb(\g,\vartheta)$. 
\end{thm}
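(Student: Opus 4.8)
The statement asserts that under the hypothesis $\ind\g_{(0)}=\ind\g=l$, the bound in \eqref{eq:MY19} is attained for $\ca=\gZ$. By Corollary~\ref{cl-4}, $\trdeg\gZ=\dim L(\xi)$ for generic $\xi\in U_{\sf srr}$, so it suffices to show $\dim L(\xi)=\bb(\g,\vartheta)=\bb(\g)-\bb(\g_0)+\rk\g_0$ for generic such $\xi$. The first thing I would do is identify the singular lines of the pencil $\mathcal P(\xi)$. Because $\ind\g_{(0)}=l$, a generic $\xi$ lies in $\g^*_{(0),\sf reg}$, so the line $\bbk\pi_0(\xi)$ is \emph{regular}; and the lines $\bbk\pi_t(\xi)$ with $t\ne 0,\infty$ are regular for $\xi\in U_{\sf sr}$ by Proposition~\ref{prop:codim-sing}. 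Unless $\g_0$ is abelian (in which case $\infty\in\BP_{\sf reg}$ by Corollary~\ref{cor:infty=reg}, all lines are regular, and Proposition~\ref{P}(i) gives $\dim L(\xi)=\bb(\g)=\bb(\g,\vartheta)$), the only candidate for a singular line is $\bbk\pi_\infty(\xi)$. So the generic situation is that $\bbk\pi_\infty(\xi)$ is the \emph{unique} singular line in $\mathcal P(\xi)$, and I would invoke Proposition~\ref{P}(iii) with $t'=\infty$, $V=\ker\pi_\infty(\xi)=\g_{(\infty)}^\xi$.

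\textbf{Checking the hypotheses of Proposition~\ref{P}(iii).} Two conditions must be verified for generic $\xi\in U_{\sf srr}$. First, that $\bbk\pi_\infty(\xi)$ is the only singular line: this follows by combining Proposition~\ref{prop:codim-sing} (generic $\xi$ avoids all $\g^*_{(t),\sf sing}$ with $t\ne 0,\infty$), the hypothesis $\ind\g_{(0)}=\ind\g$ (so generic $\xi$ avoids $\g^*_{(0),\sf sing}$), and the fact that a generic $\xi$ with $\xi_0$ regular in $\g_0$ lies in $\g^*_{\infty,\sf reg}$, which is Lemma~\ref{lm:xi}. Wait — Lemma~\ref{lm:xi} says $\xi\in\g^*_{\infty,\sf reg}$, not singular; so to get a genuine singular line one needs $\rk\pi_\infty(\xi)<\rk\pi=l+\dim\g_0-\rk\g_0$ to fail being $l$, i.e. $\dim\g_0>\rk\g_0$. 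Thus when $\g_0$ is non-abelian, $\bbk\pi_\infty(\xi)$ \emph{is} singular (rank $l+\dim\g_0-\rk\g_0>l$ is the generic rank of $\pi_\infty$, so $\infty\in\BP_{\sf sing}$), and it is the only singular line. Second, the condition $\rk(\pi(\xi)|_V)=\dim V-l$ for $V=\ker\pi_\infty(\xi)$: this is exactly the content of Lemma~\ref{rank-ker}, valid for generic $\xi$. Both hypotheses therefore hold on a dense open subset, which I would intersect with $U_{\sf srr}$.

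\textbf{Assembling the count.} With Proposition~\ref{P}(iii) in force,
\[
   \dim L(\xi)=l+\tfrac12\rk\pi_\infty(\xi)=l+\tfrac12\bigl((\dim\g-\ind\g_{(\infty)})\bigr),
\]
and by Theorem~\ref{thm:ind-inf}, $\ind\g_{(\infty)}=\dim\g_0+\rk\g-\rk\g_0$, so $\rk\pi_\infty(\xi)=\dim\g-\dim\g_0-\rk\g+\rk\g_0=\dim\g-\dim\g_0-l+\rk\g_0$. Substituting,
\[
   \dim L(\xi)=l+\tfrac12(\dim\g-\dim\g_0-l+\rk\g_0)=\tfrac{\dim\g+l}{2}-\tfrac{\dim\g_0+l}{2}+\tfrac{\dim\g_0+\rk\g_0}{2}+\tfrac{\rk\g_0}{2}.
\]
A quick rearrangement recognizes this as $\bb(\g)-\bb(\g_0)+\rk\g_0=\bb(\g,\vartheta)$, matching \eqref{eq:MY19}, so $\trdeg\gZ=\bb(\g,\vartheta)$. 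The abelian case handled above gives the same answer since then $\bb(\g_0)=\rk\g_0$ forces $\bb(\g,\vartheta)=\bb(\g)$.

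\textbf{Expected main obstacle.} The genuinely delicate point is not any single computation but confirming that the open conditions ``$\xi\in U_{\sf sr}$'', ``$\xi\in\g^*_{(0),\sf reg}$'', ``$\xi_0\in(\g_0)^*_{\sf reg}$'', and ``the hypothesis of Lemma~\ref{rank-ker} holds'' can be imposed \emph{simultaneously} on a nonempty (hence dense) open set — in particular that $U_{\sf srr}$ itself is nonempty, which relies essentially on $\ind\g_{(0)}=\ind\g$ so that $\g^*_{(0),\sf reg}$ is dense. Everything else is bookkeeping with Proposition~\ref{P}, Theorem~\ref{thm:ind-inf}, and Lemma~\ref{rank-ker}, all of which are already available.
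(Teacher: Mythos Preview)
Your proof is correct and follows essentially the same route as the paper's: reduce to computing $\dim L(\xi)$ for generic $\xi\in U_{\sf srr}$ via Corollary~\ref{cl-4}, split into the cases $\infty\in\BP_{\sf reg}$ (equivalently $\g_0$ abelian, handled by Proposition~\ref{P}{\sf (i)}) and $\infty\in\BP_{\sf sing}$ (handled by Proposition~\ref{P}{\sf (iii)} together with Lemma~\ref{rank-ker} and Theorem~\ref{thm:ind-inf}). Two minor slips to clean up: in your discussion of the singular line you write ``rank $l+\dim\g_0-\rk\g_0$ is the generic rank of $\pi_\infty$'', but that quantity is $\ind\g_{(\infty)}=\dim\ker\pi_\infty(\xi)$, not the rank; and your displayed ``quick rearrangement'' does not actually equal $l+\tfrac12(\dim\g-\dim\g_0-l+\rk\g_0)$ --- the correct identity is simply $\tfrac{\dim\g+l}{2}-\tfrac{\dim\g_0+\rk\g_0}{2}+\rk\g_0$.
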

\begin{proof}
Suppose that $\xi\in U_{\sf srr}$. Then 
$\rk\pi_t(\xi)=\dim\g-l$ if and only if $t\in\BP_{\sf reg}$. 
Since $\trdeg\gZ\le\bb(\g,\vartheta)$, it suffices to show that $\dim L(\xi)=\bb(\g,\vartheta)$ whenever 
$\xi\in U_{\sf srr}$ is generic. 

Suppose first that $\infty\in\BP_{\sf reg}$. Then $\rk\pi_t(\xi)=\dim\g-l$ for 
any $t\in\BP$ and,
by Proposition~\ref{P}{\sf (i)}, we obtain $\dim L(\xi)=\bb(\g)$. 
And the equality $\ind\g=\ind\g_{(\infty)}$ means that $\g_0$ is abelian, hence
$\bb(\g)=\bb(\g,\vartheta)$.

Suppose  that $\infty\not\in\BP_{\sf reg}$. Then $\pi_{\infty}(\xi)$ spans the unique singular line in the 
pencil $c\pi_t(\xi)$. By Lemma~\ref{rank-ker}, there is a dense open subset $U\subset \g^*$  such that,
for any $\eta\in U$ and $V=\ker\pi_{\infty}(\eta)$, we have  
$\rk(\pi(\eta)|_V)=\dim V-l$. If $\xi\in U_{\sf srr}\cap U$, then conditions~{\sf (ii)} and {\sf (iii)} of Proposition~\ref{P}
are satisfied for $\pi_t(\xi)$. Thus, in this case, 
\[
\dim L(\xi)=l+\frac{1}{2}\rk\pi_\infty=l+\frac{1}{2}(\dim\g-\dim\g_0-l+\rk\g_0)=
\bb(\g)-\bb(\g_0)+\rk\g_0=\bb(\g,\vartheta),
\]
since $\rk\pi_\infty(\xi)=\rk\pi_\infty=\dim\g-\ind\g_{(\infty)}$ and $\ind\g_{(\infty)}$ is computed in
Theorem~\ref{thm:ind-inf}. 
\end{proof}

\subsection{Properties of $\g_{(0)}$}   \label{subs:g-null}
Study of Lie algebras $\g_{(0)}$ associated with arbitrary periodic automorphisms of $\g$ have been initiated in~\cite{p09}, where they are called {\it cyclic contractions\/} or $\BZ_{k}$-{\it contractions}.
In~\cite{p09}, these algebras are denoted by $\g\langle k\rangle_0$, where $k=\oth$, because of their
interpretation as the fixed-point subalgebras of an extension of $\vth$ to an automorphism of $k$-th 
Takiff algebra modelled on $\g$. 
(We discuss Takiff algebras in Section~\ref{sect:cyc}.)

The case in which $\oth\ge 3$ appears to be more difficult than that of involutions. If $\oth=2$, then 
$\ind\g_{(0)}=\ind\g$ and $\g_{(0)}$ has the {\sl codim}--$2$ property~\cite{p07}. Whereas, 
for $\oth\ge 3$, it can happen that $\g_{(0)}$ does not have the {\sl codim}--$2$ property, and the
equality $\ind\g_{(0)}=\ind\g$ is only known under certain constraints.

To state some sufficient conditions, we first recall some results of {E.B.\,Vinberg}~\cite{vi76}.
Associated with $\BZ_m$-grading $\g=\bigoplus_{i\in\BZ_m}\g_i$, one has the linear action of $G_0$
on $\g_1$. Then 
\begin{itemize}
\item the algebra of invariants $\bbk[\g_1]^{G_0}$ is a polynomial (free) algebra;
\item the morphism $\pi : \g_1 \to \g_1\md G_0=\spe(\bbk[\g_1]^{G_0})$ is flat and surjective;
\item $\pi^{-1}\pi(0)=\cN\cap\g_1$ and each fibre of $\pi$ contains finitely many $G_0$-orbits. 
\end{itemize}
(It is worth mentioning that this is only a tiny fraction of fundamental results obtained in that great 
paper.) The fibre $\pi^{-1}\pi(0)$ 
is customary called the {\it null-cone\/} (w.r.t. the $G_0$-action on $\g_1$) and we denote it by $\cN_1$.
\begin{df}     \label{df:N-reg}
Following~\cite{p09}, we say that 

(1) \ $\vth$ is $\gS$-{\it regular}, if $\g_1$ contains a regular semisimple element of $\g$;
 
(2) \ $\vth$ is $\gN$-{\it regular}, if $\g_1$ (i.e., $\cN_1$) contains a regular nilpotent element of $\g$;

(3) \ $\vth$ is {\it very $\gN$-regular}, if each irreducible component of $\cN_1$ contains a regular nilpotent element of $\g$.
\end{df}
If $\oth=2$, then it follows from~\cite{kr71} that properties (1)--(3) are equivalent.  But this is not always 
the case if $\oth\ge 3$. It can happen that $\vth$ is $\gS$-{regular}, but not $\gN$-{regular}; and vice 
versa. It can also happen that $\cN_1$ is reducible and some irreducible components of $\cN_1$ 
are not reduced (in the scheme-theoretic sense). Examples of automorphisms of order ${\ge} 3$ such 
that good properties of Definition~\ref{df:N-reg} hold are given in Examples~5.9 and 5.10 in~\cite{p09}.
The following assertion provides sufficient conditions for some good properties of $\g_{(0)}$ to hold.

\begin{thm}[{\cite[Sect.\,5]{p09}}]     \label{thm:from-09}
Let $\vth\in\mathsf{Aut}(\g)$ be of finite order. 
\begin{itemize}
\item If\/ $\g_1\cap\g_{\sf reg}\ne\varnothing$, then $\ind\g_{(0)}=\ind\g$;
\item If\/ $\vth$ is both $\gS$-{regular} and {very $\gN$-regular}, then
 \begin{itemize}
 \item $\g_{(0)}$ has the {\sl codim}--$2$-property;
 \item $\cz_0$ is 
  is freely generated by $H^\bullet_1,\dots,H^\bullet_l$, $l=\rk\g$, where $\{H_1,\dots,H_l\}$ is {\bf any} set of homogeneous generators of\/ $\gS(\g)^\g=\cz_1$ that consists of $\vth$-eigenvectors. 
  \end{itemize}
\end{itemize}
\end{thm}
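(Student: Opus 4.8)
The plan is to prove the two bullet points of Theorem~\ref{thm:from-09} by combining Vinberg's structure theory for the $G_0$-action on $\g_1$ with the ``component principle'' (Proposition~\ref{prop:bullet}) applied to the contraction $\g \rightsquigarrow \g_{(0)}$. For the first assertion, I would argue that $\ind\g_{(0)}\le\ind\g=l$ by exhibiting a point of $\g_{(0)}^*$ with small stabiliser. Recall from Proposition~\ref{prop:graded-limits} that $\g_{(0)}[i]=\g_i$ for $0\le i\le m-1$; choose $x\in\g_1\cap\g_{\sf reg}$, so $\dim\g^x=l$. The hypothesis $\g_1\cap\g_{\sf reg}\ne\varnothing$ lets us pick such $x$, and (identifying $\g^*\simeq\g$ via $\varkappa$) one checks via the graded structure of the coadjoint action of $\g_{(0)}$ that the stabiliser $\g_{(0)}^{\xi}$ of the corresponding $\xi$ has dimension exactly $l$: the key point is that $[\g_j, x]_{0}$ is computed inside $\g$ (no wrap-around, since $1+j\le m-1$ for $j\le m-2$, and for $j=m-1$ the bracket is killed), so the equations $[\g_{j}, x]_0 = 0$ coincide with the $\g$-equations degree by degree, except in the top degree where an extra condition is dropped — but that condition only affects $\g_{m-1}$, and one verifies it is automatically satisfied on the stabiliser because $x$ is regular. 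Combined with $\ind\g_{(0)}\ge\ind\g$ (semicontinuity, already noted), this gives equality.

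For the second bullet, the $\gS$-regularity and very $\gN$-regularity assumptions are exactly what is needed to apply Proposition~\ref{prop:bullet} and get that $\cz_0$ is free. The strategy is: let $\{H_1,\dots,H_l\}$ be homogeneous generators of $\gS(\g)^{\g}$ consisting of $\vth$-eigenvectors (these exist because $\vth$ acts on the graded ring $\gS(\g)^{\g}$ preserving degree, and we may diagonalise in each graded piece). By Proposition~\ref{prop:bullet}, each $H_j^\bullet\in\cz(\g_{(0)})=\cz_0$. First I would compute the sum of degrees: for a $\vth$-eigenvector $H_j$ of degree $d_j$, the component $H_j^\bullet$ has $\vp$-weight that one can read off, and $\sum_j \deg(H_j^\bullet)$ — counting with the grading shift — should come out to $\bb(\g)$, the value forced by freeness. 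The main input here is $\gS$-regularity: evaluating $H_j^\bullet$ at (the $\xi$ attached to) a regular semisimple $h\in\g_1$, one sees the top $\vp$-component of $H_j$ does not vanish, and a dimension/Jacobian count at $h$ shows $d H_1^\bullet,\dots,d H_l^\bullet$ are linearly independent there. Then, to upgrade ``algebraically independent'' to ``freely generate $\cz_0$'', I would invoke the codim-$2$ property of $\g_{(0)}$ together with a standard criterion (as in Panyushev--Yakimova's earlier work, e.g. the argument around \cite{ko63,contr}): if $\g_{(0)}$ has the codim-$2$ property and one has $l$ algebraically independent invariants whose degrees sum to $\bb(\g_{(0)})=\bb(\g)$, then they generate $\gS(\g_{(0)})^{\g_{(0)}}$ freely (this is the Kostant-type argument: the complement of the regular locus has codimension $\ge 2$, so invariants extend, and the sum-of-degrees bound is saturated exactly when the algebra is polynomial on them).

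So the skeleton is: (a) establish the codim-$2$ property of $\g_{(0)}$ under $\gS$- and very-$\gN$-regularity — this is where the structure of the null-cone $\cN_1$ enters, since one needs that over each component of $\cN_1$ the generic fibre behaves well, and that every component meeting $\g_{\sf reg}$ forces the singular locus down; (b) produce the candidate generators $H_j^\bullet$ via Proposition~\ref{prop:bullet}; (c) prove their algebraic independence using $\gS$-regularity (evaluate at a regular semisimple element of $\g_1$); (d) compute $\sum\deg H_j^\bullet = \bb(\g)$; (e) conclude freeness by the codim-$2$ polynomiality criterion. I expect step (a), the codim-$2$ property, to be the main obstacle: it is the place where the hypotheses are used in an essential, non-formal way, and it requires genuinely understanding the geometry of $\cN\cap\g_1$ and its irreducible components — in particular ruling out that $\g_{(0)}^*\setminus\g_{(0),\sf reg}^*$ contains a divisor, which amounts to controlling the locus where the differentials $dH_j^\bullet$ fail to span. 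The other steps are comparatively routine once (a) is in hand, being direct transcriptions of the Kostant-style argument that already appears in \cite{contr} and \cite{OY}.
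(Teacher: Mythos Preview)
The paper does not prove this theorem; it is quoted verbatim from~\cite[Sect.\,5]{p09} (note the citation in the theorem header). So there is no ``paper's own proof'' to compare against --- the authors simply import the result as a black box from Panyushev's earlier work.

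That said, your sketch is broadly the right shape for how~\cite{p09} actually argues, with one correction and one caveat. For the first bullet, your computation of the $\g_{(0)}$-stabiliser of $\xi\in\g_{m-1}^*$ (corresponding to $x\in\g_1\cap\g_{\sf reg}$) is slightly garbled: no condition is ``dropped in top degree''. If you trace through carefully, for $\eta\in\g_j$ the stabiliser condition reads $\varkappa(x,[\eta,z]_0)=0$ for all $z$, and the only $z$-component that matters is $z\in\g_{m-1-j}$, for which $j+(m{-}1{-}j)=m{-}1<m$ always holds, so $[\eta,z]_0=[\eta,z]$. Hence the condition is exactly $[x,\eta]=0$ in $\g$, for every $j$, and $\g_{(0)}^\xi=\g^x$ on the nose. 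No special pleading for $j=m-1$ is needed.

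For the second bullet, your plan (a)--(e) matches the logic of~\cite{p09}, and you have correctly identified (a), the {\sl codim}--$2$ property, as the substantive step. The one thing your outline understates is that proving (a) requires analysing the $G_{(0)}$-orbit structure on $\g_{(0)}^*$ fibre-by-fibre over $\g_1\md G_0$, and this is where both $\gS$-regularity (to control generic fibres) and very $\gN$-regularity (to control the null-cone, component by component) are genuinely used. Your step (c) as written --- ``evaluate at a regular semisimple element of $\g_1$'' --- is too quick: what one actually shows is that the differentials $\textsl{d}H_j^\bullet$ are linearly independent at such a point, which uses the Kostant regularity criterion~\eqref{eq:ko-re-cr} together with the graded structure. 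Steps (d) and (e) then go through via the criterion recorded later in the present paper as Theorem~\ref{thm:kot14}.
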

\noindent

The assumptions of Theorem~\ref{thm:from-09} are not always satisfied, and
the property of being ``very $\gN$-regular'' is difficult to check directly. Some methods for handling these
properties and related examples can be found in Section~5 in~\cite{p09}.
\\ \indent
There is a nice special case, where all properties of Definition~\ref{df:N-reg} hold and
Theorem~\ref{thm:from-09} applies. Namely, let $\g$ be the direct sum of $m$ copies of a semisimple
Lie algebra $\h$ and $\vth$ a cyclic permutation of the summands; hence $\oth=m$. Here we obtain a 
complete description of the Poisson-commutative subalgebra $\gZ$, see Section~\ref{sect:cyc}.

%%%%%%%%%%%    Section  %%%%%%%%%%%%%%%
\section{On algebraically independent generators} 
\label{sect:free}

\noindent
If it is known that $\trdeg\gZ=\bb(\g,\vth)$, then it becomes a meaningful task to compute the minimal number of generators of $\gZ$ or $\gZ_\times$.

Let $\{H_1,\dots,H_l\}$  be a set of homogeneous algebraically independent generators of
$\gS(\g)^\g$ and $d_i=\deg H_i$. Then $\sum_{i=1}^l d_i=\bb(\g)$.  Recall from 
Sections~\ref{subs:contr-&-inv} and \ref{subs:periodic} that associated with $\vth$, we have the
polynomial homomorphism $\vp: \bbk^\star\to {\rm GL}(\g)$ and its extension to invertible linear transformations of $\gS^j(\g)$ for all $j$. Therefore, each $H_j$ decomposes as 
$H_j=\sum_{i\ge 0} H_{j,i}$, where $\varphi_s(H_j)=\sum_{i\ge 0} s^i H_{j,i}$. The polynomials
$H_{j,i}$ are called {bi-homogeneous components} of $H_j$. By definition, the $\vp$-{\it degree\/} of
$H_{j,i}$ is $i$, also denoted by $\deg_\vp H_{j,i}$.
\\ \indent
Let $H_j^\bullet$ be the nonzero bi-homogeneous component of $H_j$ with
maximal $\vp$-degree. Then $\deg_{\vp}\! H_j=\deg_{\vp}\! H_j^\bullet$ and we set
$d_j^\bullet=\deg_{\vp}\! H_j^\bullet$.
By Proposition~\ref{prop:compat}{\sf (i)}, $\cz_{t}$ is generated by 
$\vp_s^{-1}(H_1),\dots,\vp_s^{-1}(H_l)$, where $t=s^m\in\bbk^\star$. 
By the standard argument with the Vandermonde determinant, we then conclude that 
$\gZ_{\times}$ is generated by all bi-homogeneous components of $H_1,\dots, H_l$, i.e.,
\beq     \label{eq:alg-kr}
               \gZ_{\times}=\mathsf{alg}\langle H_{j,i} \mid 1\le j\le l, 0\le i \le d_j^\bullet \rangle.
\eeq

\begin{df}
Let us say that $H_1,\dots,H_l$ is a {\it good generating system} in $\gS(\g)^\g$
({\sf g.g.s.}\/ {\it for short}) for $\vartheta$, if $H_1^\bullet,\dots,H_l^\bullet$ are
algebraically independent. Then we also say that $\vartheta$ {\it admits\/} a {g.g.s.}
\end{df}
\noindent
The property of being `good' really depends on a generating system. For instance, for $\g=\sln$ and 
$m=2$,  the coefficients of the characteristic polynomial of $A\in\sln$ yield a {\sf g.g.s.}, while the 
polynomials $\tr(A^i)$, $i=2,\dots,n$ do not provide a {\sf g.g.s.}, see \cite{p07}.

The importance of {\sf g.g.s.} is manifestly seen in the following fundamental result.

\begin{thm}[{\cite[Theorem\,3.8]{contr}}]    \label{thm:kot14}
Let $H_1,\dots,H_l$ be an arbitrary set of homogeneous algebraically independent generators of\/ $\gS(\g)^\g$. Then
\begin{itemize}
\item[\sf (i)] \ $\sum_{j=1}^l \deg_{\vp}\! H_j\ge \sum_{j=1}^{m-1} j\dim\g_j=:D_\vartheta$\,;
\item[\sf (ii)] \  $H_1,\dots,H_l$ is a {\sf g.g.s.} if and only if\/ $\sum_{j=1}^l \deg_{\vp}\! H_j=D_\vartheta$\,;
\item[\sf (iii)] \  if\/ $\g_{(0)}$ has the {\it codim}--$2$ property, $\ind\g_{(0)}=l$, and  $H_1,\dots,H_l$ is a {\sf g.g.s.}, then
$\cz_0=\gS(\g_{(0)})^{\g_{(0)}}$ is a polynomial algebra freely generated by 
$H_1^\bullet,\dots,H_l^\bullet$. 
\end{itemize}
\end{thm}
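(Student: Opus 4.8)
The plan is to work throughout with the increasing $\BN$-filtration of $\gS(\g)$ by $\vp$-degree, where $\vp\colon\bbk^\star\to\GL(\g)$ is as in Section~\ref{subs:periodic}. By Proposition~\ref{prop:compat}{\sf (iii)} one has $\{\,\,,\,\}=\{\,\,,\,\}_{0}+\{\,\,,\,\}_{\infty}$ with $\{\,\,,\,\}_{0}$ preserving the $\vp$-grading and $\{\,\,,\,\}_{\infty}$ lowering the $\vp$-degree by $m$; hence this filtration is compatible with the Lie--Poisson bracket, and its associated graded Poisson algebra is $(\gS(\g_{(0)}),\{\,\,,\,\}_{0})$. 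In particular Proposition~\ref{prop:bullet} applies and gives $H_j^\bullet\in\cz_0=\gS(\g_{(0)})^{\g_{(0)}}$ for every $j$, where $H_j^\bullet$ is bihomogeneous of ordinary degree $d_j$ and $\vp$-degree $d_j^\bullet$. Since $\gS(\g)$ is a domain, the leading $\vp$-term of a product is the product of the leading $\vp$-terms, so $\mathsf{alg}\langle H_1^\bullet,\dots,H_l^\bullet\rangle=\bbk[H_1^\bullet,\dots,H_l^\bullet]\subset\cz_0$, and $H_1,\dots,H_l$ is a {\sf g.g.s.} precisely when this polynomial subalgebra has transcendence degree $l$.

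I would establish {\sf (iii)} first, since it is essentially formal once the right criterion is available. Assume $\g_{(0)}$ has the {\sl codim}--$2$ property, $\ind\g_{(0)}=l$, and $H_1^\bullet,\dots,H_l^\bullet$ are algebraically independent. Then $\bbk[H_1^\bullet,\dots,H_l^\bullet]$ is a polynomial subalgebra of $\gS(\g_{(0)})^{\g_{(0)}}$ of transcendence degree $l$; as $\trdeg\gS(\g_{(0)})^{\g_{(0)}}\le\ind\g_{(0)}=l$ by~\eqref{eq:neravenstvo-ind}, the algebra $\gS(\g_{(0)})^{\g_{(0)}}$ is algebraic over it. To upgrade this to equality I would invoke the Kostant-type freeness criterion: if a Lie algebra $\q$ has the {\sl codim}--$2$ property and $f_1,\dots,f_{\ind\q}\in\gS(\q)^\q$ are algebraically independent homogeneous elements with $\sum_i\deg f_i=\bb(\q)$, then $f_1,\dots,f_{\ind\q}$ freely generate $\gS(\q)^\q$; indeed the degree equality forces $\textsl{d}_\xi f_1\wedge\dots\wedge\textsl{d}_\xi f_{\ind\q}\ne 0$ off a subset of codimension $\ge 2$, hence off $\q^*_{\sf sing}$, after which one runs the standard differential argument underlying~\eqref{eq:ko-re-cr}. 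It remains to check the numerology: $\deg H_j^\bullet=\deg H_j=d_j$, so $\sum_j\deg H_j^\bullet=\sum_j d_j=\bb(\g)$, while $\bb(\g_{(0)})=\tfrac12(\dim\g_{(0)}+\ind\g_{(0)})=\tfrac12(\dim\g+l)=\bb(\g)$ by hypothesis; thus the criterion applies with $\q=\g_{(0)}$ and yields $\cz_0=\bbk[H_1^\bullet,\dots,H_l^\bullet]$.

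For {\sf (i)} and {\sf (ii)} the aim is to identify $D_\vartheta=\sum_{j=1}^{m-1}j\dim\g_j$ with a degree intrinsic to $\g_{(0)}$. Fixing a $\vp$-eigenbasis $\{x_i\}$ of $\g$ with $x_i\in\g_{w_i}$, one has $\sum_i w_i=D_\vartheta$, and $D_\vartheta=\tfrac m2(\dim\g-\dim\g_0)$ after using $\dim\g_j=\dim\g_{m-j}$. In this basis the structure matrix of $\{\,\,,\,\}_{0}$ has $(i,j)$-entry $\langle\xi,\{x_i,x_j\}_{0}\rangle$, which is $\vp$-homogeneous of $\vp$-degree $w_i+w_j$ when $w_i+w_j<m$ and vanishes otherwise; consequently every nonzero maximal Pfaffian minor of this matrix is a $\vp$-homogeneous semiinvariant of $\g_{(0)}$ of controlled $\vp$-degree. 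On the other hand, $\textsl{d} H_1\wedge\dots\wedge\textsl{d} H_l$ is nonzero on $\g^*_{\sf reg}$ by Kostant, its leading $\vp$-component is a nonzero covariant of $\g_{(0)}$, and that component equals $\textsl{d} H_1^\bullet\wedge\dots\wedge\textsl{d} H_l^\bullet$ --- of $\vp$-degree exactly $\sum_j d_j^\bullet$ --- precisely when $H_1^\bullet,\dots,H_l^\bullet$ are algebraically independent, and has strictly smaller $\vp$-degree otherwise; this dichotomy is the sharpness statement {\sf (ii)} once {\sf (i)} is in hand. The inequality {\sf (i)} itself should then follow by comparing these two objects and reading $D_\vartheta$ off the $\vp$-grading of $\g_{(0)}$; a concrete ingredient is that, since the coordinates on $\g_{m-1}^*$ all have $\vp$-degree $m-1$ and $\mathfrak n=\g_1\oplus\dots\oplus\g_{m-1}$ acts trivially on the $\g_{m-1}^*$-part of $\g_{(0)}^*$, Vinberg's polynomial algebra $\bbk[\g_1]^{G_0}\cong\bbk[\g_{m-1}^*]^{G_0}$ embeds into $\cz_0$, each generator of ordinary degree $p$ contributing $\vp$-degree $(m-1)p$.

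The step I expect to be the main obstacle is {\sf (i)}: converting ``the leading $\vp$-terms of the Casimirs of $\g$ lie in $\cz_0$'' into the sharp bound $\sum_j d_j^\bullet\ge D_\vartheta$ requires pinning down the $\vp$-degree of the fundamental covariant $\textsl{d} H_1^\bullet\wedge\dots\wedge\textsl{d} H_l^\bullet$ of $\g_{(0)}$ (equivalently, of a suitable semiinvariant of the coadjoint representation of $\g_{(0)}$), and this is where the full strength of Vinberg's theory of $\vartheta$-groups --- polynomiality of $\bbk[\g_1]^{G_0}$, flatness and surjectivity of $\g_1\to\g_1\md G_0$, and the description $\pi^{-1}\pi(0)=\cN\cap\g_1$ of the null-cone --- has to be used, together with a somewhat delicate degree bookkeeping running over all of $\g_1,\dots,\g_{m-1}$. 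By contrast, the filtration formalism and part {\sf (iii)} (given the {\sl codim}--$2$ freeness criterion) are routine.
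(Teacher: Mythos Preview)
This theorem is quoted verbatim from \cite[Theorem\,3.8]{contr} and carries no proof in the present paper; there is therefore nothing in the paper to compare your proposal against. What follows are remarks on your outline relative to the argument in~\cite{contr}.

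Your treatment of {\sf (iii)} is correct and is exactly the standard route: algebraic independence of the $H_j^\bullet$ plus the degree identity $\sum_j\deg H_j^\bullet=\bb(\g)=\bb(\g_{(0)})$ feeds into the freeness criterion (this is the criterion of \cite[Theorem~1.2]{ppy}, which underlies \cite[Theorem~3.8(iii)]{contr}), and the {\sl codim}--$2$ hypothesis is precisely what that criterion needs.

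For {\sf (i)}--{\sf (ii)} you have identified the right object, namely the wedge $J:=\textsl{d}H_1\wedge\dots\wedge\textsl{d}H_l$, but the argument you sketch does not close, and your expectation that Vinberg's $\vth$-group theory is the missing ingredient is misplaced. The result in~\cite{contr} is proved for \emph{arbitrary} polynomial contractions of $\g$, not only those arising from periodic automorphisms, so none of the specific structure of $\bbk[\g_1]^{G_0}$ or of $\cN\cap\g_1$ enters. The actual mechanism is a pure $\vp$-degree count on $J$: writing $J$ in a $\vp$-eigenbasis $x_1,\dots,x_n$ of $\g$ (with $x_i\in\g_{w_i}$), each coefficient is a minor of the Jacobian matrix $(\partial H_j/\partial x_i)$, and the entry $\partial H_j/\partial x_i$ has $\vp$-degree $d_j^\bullet-w_i$. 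Since $\g$ is semisimple, $J$ is nonzero off a set of codimension~$\ge 3$ (Kostant), so the \emph{top} $\vp$-component $J^\bullet$ is nonzero. One then computes $\deg_\vp J$ by pairing with the dual volume element and obtains $\deg_\vp J=\sum_j d_j^\bullet-\sum_i w_i=\sum_j d_j^\bullet-D_\vartheta$; the fact that $J^\bullet\ne 0$ as a polynomial forces $\deg_\vp J\ge 0$, which is {\sf (i)}. Equality $\deg_\vp J=0$ means $J^\bullet$ is a nonzero constant, which happens exactly when $J^\bullet=\textsl{d}H_1^\bullet\wedge\dots\wedge\textsl{d}H_l^\bullet\ne 0$, i.e.\ when the $H_j^\bullet$ are algebraically independent; this is {\sf (ii)}. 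So the ``delicate degree bookkeeping'' you anticipate is in fact a one-line identity, and no input from $\vth$-group theory is required.
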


Recall that  $\dim\g_j=\dim\g_{m-j}$ for any $0 < j\le m-1$.  Hence
\[
    D_\vartheta=\sum_{j=1}^{m-1} j\dim\g_j=\frac{1}{2}\sum_{j=1}^{m-1} m \dim\g_j = \frac{m}{2} (\dim\g-\dim\g_0).
\]
Not every $i\in \{0,1,\dots, d_j^\bullet\}$ provides a {\bf nonzero} bi-homogeneous
component $H_{j,i}$. To make this precise, we first consider the case of inner automorphisms, which is technically easier.

\begin{thm}   \label{thm:main3-2}
Suppose that $\vartheta\in {\sf Aut}(\g)$ is inner and admits a {\sf g.g.s.} Let $H_1,\dots,H_l$ be such a {\sf g.g.s.} in $\gS(\g)^\g$ and
$d_j^\bullet=\deg_{\vp}\! H_j$. Then
\begin{itemize}
\item[\sf (i)]  each $d^\bullet_j\in m\BZ$ \ and $H_{j,i}\ne 0$ only if\/ $i\in m\Z$ and $0\le i\le d_j^\bullet$;
\item[\sf (ii)]  if\/ $\ind\g_{(0)}=l$, then
$H_{j,i}\ne 0$ for every $i\in m\Z$ with $0\le i\le d_j^\bullet$ and
the polynomials $\{H_{j,i} \mid j=1,\dots,l; \ \& \ i=0,m,\dots, d^\bullet_j \}$ are algebraically independent 
(and generate $\gZ_{\times}$).
\end{itemize}
\end{thm}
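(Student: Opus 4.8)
The plan is to handle (i) by a one-line eigenvalue computation and (ii) by matching the number of surviving bi-homogeneous components against $\bb(\g,\vartheta)$ via the transcendence-degree results already established.

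\emph{Part (i).} The starting point is the identity $\vp_\zeta=\vartheta$: for $x\in\g_j$ one has $\vp_\zeta(x)=\zeta^j x=\vartheta(x)$, and both sides extend to the same algebra automorphism of $\gS(\g)$. Since $\vartheta$ is inner, i.e.\ $\vartheta\in\Ad(G)$, it fixes $\gS(\g)^\g=\gS(\g)^G$ pointwise; in particular $\vp_\zeta(H_j)=H_j$ for every $j$. Now write $H_j=\sum_{i\ge0}H_{j,i}$ with $\vp_s(H_{j,i})=s^iH_{j,i}$. Components with distinct $i$ lie in distinct $\vp(\bbk^\star)$-eigenspaces of $\gS^{d_j}(\g)$, so by uniqueness of the eigenspace decomposition the equality $\vp_\zeta(H_j)=H_j$ forces $\zeta^i H_{j,i}=H_{j,i}$, i.e.\ $\zeta^i=1$ (equivalently $m\mid i$) whenever $H_{j,i}\ne0$. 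Taking the largest such $i$ gives $d_j^\bullet\in m\BZ$, which is (i).

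\emph{Part (ii).} First I would count admissible pairs. By (i), for each $j$ the nonzero $H_{j,i}$ occur only for $i\in\{0,m,2m,\dots,d_j^\bullet\}$, a set of $d_j^\bullet/m+1$ indices, so there are $\sum_{j=1}^l(d_j^\bullet/m+1)=\tfrac1m\sum_j d_j^\bullet+l$ admissible components in all. Because $H_1,\dots,H_l$ is a {\sf g.g.s.}, Theorem~\ref{thm:kot14}(ii) gives $\sum_j d_j^\bullet=\sum_j\deg_{\vp}\!H_j=D_\vartheta=\tfrac m2(\dim\g-\dim\g_0)$, so this number equals $\tfrac12(\dim\g-\dim\g_0)+l$. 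Since $\vartheta$ is inner of finite order, $\g_0=\g^\vartheta$ contains a Cartan subalgebra of $\g$ (see \cite[Chap.\,8]{kac}), whence $\rk\g_0=\rk\g=l$; feeding this into the definition \eqref{eq:MY19} of $\bb(\g,\vartheta)$ gives $\bb(\g,\vartheta)=\bb(\g)-\bb(\g_0)+l=\tfrac{\dim\g+l}{2}-\tfrac{\dim\g_0+l}{2}+l=\tfrac12(\dim\g-\dim\g_0)+l$. Hence the number of admissible components is exactly $\bb(\g,\vartheta)$.

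To finish, note that $\gZ_{\times}$ is generated by the admissible $H_{j,i}$ by \eqref{eq:alg-kr} together with (i), and that $\trdeg\gZ_{\times}=\trdeg\gZ=\bb(\g,\vartheta)$ by Corollary~\ref{cl-4} and Theorem~\ref{thm-trdeg} (applicable since the hypothesis $\ind\g_{(0)}=l=\ind\g$ holds). As $\gS(\g)$ is a domain, an algebra generated by $N$ elements and of transcendence degree $N$ must have those generators algebraically independent, and in particular all of them nonzero: dropping a vanishing generator, or using a polynomial relation to eliminate one, would present $\gZ_{\times}$ with fewer than $N$ algebra generators and so lower its transcendence degree below $N$. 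Applying this with $N=\bb(\g,\vartheta)$ yields (ii). I expect the only delicate point to be purely the bookkeeping: the component count must come out to $\bb(\g,\vartheta)$ on the nose, and it does so precisely because $\rk\g_0=\rk\g$ for inner $\vartheta$. This is the essential use of the inner hypothesis beyond (i), and it explains why the general (in particular outer) case, where some components may genuinely vanish and $\rk\g_0<\rk\g$, cannot be settled by a dimension count alone and must be argued separately.
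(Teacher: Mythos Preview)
Your proof is correct and follows essentially the same route as the paper's: part~(i) is the eigenvalue computation $\vartheta(H_{j,i})=\zeta^i H_{j,i}$ combined with $\vartheta(H_j)=H_j$ for inner $\vartheta$, and part~(ii) is the count of admissible components against $\bb(\g,\vartheta)$ via Theorem~\ref{thm-trdeg} and Corollary~\ref{cl-4}. The only difference is that you are more explicit than the paper in justifying $\vartheta(H_j)=H_j$ (via $\vp_\zeta=\vartheta$ and the action of $\Ad(G)$ on invariants) and in spelling out why $N$ generators with transcendence degree $N$ forces algebraic independence and nonvanishing.
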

\begin{proof}
{\sf (i)} Since $\vartheta$ is inner, $\rk\g=\rk\g_0$ and $\vartheta(H_j)=H_j$ for all $j$. 
On the other hand, $\vartheta(H_{j,i})=\zeta^{i}H_{j,i}$. This proves everything.

{\sf (ii)} It follows from {\sf (i)} that the number of nonzero bi-homogeneous components of $H_j$ is at
most $(d_j^\bullet/m) +1$. Hence the total number of nonzero bi-homogeneous components of all $H_j$
is at most
\[
   \sum_{j=1}^l  \left(\frac{d_j^\bullet}{m} +1\right)=\frac{D_\vartheta}{m}+l=
   \frac{1}{2}(\dim\g-\dim\g_0)+l=\bb(\g)-\bb(\g_0)+\rk\g_0=\bb(\g,\vth),   
\]
where the equality $\rk\g=\rk\g_0$ is used. On the other hand,
the hypothesis $\ind\g_{(0)}=l$ guarantee us that $\trdeg\gZ_\times=\trdeg\gZ_{\times}=
\bb(\g,\vth)$,
cf. Theorem~\ref{thm-trdeg} and Corollary~\ref{cl-4}. As
the bi-homogeneous components $\{H_{j,i}\}$ generate $\gZ_{\times}$~\eqref{eq:alg-kr},   
all $H_{j,i}$ with $i/m\in\BZ$ must be nonzero and algebraically independent.
\end{proof}

With extra technicalities, the same idea works for the arbitrary automorphisms as well. Let 
$\vth\in\Aut(\g)$ be an arbitrary automorphism of order $m$. Since $\vartheta$ acts on $\gS(\g)^{\g}$, 
there is a set of homogeneous generators $\{H_j\}\subset\gS(\g)^\g$ such that each $H_j$ is an 
eigenvector of $\vartheta$, i.e., $\vth(H_j)=\zeta^{r_j} H_j$ for some $r_j\in\BZ$.  However, we need a 
set of free generators that simultaneously is a {\sf g.g.s}{.} and consists of $\vartheta$-eigenvectors.
For $m=2$, this is proved in~\cite[Lemma\,3.4]{OY}. The following is an adaptation of that argument
to any $m\ge 2$.

\begin{lm}        \label{kosiks-lemma}
If $\vth$ admits a {\sf g.g.s.}, then there is also a {\sf g.g.s.} that consists of $\vth$-eigenvectors.
\end{lm}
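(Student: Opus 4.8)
The plan is to start with an arbitrary g.g.s.\ $\{F_1,\dots,F_l\}$ in $\gS(\g)^\g$ and to modify it, one generator at a time, into a g.g.s.\ consisting of $\vth$-eigenvectors, while keeping the algebra $\gS(\g)^\g$ generated. Since $\vth$ is a finite-order semisimple operator on each graded piece $\gS^d(\g)^\g$, every invariant decomposes uniquely into a sum of $\vth$-eigencomponents; the issue is purely that such eigencomponents need not be algebraically independent, and a priori need not even generate. First I would recall, using Theorem~\ref{thm:kot14}(ii), that the g.g.s.\ property is equivalent to the numerical equality $\sum_j \deg_\vp F_j = D_\vth$, and that in general $\sum_j \deg_\vp F_j \ge D_\vth$ for \emph{any} homogeneous free generating system. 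The key point is that $\deg_\vp F_j^\bullet = \deg_\vp F_j$ and that $\deg_\vp$ of a sum is at most the maximum of the $\deg_\vp$'s of the summands, with the top bi-homogeneous component $F_j^\bullet$ being the ``leading term''. I would also note that $\vth$ \emph{preserves} the $\vp$-grading filtration, so the $\vp$-degree is a $\vth$-stable notion and one may pick $F_j^\bullet$ to be itself a $\vth$-eigenvector by replacing $F_j^\bullet$ with its eigencomponent of top $\vp$-degree if necessary.

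The heart of the argument is an induction producing eigenvectors $H_1,\dots,H_l$ with $H_j^\bullet = F_j^\bullet$ (up to scalar), hence with $\sum_j \deg_\vp H_j = \sum_j \deg_\vp F_j = D_\vth$, which by Theorem~\ref{thm:kot14}(ii) forces $H_1,\dots,H_l$ to again be a g.g.s.; algebraic independence of the $H_j$ then follows because it already holds for their leading terms $F_j^\bullet$ (a standard ``leading-term'' argument: a polynomial relation among the $H_j$ would produce, by comparing top $\vp$-degree components, a relation among the $F_j^\bullet$). So I would focus on the construction. Order the generators by degree. Suppose $F_1,\dots,F_{k-1}$ have already been replaced by $\vth$-eigenvectors $H_1,\dots,H_{k-1}$ with the same leading terms, and that together with $F_k,\dots,F_l$ they still freely generate $\gS(\g)^\g$. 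Write the decomposition $F_k = \sum_r F_k^{(r)}$ of $F_k$ into $\vth$-eigencomponents (each $\vth(F_k^{(r)}) = \zeta^r F_k^{(r)}$); these all lie in the subalgebra $\bbk[H_1,\dots,H_{k-1},F_k,\dots,F_l]$ because $\vth$ fixes $\gS(\g)^\g$ setwise and acts on it through the already-adjusted generating set. Each $F_k^{(r)}$ is a polynomial in the generators whose monomials all have total degree $\le \deg F_k$; since the degrees $d_1\le\cdots\le d_l$ are increasing, the only monomials of degree $\le d_k$ involving $F_k$ linearly and nothing of higher degree are of the shape (constant)$\cdot F_k$ plus lower-degree-in-higher-index terms, so exactly one eigencomponent, say $F_k^{(r_k)}$, has a nonzero $F_k$-coefficient, and in it that coefficient is a nonzero scalar; moreover $r_k$ is forced to equal the $\vth$-weight of $F_k^\bullet$. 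Set $H_k := F_k^{(r_k)}$. Then $H_k$ is a $\vth$-eigenvector, $\{H_1,\dots,H_k,F_{k+1},\dots,F_l\}$ still freely generates $\gS(\g)^\g$ (we performed an invertible triangular change of the $k$-th generator), and $H_k^\bullet = F_k^\bullet$ up to a nonzero scalar because passing to the $r_k$-eigencomponent does not lower the top $\vp$-degree: $F_k^\bullet$ is already homogeneous of $\vth$-weight $r_k$, hence survives into $F_k^{(r_k)}$.

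The step I expect to be the main obstacle is the bookkeeping in the inductive replacement — namely, checking carefully that the eigencomponent $F_k^{(r_k)}$, expressed as a polynomial in the current generating set, has \emph{exactly one} monomial that is linear in $F_k$ and nothing of strictly larger degree, so that the change of generators is genuinely triangular and invertible, and that this selected eigencomponent retains the correct leading bi-homogeneous term $F_k^\bullet$. This is where one must use that the $H_j$'s for $j<k$ have the \emph{same} leading terms as the $F_j$'s (so the $\vp$-grading is not disturbed) and that the degrees are non-decreasing. Once that triangularity is established, the rest — g.g.s.\ via the degree-sum criterion of Theorem~\ref{thm:kot14}(ii), algebraic independence via leading terms, and ``still generates'' via invertibility of the change — is routine. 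For general (outer) $\vth$ one should also remark that, as in Theorem~\ref{thm:kot14}, the statement $\sum_j\deg_\vp F_j\ge D_\vth$ and its characterization of equality do not require $\vth$ inner, so nothing in the scheme is special to the inner case beyond the simplification already exploited in Theorem~\ref{thm:main3-2}.
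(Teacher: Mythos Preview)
Your overall strategy matches the paper's: replace one generator at a time by a $\vth$-eigencomponent and track $\vp$-degrees using the criterion of Theorem~\ref{thm:kot14}. You are also right that the eigencomponent of weight $r_k\equiv d_k^\bullet \pmod m$ is the correct one, since $F_k^{(r_k)}=\sum_{i\equiv r_k}F_{k,i}$ has top bi-homogeneous part exactly $F_k^\bullet$. So the algebraic-independence half of your induction (via leading terms) is fine.

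The gap is in the ``still freely generates'' step. Your triangularity claim---that exactly one eigencomponent $F_k^{(r)}$ has nonzero $F_k$-coefficient when expressed in the current generating set---is not justified and is in fact false when several generators share the degree $d_k$. For instance, if $d_{k+1}=d_k$, then each $F_k^{(r)}$ decomposes as $\alpha_r F_k+\beta_r F_{k+1}+P_r(H_1,\dots,H_{k-1})$, and nothing prevents several $\alpha_r$ from being nonzero. So the substitution is not triangular in your sense, and you have not shown that the chosen $F_k^{(r_k)}$ can replace $F_k$ as a generator.

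The paper closes this gap differently: it works in the cotangent space $\boldsymbol{\gA}=\gA_+/\gA_+^2$, where the images of all the $H_i^{[k]}$ together with $\bar H_j$ ($j\ne i$) span $\boldsymbol{\gA}$, so \emph{some} $H_i^{[k]}$ completes the others to a basis (hence to a minimal generating set). Then Theorem~\ref{thm:kot14}{\sf (i)}---the lower bound $\sum_j\deg_\vp\!H_j\ge D_\vth$ for any generating system---rules out every choice with $\deg_\vp H_i^{[k]}<d_i^\bullet$, leaving precisely the one with $k+d_i^\bullet\equiv 0\pmod m$. Your route can also be repaired without $\gA_+/\gA_+^2$: once you know the new set $H_1,\dots,H_k,F_{k+1},\dots,F_l$ is algebraically independent with the same degrees $d_1,\dots,d_l$ as a free generating set of the graded polynomial ring $\gS(\g)^\g$, a Hilbert-series comparison forces it to generate. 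Either way, the ``triangularity'' argument as written does not suffice.
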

\begin{proof}
Let $H_1,\dots,H_l$ be a {\sf g.g.s.}, hence $\sum_{j=1}^l \deg_\vp H_j=D_\vartheta$ in view 
of Theorem~\ref{thm:kot14}.

Let $\gA_+$ be the ideal in $\gS(\g)^\g$ generated by all homogeneous invariants of positive degree.
Then $\boldsymbol{\gA}:=\gA_+/\gA_+^2$ is a finite-dimensional $\mK$-vector space. If $H\in \gA_+$,
then $\bar H:=H+\gA_+^2\in \boldsymbol{\gA}$.
As is well-known, $F_1,\dots,F_m$ is a generating system for $\gS(\g)^\g$ if and only if
the $\mK$-linear span of $\bar F_1,\dots, \bar F_m$ is the whole of $\boldsymbol{\gA}$.
In our situation, $\dim_\mK \boldsymbol{\gA}=l$ and $\boldsymbol{\gA}=\langle
\bar H_1,\dots, \bar H_l\rangle_{\mK}$.

If $H_i$ is not a $\vartheta$-eigenvector, i.e., $\vartheta(H_i)\not\in\mK H_i$, then we consider the 
(non-minimal) generating set
\[
\{   H_1,\dots,H_{i-1},  H_{i+1},\dots, H_l \} \ \bigcup \ \left\{ H_i^{[k]}=\sum_{j=0}^{m-1} \zeta^{jk}\vartheta^j(H_i) \mid 0\le k<m \right\}
\]
for  $\gS(\g)^\g$ that includes $l+m-1$ polynomials. Since
$\bar H_1,\dots,\bar H_{i-1}, \bar H_{i+1},\dots, \bar H_l$ are linearly independent in $\boldsymbol{\gA}$,
one can pick up a suitable $H_i^{[k]}$ such that
$\bar H_1,\dots,\bar H_{i-1}, H_i^{[k]}, \bar H_{i+1},\dots, \bar H_l$ is again a minimal generating set.
Let us demonstrate that there is actually a unique suitable choice of $H_i^{[k]}$, and this yields a {\sf g.g.s.} as well. 
Recall that $d_j^\bullet=\deg_{\vp}\! H_j^\bullet=\deg_{\vp}\! H_j$.

Clearly $\deg_\vp  H_i^{[k]} \le d_i^\bullet$ for each $k$. Since $\sum_{j=0}^{m-1}\zeta^{j k'}=0$ if 
$1\le k'\le m-1$, we see that $\deg_\vp  H_i^{[k]} = d_i^\bullet$ if $(k+d_i^\bullet)\in m\Z$ and 
$\deg_\vp  H_i^{[k]} < d_i^\bullet$ for all other $k$. If $\deg_\vp  H_i^{[k]} < d_i^\bullet$, then the sum of 
$\vp$-degrees for $H_1,\dots,H_{i-1},H_i^{[k]}, H_{i+1},\dots, H_l$ is less than $D_\vartheta$. By 
Theorem~\ref{thm:kot14}, this means that such choice of $H_i^{[k]}$ in place of $H_i$ provides a set of 
algebraically dependent polynomials. Thus, the only right choice is to take $k$ such that 
$k+d_i^\bullet \equiv 0 \pmod m$, when Theorem~\ref{thm:kot14} also guarantee us that we obtain a 
{\sf g.g.s.}

The procedure reduces the number of generators that are not $\vartheta$-eigenvectors, and we 
eventually obtain a {\sf g.g.s.} that consists of $\vartheta$-eigenvectors.
\end{proof}

Let  $\{H_1,\dots,H_l\}\subset\gS(\g)^{\g}$  be a generating set consisting of $\vartheta$-eigenvectors.
Then  $\vartheta(H_i)=\zeta^{r_i} H_i$ with $0\le r_i<m$. The integers $\{r_i\}$ depend only on the
connected component of $\Aut(\g)$ that contains $\vth$, and if $a$ is the 
order of $\vth$ in ${\Aut}(\g)/\mathsf{Int}(\g)$, then $\zeta^{ar_i}=1$. Therefore, if $\g$ is simple, then 
$\zeta^{r_i} = \pm 1$ for all types but $\GR{D}{4}$.

\begin{lm}    \label{lm:outer-inv}
For any  $\vartheta\in {\sf Aut}(\g)$ of order $m$, we have

{\sf (1)} \ $\vartheta(H_j)=H_j$ if and only if $d^\bullet_j\in m\Z$;

{\sf (2)} \ $\sum_{j=1}^l r_j = \frac{1}{2}m(\rk\g-\rk\g_0)$;

{\sf (3)} \ $\rk\g_0=\# \{j \mid \vartheta(H_j)=H_j\}$.
\end{lm}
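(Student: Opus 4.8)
The plan is to dispose of assertion~(1) at once, obtain~(3) from a Hilbert--series comparison, and then deduce~(2) from~(1) and~(3). For~(1): as an algebra automorphism of $\gS(\g)$ the map $\vartheta$ equals $\vp_\zeta$, because both scale $\g_j$ by $\zeta^{j}$. Hence $\vartheta(H_{j,i})=\vp_\zeta(H_{j,i})=\zeta^{i}H_{j,i}$, while $\vartheta(H_j)=\zeta^{r_j}H_j=\sum_i\zeta^{r_j}H_{j,i}$; comparing the two expressions, a component $H_{j,i}$ can be nonzero only if $i\equiv r_j\pmod m$, and in particular $d_j^\bullet\equiv r_j\pmod m$. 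Since $0\le r_j<m$, this already gives $\vartheta(H_j)=H_j\Leftrightarrow r_j=0\Leftrightarrow m\mid d_j^\bullet$.

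For~(3): put $P(q)=\sum_{d\ge 0}\tr\!\bigl(\vartheta\mid\gS^d(\g)^\g\bigr)q^d$; since $\gS(\g)^\g=\bbk[H_1,\dots,H_l]$ with $\vartheta(H_j)=\zeta^{r_j}H_j$, one has $P(q)=\prod_{j=1}^l(1-\zeta^{r_j}q^{d_j})^{-1}$, so the order of the pole of $P(q)$ at $q=1$ is exactly $\#\{j\mid r_j=0\}=\#\{j\mid\vartheta(H_j)=H_j\}$. To evaluate that order I would use Chevalley restriction. Fix a $\vartheta$-stable Borel pair $(\be,\te)$ of $\g$, which exists because $\vartheta$ has finite order and therefore fixes a point of the flag variety of $G$; for such a choice it is standard that $\te^\vartheta$ is a Cartan subalgebra of $\g_0=\g^\vartheta$, so $\dim\te^\vartheta=\rk\g_0$. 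Then $\vartheta$ normalises the Weyl group $W=W(\g,\te)$, the restriction map $\gS(\g)^\g\to\gS(\te)^W$ is a $\vartheta$-equivariant isomorphism of graded algebras, and the Chevalley factorisation $\gS(\te)\cong\gS(\te)^W\otimes_\bbk\cH$, with $\cH$ the coinvariant algebra, is $\vartheta$-equivariant as well. Comparing $\vartheta$-twisted Hilbert series yields
\[
   \frac{1}{\det_\te(1-q\vartheta)}=P(q)\cdot P_\cH(q),\qquad P_\cH(q):=\sum_{d\ge 0}\tr\!\bigl(\vartheta\mid\cH_d\bigr)q^d,
\]
with $P_\cH$ a polynomial. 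The left-hand side has a pole of order $\dim\te^\vartheta=\rk\g_0$ at $q=1$, so the pole order of $P(q)$ is $\rk\g_0+\mathrm{ord}_{q=1}P_\cH(q)$, and it remains to check that $P_\cH(1)=\tr(\vartheta\mid\cH)\neq0$. This follows from Borel's isomorphism $\cH\cong H^{*}(G/B;\bbk)$, which is $\vartheta$-equivariant once $\vartheta$ preserves $B$: such a $\vartheta$ permutes the Schubert cells $BwB/B$, $w\in W$, by an automorphism $w\mapsto\theta(w)$ of $W$, hence acts on the corresponding Schubert basis of $\cH$ by that permutation, whence $\tr(\vartheta\mid\cH)=\#\{w\in W\mid\theta(w)=w\}\ge1$ (equivalently $\tr(\vartheta\mid\cH)=\chi\!\bigl((G/B)^\vartheta\bigr)>0$ by the Lefschetz fixed point formula, since $(G/B)^\vartheta$ is a nonempty smooth projective $G_0$-variety). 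Therefore $\mathrm{ord}_{q=1}P_\cH=0$ and $\#\{j\mid\vartheta(H_j)=H_j\}=\rk\g_0$.

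For~(2): the eigenvalues $\zeta^{r_j}$ depend only on the connected component of $\vartheta$ in $\Aut(\g)$, and $\vartheta$ acts on $\gS(\g)^\g$ just as the pinning-preserving representative of that component does; the latter is defined over $\mathbb{Q}$ and so preserves the canonical $\mathbb{Q}$-structure of $\gS(\g)^\g$. Hence the characteristic polynomial of $\vartheta$ on the space of generators of $\gS(\g)^\g$ lies in $\mathbb{Q}[x]$, so the multiset $\{\zeta^{r_j}\}$ is stable under complex conjugation, i.e.\ $\{r_j\bmod m\}=\{-r_j\bmod m\}$. Under $r\mapsto(-r\bmod m)$ the value $0$ is fixed, and the nonzero values split into pairs $\{r,m-r\}$ each summing to $m$ (a value $m/2$, if present, contributing $m/2$ to the sum); in all cases $\sum_j r_j=\sum_{r_j\neq0}r_j=\tfrac m2\,\#\{j\mid r_j\neq0\}$. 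By~(1) and~(3), $\#\{j\mid r_j\neq0\}=l-\rk\g_0=\rk\g-\rk\g_0$, and~(2) follows.

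The step I expect to be the main obstacle is the non-vanishing $\tr(\vartheta\mid\cH)\neq0$ in~(3): it is exactly what forbids the pole order of $P(q)$ from exceeding $\rk\g_0$, and, unlike the rest of the argument, it really uses the position of $\vartheta$ relative to $W$ (through the coinvariant algebra, or equivalently through the flag variety). Everything else is bookkeeping with Chevalley restriction and with the Galois symmetry of the eigenvalues.
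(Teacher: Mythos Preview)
Your proof is correct, but the route is considerably heavier than the paper's. For (1) you argue exactly as the paper does. For (2) and (3), however, the paper uses a single elementary observation in place of your Hilbert-series and Galois machinery: since $\g_0$ contains regular semisimple elements of $\g$ (Kac), a Cartan subalgebra $\te_0\subset\g_0$ satisfies $\te:=\z_\g(\te_0)$ is a $\vartheta$-stable Cartan of $\g$; by Kostant's regularity criterion, for any $x\in\te_0\cap\g_{\sf reg}$ the differentials $\textsl{d}_x H_1,\dots,\textsl{d}_x H_l$ form a basis of $\te$, and since $x\in\g_0$ one has $\textsl{d}_x H_j\in\te\cap\g_{r_j}$. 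Then (3) is immediate (the $\textsl{d}_x H_j$ lying in $\g_0$ are exactly those with $r_j=0$, and they span $\te_0$), and (2) is just $\sum_j r_j=\sum_{i=1}^{m-1} i\dim(\te\cap\g_i)=\tfrac{m}{2}(\rk\g-\rk\g_0)$, the last equality coming from the Killing-form duality $\dim(\te\cap\g_i)=\dim(\te\cap\g_{m-i})$.

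What the paper's approach buys is a three-line argument that stays entirely inside elementary invariant theory: no Steinberg theorem on $\vartheta$-stable Borel pairs, no Chevalley factorisation with coinvariants, no Borel isomorphism $\cH\cong H^*(G/B)$, and no Lefschetz-type positivity $\chi((G/B)^\vartheta)>0$. Your proof of (3) is valid but imports all of these; your proof of (2) via the $\mathbb{Q}$-rationality of the diagram automorphism is a nice structural observation (the multiset $\{r_j\bmod m\}$ is closed under $r\mapsto -r$), yet that same symmetry is an immediate consequence of the paper's identification of the $r_j$ with the $\vartheta$-weights on $\te$, together with the non-degeneracy of $\varkappa|_\te$. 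In short: nothing is wrong, but you have replaced a local computation at one regular point of $\g_0$ by global statements about $G/B$ and Galois descent.
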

\begin{proof}
{\sf (1)} \ The proof is similar to that of Theorem~\ref{thm:main3-2}(i).
\\
{\sf (2)} \ Recall that $\g_0$ always contains regular semisimple elements of $\g$~\cite[\S 8.8]{kac}.
Therefore, if $\te_0$ is a Cartan subalgebra of $\g_0$, then $\te=\z_\g(\te_0)$ is a $\vartheta$-stable
Cartan subalgebra of $\g$. Since $\varkappa\vert_\te$ is non-degenerate,  
$\dim(\te\cap\g_j)=\dim(\te\cap\g_{m-j})$ for all $j<m$. 

Let us apply Kostant's regularity criterion (cf.~\eqref{eq:ko-re-cr})
to some $x\in\te_0\cap \g_{\sf reg}$. According to this criterion,
$\lg \textsl{d}_x H_j\mid 1\le j\le l\rg_{\bbk}=\te$. Here $\textsl{d}_x H_j\in \te\cap\g_i$ if and only if $r_j=i$.
Therefore
\[
     \sum_{j=1}^l r_j = \sum_{i=1}^{m-1} i \dim(\te\cap\g_i)=\frac{1}{2}m(\dim\te-\dim\te_0).
\]
%\noindent
{\sf (3)} To prove this, it suffices to notice that
$\textsl{d}_x H_j\in \g_0$ if and only if $\vartheta(H_j)=H_j$.
\end{proof}

Now, we can prove our main result on $\gZ_\times$.
\begin{thm}                          \label{free-main}
Suppose that $\vth\in\Aut(\g)$ admits a {\sf g.g.s.} and\/ $\ind\g_{(0)}=\rk\g$. Then 
\\ \indent
{\sf (i)} $\gZ_{\times}$ is a polynomial Poisson-commutative subalgebra of\/ $\gS(\g)^{\g_0}$ having 
the maximal transcendence degree. 
\\ \indent
{\sf (ii)} More precisely,
if $H_1,\dots,H_l$ is a {\sf g.g.s.} that consists of $\vth$-eigenvectors, then
$\gZ_{\times}$ is freely generated by the nonzero bi-homogeneous components of all $\{H_j\}$.
\end{thm}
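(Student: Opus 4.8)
The statement is a synthesis of Theorem~\ref{thm-trdeg} with the counting device used in the proof of Theorem~\ref{thm:main3-2}(ii), now carried out for an arbitrary automorphism. Poisson-commutativity of $\gZ_{\times}$ and the inclusion $\gZ_{\times}\subset\gS(\g)^{\g_0}$ are already in hand ($\gZ_{\times}\subset\gZ$ is Poisson-commutative by the general property of the pencil, and \eqref{incl} gives the invariance), so the real content of (i) is that $\gZ_{\times}$ is a \emph{polynomial} ring of the maximal transcendence degree $\bb(\g,\vth)$. First I would record that $\trdeg\gZ_{\times}=\bb(\g,\vth)$: since $\g$ is semisimple, $\ind\g=\rk\g$, so the hypothesis $\ind\g_{(0)}=\rk\g$ is precisely $\ind\g_{(0)}=\ind\g$, whence Theorem~\ref{thm-trdeg} gives $\trdeg\gZ=\bb(\g,\vth)$ and Corollary~\ref{cl-4} gives $\trdeg\gZ_{\times}=\trdeg\gZ$. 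Consequently both (i) and (ii) follow at once if I exhibit a generating set of $\gZ_{\times}$ of cardinality at most $\bb(\g,\vth)$: a generating set of size not exceeding the transcendence degree must be algebraically independent, and then $\gZ_{\times}$ is the polynomial ring on it.

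By Lemma~\ref{kosiks-lemma} I may choose the {\sf g.g.s.} $H_1,\dots,H_l$ to consist of $\vth$-eigenvectors, $\vth(H_j)=\zeta^{r_j}H_j$ with $0\le r_j<m$, and by \eqref{eq:alg-kr} the nonzero bi-homogeneous components $H_{j,i}$ ($1\le j\le l$, $0\le i\le d_j^\bullet$) generate $\gZ_{\times}$. The key observation is that, on $\g$ and hence on $\gS(\g)$, the automorphism $\vth$ coincides with $\vp_\zeta$, so $\vth(H_{j,i})=\zeta^{i}H_{j,i}$; comparing this with $\vth(H_j)=\zeta^{r_j}H_j$ forces $H_{j,i}=0$ unless $i\equiv r_j\pmod m$, and the same argument applied to $H_j^\bullet$ gives $d_j^\bullet\equiv r_j\pmod m$ (this is the reasoning of Lemma~\ref{lm:outer-inv}(1)). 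Hence for each $j$ at most $(d_j^\bullet-r_j)/m+1$ of the components $H_{j,i}$ are nonzero.

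Summing these bounds over $j$ and using $\sum_{j=1}^l d_j^\bullet=D_\vartheta$ (Theorem~\ref{thm:kot14}(ii), valid since $H_1,\dots,H_l$ is a {\sf g.g.s.}), the total number of nonzero bi-homogeneous components is at most $\tfrac1m\bigl(D_\vartheta-\sum_{j=1}^l r_j\bigr)+l$. Substituting $D_\vartheta=\tfrac m2(\dim\g-\dim\g_0)$, $\sum_{j=1}^l r_j=\tfrac m2(\rk\g-\rk\g_0)$ (Lemma~\ref{lm:outer-inv}(2)) and $l=\rk\g$, this equals $\tfrac12(\dim\g-\dim\g_0)-\tfrac12(\rk\g-\rk\g_0)+\rk\g=\bb(\g)-\bb(\g_0)+\rk\g_0=\bb(\g,\vartheta)$. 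Since these components generate $\gZ_{\times}$ and $\trdeg\gZ_{\times}=\bb(\g,\vartheta)$, they must all be nonzero and algebraically independent; therefore $\gZ_{\times}$ is a polynomial algebra freely generated by the nonzero bi-homogeneous components of $H_1,\dots,H_l$. This is exactly (ii), and (i) is its immediate consequence.

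The only delicate point is the bookkeeping in the last two paragraphs: pinning down precisely which $H_{j,i}$ can survive (the congruence conditions modulo $m$) and verifying that the resulting count is \emph{exactly} $\bb(\g,\vartheta)$ rather than merely $\le\bb(\g,\vartheta)$. Everything else is a direct appeal to Theorems~\ref{thm-trdeg}, \ref{thm:kot14} and Lemmas~\ref{kosiks-lemma}, \ref{lm:outer-inv} together with the elementary fact that generators cannot outnumber the transcendence degree without being algebraically independent.
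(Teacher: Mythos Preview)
Your argument is correct and follows essentially the same route as the paper's own proof: invoke Theorem~\ref{thm-trdeg} and Corollary~\ref{cl-4} for $\trdeg\gZ_{\times}=\bb(\g,\vth)$, use the congruence $i\equiv r_j\pmod m$ to bound the number of nonzero bi-homogeneous components of each $H_j$ by $1+(d_j^\bullet-r_j)/m$, and sum via Theorem~\ref{thm:kot14}(ii) and Lemma~\ref{lm:outer-inv}(2) to get exactly $\bb(\g,\vth)$, forcing the generators to be algebraically independent. The only (harmless) slip is the phrasing in your final paragraph, ``generators cannot outnumber the transcendence degree without being algebraically independent,'' which is backwards; the correct (and earlier correctly used) fact is that the transcendence degree never exceeds the number of generators, so a generating set whose size is at most the transcendence degree must be algebraically independent.
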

\begin{proof}
We already know that $\gZ_{\times}$ is Poisson-commutative and 
$\trdeg\gZ_\times=\bb(\g,\vartheta)$ is maximal possible (Theorem~\ref{thm-trdeg}). Hence we have to 
prove the polynomiality and (ii).

Recall that $\{H_j\}$ is a {\sf g.g.s.} if and only if 
$\sum_{j=1}^l d_j^\bullet=D_\vth=m(\dim\g-\dim\g_0)/2$.
If $\vth(H_j)=H_j$, i.e., $d_j^\bullet\in m\Z$, then $H_j$ has at most $(d_j^\bullet/m)+1$ nonzero
bi-homogeneous components, as in the proof of Theorem~\ref{thm:main3-2}. 
In general,  if $\vth(H_j)=\zeta^{r_j} H_j$, then $H_{j,i}$ can be nonzero only if 
$i \equiv r_j \pmod m$. Therefore, $d_j^\bullet \equiv r_j \pmod m$ and
$H_j$ has at most $1+\frac{d_j^\bullet-r_j}{m}$ nonzero bi-homogeneous components. Using  Lemma~\ref{lm:outer-inv}, we see that
the total number of all nonzero bi-homogeneous components is at most
\begin{multline*}
  \sum_{j=l}^l \left(\frac{d_j^\bullet-r_j}{m}+1\right)  =
\left( \sum_{j=1}^l \frac{d_j^\bullet}{m}\right) - \left(\sum_{j=1}^l \frac{r_j}{m} \right) + l \\
=\frac{D_\vartheta}{m}-\frac{l-\rk\g_0}{2}+l=\bb(\g)-\bb(\g_0)+\rk\g_0=\bb(\g,\vth).
\end{multline*}
On the other hand, it follows from Eq.~\eqref{eq:alg-kr} that the total number of bi-homogeneous components of $H_1,\dots,H_l$ is at least $\bb(\g,\vartheta)$.
Therefore, all admissible bi-homogeneous components must be nonzero and algebraically independent.
\end{proof}

A precise relationship between $\gZ$ and $\gZ_\times$ depends on further properties of $\vth$. Two
complementary assertion are given below.

\begin{cl}     \label{cor1:main4}
In addition to the hypotheses of \emph{Theorem~\ref{free-main}},
suppose that  $\g_{(0)}$ has the {\sl codim}--$2$ property and $\g_0=\g^\vth$ is \emph{not} abelian.
Then $\gZ=\gZ_\times$ is the polynomial algebra freely generated by all nonzero bi-homogeneous components $H_{j,i}$.
\end{cl}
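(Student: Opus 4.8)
The plan is to reduce $\gZ$ to $\gZ_\times$ by showing that the only potentially new generators, namely those coming from $\cz_0$, already lie in $\gZ_\times$.

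First I would pin down $\BP_{\sf reg}$. By hypothesis $\ind\g_{(0)}=\rk\g=\ind\g$, so $0\in\BP_{\sf reg}$. Since $\g_0=\g^\vth$ is not abelian, Corollary~\ref{cor:infty=reg} gives $\infty\notin\BP_{\sf reg}$. As $\BP_{\sf sing}\subseteq\{0,\infty\}$, we conclude $\BP_{\sf reg}=\bbk$, whence
\[
   \gZ=\mathsf{alg}\langle\cz_t\mid t\in\bbk\rangle=\mathsf{alg}\langle\gZ_\times,\cz_0\rangle .
\]
So it suffices to prove $\cz_0\subseteq\gZ_\times$.

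Next I would invoke Theorem~\ref{thm:kot14}{\sf (iii)}. Fix a {\sf g.g.s.} $H_1,\dots,H_l$ of $\gS(\g)^\g$ consisting of $\vth$-eigenvectors, which exists by Lemma~\ref{kosiks-lemma}. Since $\g_{(0)}$ has the {\sl codim}--$2$ property, $\ind\g_{(0)}=l$, and $\{H_j\}$ is a {\sf g.g.s.}, Theorem~\ref{thm:kot14}{\sf (iii)} says that $\cz_0=\gS(\g_{(0)})^{\g_{(0)}}$ is the polynomial algebra freely generated by $H_1^\bullet,\dots,H_l^\bullet$. But by definition $H_j^\bullet=H_{j,d_j^\bullet}$ is the top bi-homogeneous component of $H_j$, hence $H_j^\bullet\in\gZ_\times$ by Eq.~\eqref{eq:alg-kr}. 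Therefore $\cz_0\subseteq\gZ_\times$, and combining with the previous paragraph, $\gZ=\gZ_\times$.

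Finally, Theorem~\ref{free-main} already asserts that, under the present hypotheses (a {\sf g.g.s.} exists and $\ind\g_{(0)}=\rk\g$), the algebra $\gZ_\times$ is the polynomial Poisson-commutative subalgebra of $\gS(\g)^{\g_0}$ freely generated by the nonzero bi-homogeneous components $H_{j,i}$ of such a {\sf g.g.s.}; via $\gZ=\gZ_\times$ this transfers verbatim to $\gZ$, completing the proof. I do not expect a genuine obstacle here: the only point requiring care is that the generating system used to describe $\cz_0$ in Theorem~\ref{thm:kot14}{\sf (iii)} may be taken to be the very same {\sf g.g.s.} of $\vth$-eigenvectors used in Theorem~\ref{free-main}, and this is exactly what the ``any {\sf g.g.s.}'' clause of Theorem~\ref{thm:kot14}{\sf (iii)} (together with Lemma~\ref{kosiks-lemma}) provides.
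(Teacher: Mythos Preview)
Your proposal is correct and follows essentially the same route as the paper: both arguments identify $\BP_{\sf reg}=\bbk$ via Corollary~\ref{cor:infty=reg}, invoke Theorem~\ref{thm:kot14}{\sf (iii)} to get $\cz_0=\bbk[H_1^\bullet,\dots,H_l^\bullet]\subset\gZ_\times$, and conclude $\gZ=\gZ_\times$. Your version is slightly more explicit in citing Lemma~\ref{kosiks-lemma} and Eq.~\eqref{eq:alg-kr}, but the logic is identical.
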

\begin{proof}
In this case, $0\in\BP_{\sf reg}$ and it follows from Theorem~\ref{thm:kot14}{\sf (iii)} that 
$\cz_0=\bbk[H_1^\bullet,\dots,H_l^\bullet]$, which is contained in $\gZ_\times$. On the other hand,
$\infty\in\BP_{\sf sing}$ (Corollary~\ref{cor:infty=reg}), hence $\cz_\infty$ is not required for $\gZ$.
Thus, $\gZ=\gZ_\times$.
\end{proof}

\begin{cl}      \label{cor2:main4}
In addition to the hypotheses of \emph{Theorem~\ref{free-main}},
suppose that $\g_{(0)}$ has the {\sl codim}--$2$ property, $\vth$ is inner, and $\g_0=\g^\vth$ \emph{is} abelian. Then $\cz_\infty=\gS(\g_0)$ and $\gZ=\mathsf{alg}\lg \gZ_{\times},\g_0\rg$ is a polynomial algebra.
 \end{cl}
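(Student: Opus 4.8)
The plan is to prove the two assertions in turn, with all the hypotheses of Theorem~\ref{free-main} in force together with the extra assumptions that $\g_{(0)}$ has the {\sl codim}--$2$ property, $\vth$ is inner, and $\g_0=\g^\vth$ is abelian. Note first that $\vth$ inner forces $\rk\g_0=\rk\g=:l$, and $\g_0$ abelian then makes $\g_0$ a Cartan subalgebra, so $\dim\g_0=l$; in particular $\bb(\g_0)=l$ and $\bb(\g,\vth)=\bb(\g)$ (cf. the remark after~\eqref{eq:MY19}). Also $\infty\in\BP_{\sf reg}$ by Corollary~\ref{cor:infty=reg}, and $0\in\BP_{\sf reg}$ because $\ind\g_{(0)}=l=\ind\g$; hence $\BP_{\sf reg}=\BP$ and $\gZ=\mathsf{alg}\langle\gZ_{\times},\cz_0,\cz_\infty\rangle$.

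\emph{Step 1: $\cz_\infty=\gS(\g_0)$.} Since $\g_0$ lies in the centre of $\g_{(\infty)}$ (Proposition~\ref{prop:graded-limits}), we have $\gS(\g_0)\subseteq\cz_\infty$. On the other hand, Theorem~\ref{thm:ind-inf} gives $\ind\g_{(\infty)}=\dim\g_0+\rk\g-\rk\g_0=\dim\g_0$, so by~\eqref{eq:neravenstvo-ind} applied to $\g_{(\infty)}$ we get $\trdeg\cz_\infty\le\ind\g_{(\infty)}=\dim\g_0=\trdeg\gS(\g_0)$. Thus $\cz_\infty$ is algebraic over $\gS(\g_0)$. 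Since $\gS(\g_0)$ is a polynomial subalgebra of $\gS(\g)$ generated by a linear subspace of $\g$, it is algebraically closed in $\gS(\g)$ (writing $\gS(\g)=\gS(\g_0)[y_1,\dots,y_b]$ for $y_i$ a basis of a complement, an element of $\gS(\g)$ algebraic over the coefficient domain $\gS(\g_0)$ must have $y$-degree $0$). Therefore $\cz_\infty=\gS(\g_0)$.

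\emph{Step 2: $\gZ=\mathsf{alg}\langle\gZ_{\times},\g_0\rangle$.} Because $\g_{(0)}$ has the {\sl codim}--$2$ property, $\ind\g_{(0)}=l$, and $H_1,\dots,H_l$ is a {\sf g.g.s.}, Theorem~\ref{thm:kot14}{\sf(iii)} applies and $\cz_0=\bbk[H_1^\bullet,\dots,H_l^\bullet]$; each $H_j^\bullet$ is a bi-homogeneous component of $H_j$ and hence lies in $\gZ_{\times}$ by~\eqref{eq:alg-kr}. So $\cz_0\subseteq\gZ_{\times}$, and combining with Step~1, $\gZ=\mathsf{alg}\langle\gZ_{\times},\cz_\infty\rangle=\mathsf{alg}\langle\gZ_{\times},\gS(\g_0)\rangle=\mathsf{alg}\langle\gZ_{\times},\g_0\rangle$, which is the first asserted equality.

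\emph{Step 3: polynomiality.} Since $\vth$ is inner and $\ind\g_{(0)}=l$, Theorem~\ref{thm:main3-2} shows that $\gZ_{\times}$ is freely generated by the components $H_{j,i}$ with $1\le j\le l$ and $i\in m\BZ$, $0\le i\le d_j^\bullet$; there are $\sum_{j}(d_j^\bullet/m+1)=D_\vth/m+l=\tfrac12(\dim\g-\dim\g_0)+l=\bb(\g)$ of them. Exactly $l$ of these — the $H_{j,0}$, $j=1,\dots,l$ — are of $\vp$-degree $0$ and hence lie in $\gS(\g_0)$. Consequently $\gZ=\mathsf{alg}\langle\gZ_{\times},\g_0\rangle$ is generated by $\g_0$ together with the remaining $\bb(\g)-l$ components $H_{j,i}$ with $i\ge m$, i.e.\ by $\dim\g_0+(\bb(\g)-l)=\bb(\g)$ elements. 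On the other hand $\trdeg\gZ\le\bb(\g,\vth)=\bb(\g)$ by~\eqref{eq:MY19}, while $\trdeg\gZ\ge\trdeg\gZ_{\times}=\bb(\g)$ by Theorem~\ref{free-main}; so $\gZ$ is an integral domain of transcendence degree $\bb(\g)$ generated by $\bb(\g)$ elements, whence those generators are algebraically independent and $\gZ$ is a polynomial algebra.

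The one place that needs genuine care rather than bookkeeping is Step~1 — passing from ``$\cz_\infty$ is a finite extension of $\gS(\g_0)$'' to ``$\cz_\infty$ equals $\gS(\g_0)$''; the transcendence-degree bound alone does not suffice, and the algebraic closedness of the polynomial subalgebra $\gS(\g_0)$ in $\gS(\g)$ is what closes the gap. The rest is assembling earlier results and the identity $D_\vth=\tfrac m2(\dim\g-\dim\g_0)$.
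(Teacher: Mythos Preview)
Your proof is correct and follows essentially the same route as the paper's: establish $\cz_\infty=\gS(\g_0)$ via the index computation and algebraic closedness of $\gS(\g_0)$, absorb $\cz_0$ into $\gZ_\times$ using the {\sl codim}--$2$ property and Theorem~\ref{thm:kot14}{\sf(iii)}, and then observe that adjoining $\g_0$ to $\gZ_\times$ amounts to swapping the $l$ generators $H_{j,0}$ for a basis of $\g_0$. Your Step~3 makes the transcendence-degree count explicit where the paper leaves it implicit, but the argument is the same.
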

\begin{proof}  
Recall that the subspace $\g_0$ lies in the centre of $\g_{(\infty)}$, hence $\gS(\g_0)\subset\cz_\infty$.
If $\vartheta$ is inner, then $\ind\g_{(\infty)}=\dim\g_0=\rk\g$ (Theorem~\ref{thm:ind-inf}). Hence
$\gS(\g_0)\subset \cz_\infty$ is an algebraic extension.
Since $\gS(\g_0)$ is an algebraically closed
subalgebra of $\gS(\g)$, it coincides with  $\cz_\infty$. As in the previous corollary, we have
$\cz_0\subset \gZ_\times$. Therefore, 
\[
  \gZ=\mathsf{alg}\lg \gZ_{\times}, \cz_\infty\rg=\mathsf{alg}\lg \gZ_{\times},\g_0\rg .
\]
Among the algebraically independent generators of $\gZ_\times$, one has $l$ nonzero functions
$H_{j,0}\in \gS(\g_0)$, $j=1,\dots,l$. (Note that $d_j^\bullet\in m\BZ$ for each $j$, since $\vth$ is inner.) Hence the 
passage from $\gZ_\times$ to $\gZ$ merely means that we have to replace $\{H_{j,0}\}$ with a basis
for $\g_0$.
\end{proof}

\begin{rmk}    \label{rem:Z-bad}
If $\vartheta$ does not admit a {\sf g.g.s.}, then $H_1^\bullet,\dots,H_l^\bullet$ are algebraically 
dependent. Since $\{H_i^\bullet\}$ are certain bi-homogeneous components, this means that
the number of all nonzero bi-homogeneous components of $\{H_j\}$ is larger than $\trdeg\gZ$.
Moreover, the case of involutions ($m=2$) shows that then the algebra 
$\cz_0=\gS(\g_{(0)})^{\g_{(0)}}$, which is contained in $\gZ$, is not polynomial, 
see~\cite[Sect.~5]{Y-imrn}. Therefore, it would be unwise to expect really good properties of $\gZ_\times$ 
or $\gZ$ without presence of g.g.s.
\end{rmk}

%%%%%%%%%%  Section %%%%%%%%%
\section{On the maximality problem} 
\label{sect:max}

Since $\gZ\subset \gS(\g)^{\g_0}$ by~\eqref{eq:Z-v-inv-g_0},  the algebra 
$\tilde\gZ:=\mathsf{alg}\langle\gZ,\gS(\g_0)^{\g_0}\rangle$ is still Poisson-commutative. 
It was proved in~\cite{OY} that if $\vth$ is an involution admitting a {\sf g.g.s.}, then
$\tilde\gZ$ is a {\bf maximal} Poisson-commutative subalgebra of $\gS(\g)^{\g_0}$. For $m\ge 3$, 
the same problem becomes more difficult, and we obtain only partial results in this section.
Our line of argument employs properties of graded polynomial algebras. 

Let $F_1,\ldots,F_N\in\mK[\{x_i\}]=\mK[\mathbb A^n]$ be algebraically independent 
homogeneous  polynomials.  Each differential $\textsl{d}F_i$ is a regular $1$-form on $\mathbb A^n$
with polynomial coefficients. Then 
\beq       \label{e-bff}
      \textsl{d}F_1\wedge\ldots \wedge \textsl{d} F_N=\bff R,
\eeq 
where $\bff\in\mK[\mathbb A^n]$ and $R$ is a regular differential $N$-form that is nonzero on a big 
open subset. Note that $\bff$ is defined uniquely up to multiplication with scalars. Let  
${\mathcal F}=\mK[\{F_j\}]$ be the subalgebra of $\mK[\mathbb A^n]$ generated by the polynomials 
$F_j$.

\begin{thm}[{\cite[Theorem~1.1]{ppy}}]       \label{ppy-max}  
If $\bff=1$, then ${\mathcal F}$ is an 
algebraically closed subalgebra of  $\mK[\mathbb A^n]$, i.e., if $H\in \mK[\mathbb A^n]$ is 
algebraic over the field\/ $\mK(F_1,\ldots,F_N)$, then
$H\in \mathcal F$. 
\end{thm}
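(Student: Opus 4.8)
The plan is to reduce to the case of homogeneous $H$, induct on $\deg H$, and use the hypothesis $\bff=1$ to integrate the differential relation that such an $H$ must satisfy. For the reduction, note that $\widetilde{\mathcal F}:=\{G\in\mK[\mathbb A^n]\mid G\ \text{is algebraic over}\ \mK(F_1,\dots,F_N)\}$ is a subalgebra of $\mK[\mathbb A^n]$ which is stable under the $\bbk^\star$-action on $\mK[\mathbb A^n]$ coming from the grading: if $G$ satisfies a polynomial relation with coefficients in ${\mathcal F}=\mK[F_1,\dots,F_N]$, then, since the $F_j$ are homogeneous, applying $s\in\bbk^\star$ produces such a relation for $s{\cdot}G$. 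Hence $\widetilde{\mathcal F}$ is graded, so each homogeneous component of an element of $\widetilde{\mathcal F}$ again lies in $\widetilde{\mathcal F}$; as ${\mathcal F}$ is graded, it suffices to show that a homogeneous $H\in\widetilde{\mathcal F}$ of degree $d$ lies in ${\mathcal F}$. We induct on $d$, the case $d=0$ being clear since $\mK$ is algebraically closed.

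Let $d\ge1$, put $L=\mK(F_1,\dots,F_N)$, and let $p\in{\mathcal F}[y]$ be a minimal polynomial of $H$ over $L$ (with coefficients written as polynomials in the $F_j$). Since $\mathrm{char}\,\mK=0$, $p$ is separable, so $p'(H)\ne0$, and differentiating $p(H)=0$ expresses $\textsl{d}H$ as an $\mK(\mathbb A^n)$-linear combination of $\textsl{d}F_1,\dots,\textsl{d}F_N$; in particular $\textsl{d}H\wedge\textsl{d}F_1\wedge\dots\wedge\textsl{d}F_N=0$ identically. Now comes the key point: with $\textsl{d}F_1\wedge\dots\wedge\textsl{d}F_N=\bff R$ as in the statement, the hypothesis $\bff=1$ forces the common zero locus $S$ of the maximal minors of the Jacobian of $(F_1,\dots,F_N)$ — i.e.\ the vanishing locus of $\textsl{d}F_1\wedge\dots\wedge\textsl{d}F_N$ — to have codimension $\geq 2$ in $\mathbb A^n$, since any irreducible divisor contained in $S$ would divide every minor, hence divide $\bff=1$. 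On $U=\mathbb A^n\setminus S$ the covectors $\textsl{d}_\xi F_j$ are linearly independent, so $\textsl{d}H=\sum_j G_j\textsl{d}F_j$ there with each $G_j$ regular on $U$ by Cramer's rule; and because $\mathbb A^n$ is smooth and $\codim S\ge2$, each $G_j$ extends to an element of $\mK[\mathbb A^n]$, so this relation holds on all of $\mathbb A^n$. Comparing homogeneous degrees, $G_j$ is homogeneous of degree $d-\deg F_j$.

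To close the induction, observe that by uniqueness of the expansion of $\textsl{d}H$ in the free $\mK(\mathbb A^n)$-module with basis $\textsl{d}F_1,\dots,\textsl{d}F_N$, each $G_j$ coincides with the corresponding coefficient $-(\partial p/\partial y_j)(H,\underline F)/p'(H)$ coming from the differentiated minimal polynomial; hence $G_j\in\mK[\mathbb A^n]$ is a rational expression in $H,F_1,\dots,F_N$, so $G_j\in\widetilde{\mathcal F}$. Since $\deg G_j=d-\deg F_j<d$, the induction hypothesis gives $G_j\in{\mathcal F}$. Contracting $\textsl{d}H=\sum_j G_j\textsl{d}F_j$ with the Euler vector field $\mathcal E=\sum_i x_i\,\partial/\partial x_i$ and applying Euler's identity yields $d\cdot H=\sum_j(\deg F_j)\,G_j F_j\in{\mathcal F}$, whence $H\in{\mathcal F}$ and the induction is complete.

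I expect the main obstacle to be the middle step: translating $\bff=1$ into the bound $\codim S\ge2$ — which requires identifying $\bff$ with the greatest common divisor of the maximal Jacobian minors and $S$ with their common zero locus — and then extending the Cramer coefficients $G_j$ regularly across $S$ by the Hartogs principle on the smooth variety $\mathbb A^n$, checking in particular that $G_j$ is well defined independently of the chosen minor. Once these are in place, the reduction to the homogeneous case, the separability argument, and the Euler device that closes the induction are all routine.
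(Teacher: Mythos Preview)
The paper does not give its own proof of this statement; it is quoted verbatim from \cite[Theorem~1.1]{ppy} and used as a black box. So there is nothing in the present paper to compare against.

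Your argument is correct and is essentially the original proof from \cite{ppy}: reduce to homogeneous $H$, differentiate the minimal polynomial to get $\textsl{d}H$ in the span of the $\textsl{d}F_j$, use $\bff=1$ to ensure the locus where the $\textsl{d}F_j$ become dependent has codimension $\ge 2$, extend the Cramer coefficients $G_j$ across this locus by normality of $\mathbb A^n$, observe that the $G_j$ are simultaneously polynomial and algebraic over $L$ of strictly smaller degree, and close the induction via the Euler identity. The only cosmetic point is that the base case $d=0$ does not need $\mK$ to be algebraically closed---a constant lies in $\mathcal F$ tautologically---but this does no harm.
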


Consider the following conditions on the Lie algebra $\g_{(0)}$:   
\begin{itemize}
\item[$(\lozenge_1)$]  \ $\ind\g_{(0)}=\rk\g=l$, i.e., $0\in\BP_{\sf reg}$; 
\item[$(\lozenge_2)$]  \ $\cz_0$ is a polynomial ring generated by $H_i^\bullet$ with $1\le i\le l$; 
\item[$(\lozenge_3)$]  \ $\dim\g_{(0),{\sf sing}}^*\le \dim\g-2$, i.e., $\g_{(0)}$ has the {\sl codim}--$2$
property;
\item[$(\lozenge_4)$] \ either $\g_0$ is non-abelian {\it or\/} $\g_0$ is abelian and $\vth$ is inner.
\end{itemize}
These conditions imply that $\gZ$ is a polynomial algebra, see Corollary~\ref{cor1:main4} and \ref{cor2:main4}. Moreover, the following is true.
\begin{prop}        \label{prop:tilde-Z-free}
If conditions $(\lozenge_1)$--$(\lozenge_4)$ are satisfied, then
$\tilde\gZ$ is a polynomial algebra, too.
\end{prop}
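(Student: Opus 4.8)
Proof proposal.

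The plan is to split along the dichotomy in $(\lozenge_4)$. If $\g_0$ is abelian and $\vth$ is inner, the statement is essentially free: by Corollary~\ref{cor2:main4} we have $\gS(\g_0)=\cz_\infty\subset\gZ$, and since $\g_0$ is abelian, $\gS(\g_0)^{\g_0}=\gS(\g_0)\subset\gZ$, whence $\tilde\gZ=\gZ$, which is already polynomial by that same corollary. So the real work is the case $\g_0$ non-abelian, and I would carry it out as follows. By Corollary~\ref{cor1:main4} we have $\gZ=\gZ_\times$, and by Theorem~\ref{free-main} this is the polynomial ring freely generated by the nonzero bi-homogeneous components $H_{j,i}$ of a {\sf g.g.s.} $H_1,\dots,H_l$ consisting of $\vth$-eigenvectors, the total number of such components being $\bb(\g,\vth)$. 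I would organize these into the components with $i\ge 1$ and the components $H_{j,0}$ of $\vp$-degree $0$.

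Next I would record the two facts about the degree-$0$ components that make everything fit. First, $H_{j,0}$ lies in $\gS(\g_0)$ (it is the $\vp$-degree-$0$ part of $H_j$), and because the projection $\gS(\g)\to\gS(\g_0)$ onto that part is $\g_0$-equivariant while $H_j\in\gS(\g)^{\g_0}$, in fact $H_{j,0}\in\gS(\g_0)^{\g_0}$. Second, a component $H_{j,i}$ can be nonzero only if $i\equiv r_j\pmod m$, where $\vth(H_j)=\zeta^{r_j}H_j$ with $0\le r_j<m$; hence $H_{j,0}\ne 0$ exactly when $\vth(H_j)=H_j$, and by Lemma~\ref{lm:outer-inv}(3) the number of such indices $j$ is precisely $\rk\g_0$. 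Thus there are exactly $\rk\g_0$ nonzero components $H_{j,0}$, they are algebraically independent (being among the free generators of $\gZ$), and so the subalgebra they generate sits inside $\gS(\g_0)^{\g_0}$ with transcendence degree $\rk\g_0$, which is the Krull dimension of the polynomial ring $\gS(\g_0)^{\g_0}$. Consequently $\gS(\g_0)^{\g_0}$ is algebraic over this subalgebra, hence over $\gZ$, and therefore $\tilde\gZ$ is algebraic over $\gZ$; in particular $\trdeg\tilde\gZ=\trdeg\gZ=\bb(\g,\vth)$ (cf.\ Corollary~\ref{cl-4}).

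Finally I would fix free homogeneous generators $P_1,\dots,P_{\rk\g_0}$ of $\gS(\g_0)^{\g_0}$. Since each nonzero $H_{j,0}$ belongs to $\gS(\g_0)^{\g_0}=\mathsf{alg}\langle P_1,\dots,P_{\rk\g_0}\rangle$, the degree-$0$ components may be deleted from the generating set of $\tilde\gZ$, leaving
\[
 \tilde\gZ=\mathsf{alg}\bigl\langle\{H_{j,i}\mid i\ge 1,\ H_{j,i}\ne 0\}\cup\{P_1,\dots,P_{\rk\g_0}\}\bigr\rangle .
\]
The cardinality of this set is $\bigl(\bb(\g,\vth)-\rk\g_0\bigr)+\rk\g_0=\bb(\g,\vth)$, which matches $\trdeg\tilde\gZ$; a generating set of size equal to the transcendence degree is automatically algebraically independent, so $\tilde\gZ$ is polynomial on it. The only genuinely delicate point is the bookkeeping in the middle step — that $\gS(\g_0)^{\g_0}$ contributes exactly $\rk\g_0$ ``new'' generators while being algebraic over the degree-$0$ slice of $\gZ$ — and that rests on the $\vp$-grading together with Lemma~\ref{lm:outer-inv}; everything else is assembling Corollaries~\ref{cor1:main4}, \ref{cor2:main4} and Theorem~\ref{free-main}.
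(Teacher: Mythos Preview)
Your proof is correct and follows essentially the same approach as the paper: identify the $\rk\g_0$ nonzero components $H_{j,0}\in\gS(\g_0)^{\g_0}$ via Lemma~\ref{lm:outer-inv} and replace them by the basic symmetric invariants of $\g_0$ to obtain a generating set of cardinality $\bb(\g,\vth)=\trdeg\tilde\gZ$. The paper compresses this into a single sentence without splitting on $(\lozenge_4)$ (noting that for abelian $\g_0$ the basic invariants are just a basis of $\g_0$, so the replacement is uniform), whereas you make the dichotomy and the counting argument explicit.
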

\begin{proof}
By Lemma~\ref{lm:outer-inv}, we have $H_{j,0}\ne 0$ if and only if $\vth(H_j)=H_j$, and the number 
of such $j$'s equals $\rk\g_0$. Then $H_{j,0}\in \gS(\g_0)^{\g_0}$.
In the passage from $\gZ$ to $\tilde\gZ$, these nonzero generators $H_{j,0}$ are replaced with the
basic symmetric invariants of $\g_0$. 
\end{proof}

\begin{lm}       \label{D}
Assume that $(\lozenge_1)$ and $(\lozenge_3)$ hold. 
Suppose that there is a divisor $D\subset\g^*$ such that $\dim\textsl{d}_\eta \gZ_\times < \bb(\g,\vth)$ for any 
$\eta\in D$. Then $D=Y\times \g_{>0}^*$, where $Y\subset\g_0^*\cap\g^*_{\sf sing}$
is a $G_0$-stable conical divisor in $\g_0^*$. 
\end{lm}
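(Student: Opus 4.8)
The statement to prove concerns the locus $D$ where the differentials of $\gZ_\times$ drop rank. First I would translate the hypothesis into differential-geometric terms: by Corollary~\ref{cl-4} and Theorem~\ref{thm-trdeg}, under $(\lozenge_1)$ we have $\trdeg\gZ_\times=\bb(\g,\vth)$, so a generic $\eta$ satisfies $\dim\textsl{d}_\eta\gZ_\times=\bb(\g,\vth)$, and the set where this fails is closed. The goal is to show that, if this failure locus contains a divisor $D$, then $D$ is forced to have the product shape $Y\times\g_{>0}^*$ with $Y$ a $G_0$-stable conical divisor inside $\g_0^*\cap\g^*_{\sf sing}$.

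**Key steps.** The first step is to exploit the grading symmetry. Recall the torus action $\vp_s\in\GL(\g)$ with $\vp_s(x)=s^jx$ on $\g_j$, and its dual $\vp^*_s$ on $\g^*$. By Proposition~\ref{prop:compat}(i), $\vp_s$ conjugates $\{\,\,,\,\}_{(s)}$ to $\{\,\,,\,\}$, and since $\gZ_\times$ is generated by all bi-homogeneous components of the $H_j$ (Eq.~\eqref{eq:alg-kr}), the algebra $\gZ_\times$ is $\vp_s$-stable: $\vp_s(H_{j,i})=s^i H_{j,i}$. Hence the "bad" set $\{\eta\mid\dim\textsl{d}_\eta\gZ_\times<\bb(\g,\vth)\}$ is stable under the $\bbk^\star$-action via $\vp^*$, and also under the $G_0$-action (as $\gZ_\times\subset\gS(\g)^{\g_0}$, by~\eqref{incl}). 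So $D$ is $G_0\times\bbk^\star$-stable; conicality of $D$ (homogeneity) comes from the usual scaling grading on $\gS(\g)$ together with bi-homogeneity. The second step is to identify where $D$ can sit: take a generic point $\eta=\eta_0+\eta'$ with $\eta_0\in\g_0^*$, $\eta'\in\g_{>0}^*$. Using the $\vp^*_s$-action, $\lim_{s\to\infty}\vp^*_s$ applied appropriately (or $s\to 0$, cf. Lemma~\ref{lm:xi}) degenerates $\eta$ toward $\eta_0$, and since $D$ is closed and $\bbk^\star$-stable, if $\eta\in D$ then its limit $\eta_0$ (or the whole fiber over $\eta_0$) lies in $D$; running the argument the other way, genericity of $\eta_0$ in $\g_0^*$ together with $(\lozenge_1)$--$(\lozenge_3)$ should force: $\eta\in D$ depends only on $\eta_0$. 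That is, $D=Y\times\g_{>0}^*$ for $Y=D\cap\g_0^*$ (intersecting with the slice $\g_{>0}^*=0$, using the identification of underlying vector spaces). The third step is to show $Y\subset\g_0^*\cap\g^*_{\sf sing}$: if $\eta_0\in\g_0^*$ were regular in $\g$, then by Lemma~\ref{lm:xi} we would have $\eta_0+\eta'\in\g^*_{\infty,{\sf reg}}$ for generic $\eta'$, and combined with $(\lozenge_3)$ (so $\eta$ avoids $\g^*_{(0),\sf sing}$ and all $\g^*_{(t),\sf sing}$ on a big set) we would be in the situation of Proposition~\ref{prop:krest}, giving $\dim\textsl{d}_\eta\gZ_\times=\dim L(\eta)=\bb(\g,\vth)$ by the proof of Theorem~\ref{thm-trdeg} — contradicting $\eta\in D$. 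Hence every point of $Y$ must already be singular in $\g$, i.e. $Y\subset\g_0^*\cap\g^*_{\sf sing}$. Finally, $Y$ is a divisor in $\g_0^*$ precisely because $D$ is a divisor in $\g^*$ and $D$ is a cylinder over $Y$ with fibre $\g_{>0}^*$; the $G_0$-stability and conicality of $Y$ are inherited from those of $D$.

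**Main obstacle.** The delicate point is the second step — proving that membership in $D$ genuinely factors through $\eta_0$ alone, i.e. that $D$ is exactly a cylinder $Y\times\g_{>0}^*$ and not something larger that merely contains such a cylinder. One direction (a cylinder over $Y:=D\cap(\g_0^*\times 0)$ is contained in $D$) follows from $\bbk^\star$-stability and closedness by sending $s\to\infty$; the reverse inclusion $D\subseteq Y\times\g_{>0}^*$ requires knowing that for $\eta_0\notin Y$ the full fibre $\eta_0+\g_{>0}^*$ avoids $D$ except possibly in a smaller-dimensional piece — but $D$ being a divisor leaves no room for a smaller-dimensional exceptional piece to matter, so one argues that $D$ is a union of irreducible divisors each of which, by $\vp^*$-stability and the limit argument, must be a cylinder. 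I expect this irreducible-component bookkeeping, together with carefully invoking Proposition~\ref{prop:krest} on the big open set $U_{\sf sr}$ to pin down which $\eta$ can possibly lie in $D$, to be where the real work is; the remaining assertions ($G_0$-stability, conicality, $Y\subset\g^*_{\sf sing}$) are then comparatively routine consequences of Lemma~\ref{lm:xi} and Theorem~\ref{thm-trdeg}.
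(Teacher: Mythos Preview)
Your step~3 contains the essential idea --- the contradiction via Lemma~\ref{lm:xi} and the computation $\dim L(\eta)=\bb(\g,\vth)$ from the proof of Theorem~\ref{thm-trdeg} --- but the ordering of steps~2 and~3 creates a circularity. As written, your step~3 fixes $\eta_0\in Y$ and varies $\eta'$, which presupposes that the whole fibre $\eta_0+\g_{>0}^*$ lies in $D$; that is exactly the cylinder structure from step~2. Conversely, your $\vp^*_s$-stability argument in step~2 gives only the inclusion $D\subset Y\times\g_{>0}^*$ (via the limit $s\to\infty$), and to upgrade this to equality by dimension count you need $\codim_{\g_0^*}Y\ge 1$, which is precisely what step~3 is supposed to provide. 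The ``irreducible-component bookkeeping'' you mention does not by itself break this loop.

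The paper dissolves the difficulty by reversing the order. It takes a \emph{generic point} $\eta\in D$ --- one lying in $U_{\sf sr}\cap\g^*_{(0),\sf reg}$ and with $\eta_0\in(\g_0)^*_{\sf reg}$, which is possible because these conditions fail only on sets of codimension $\ge 2$ while $D$ is a divisor --- and argues directly that $\eta_0\in\g^*_{\sf sing}$. Indeed, if $\eta_0$ were regular in $\g$, then Lemma~\ref{lm:xi} gives $\eta\in\g^*_{\infty,\sf reg}$ (for this specific $\eta$, no genericity of $\eta'$ needed), so $\eta\in U_{\sf srr}$ and the explicit hypotheses in the proof of Lemma~\ref{rank-ker} are met; then $\textsl{d}_\eta\gZ_\times=L(\eta)$ has dimension $\bb(\g,\vth)$, contradicting $\eta\in D$. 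This yields $\pi_0(D)\subset\g_0^*\cap\g^*_{\sf sing}$ with no cylinder assumption. Now $D\subset\overline{\pi_0(D)}\times\g_{>0}^*$ is automatic, both sides have codimension~$1$, and equality $D=Y\times\g_{>0}^*$ follows. The $\vp^*_s$-action is never invoked; $G_0$-stability and conicality are handled at the end just as you indicate.
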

\begin{proof}
By Proposition~\ref{prop:codim-sing},  there is a big open subset $U_{\sf sr}\subset\g^*$ such that 
$U_{\sf sr}\subset \bigcap_{t\ne 0,\infty} \g_{(t),{\sf reg}}^*$.
By $(\lozenge_3)$, $\g_{(0),{\sf reg}}^*$ is also a big open subset of $\g^*$. 
Hence $\tilde U:=D\cap U_{\sf sr} \cap \g_{(0),{\sf reg}}^*$ is open and dense in $D$.
Take any $\eta\in \tilde U$.
If we write $\eta=\eta_0+\eta'$ with $\eta\in\g_0^*$ and $\eta'\in\g_{>0}^*$, 
then we may also assume that 
$\eta_0\in (\g_0)^*_{\sf reg}$. Assume that $\eta_0\in\g^*_{\sf reg}$.
Then $\eta\in\g^*_{\infty,{\sf reg}}$ by Lemma~\ref{lm:xi}.  
Thus, in that case, $\eta\in U_{\sf srr}$. Moreover, such $\eta$ is generic in the sense of Lemma~\ref{rank-ker} and the conclusion of that lemma holds for it.  
Arguing as in the proof of Theorem~\ref{thm-trdeg} and using \eqref{sum-dif},
we obtain that $\dim L(\eta)=\bb(\g,\vth)$, a contradiction! 

Therefore we must have $\eta_0\in \g^*_{\sf sing}$ for a generic $\eta\in D$ and hence for any $\eta\in D$. 
Since $\g_0^*\cap\g^*_{\sf sing}$ is a proper subset of $\g_0^*$, 
the divisor $D$ is indeed of the form $Y\times \g_{>0}^*$, where $Y\subset \g_0^*\cap\g^*_{\sf sing}$. 

The algebra $\gZ_\times$ consists of $G_0$-invariants and the group $G_0$ is connected. 
Thereby each irreducible component of the subset 
$\{\gamma\in\g^* \mid \dim\textsl{d}_\gamma \gZ_\times<\bb(\g,\vth)\}$ is $G_0$-stable. In particular, 
$D$, and hence $Y$ as well, is $G_0$-stable. Since $\gZ_\times$ is a homogeneous subalgebra,
the divisor $Y\subset\g_0^*$ is conical.  
\end{proof}

\begin{thm}            \label{thm:no-D}
Suppose that $(\lozenge_1)$,  $(\lozenge_2)$, and $(\lozenge_3)$ hold. 
If\/ $\g_0\cap\g^*_{\sf sing}$ does not contain divisors, then $\gS(\g_0)^{\g_0}\subset\gZ_\times$ and 
$\gZ_\times=\gZ=\tilde\gZ$ is a maximal Poisson-commutative subalgebra of\/ $\gS(\g)^{\g_0}$.  
\end{thm}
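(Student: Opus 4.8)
The plan is to combine three ingredients: the description of the ``bad locus'' of $\gZ_\times$ from Lemma~\ref{D}, the algebraic-closedness criterion of Theorem~\ref{ppy-max}, and the transcendence-degree bound~\eqref{eq:MY19}. First I would show $\gS(\g_0)^{\g_0}\subset\gZ_\times$. The hypotheses $(\lozenge_1)$ and $(\lozenge_3)$ let us apply Lemma~\ref{D}: the set of $\eta\in\g^*$ where $\dim\textsl{d}_\eta\gZ_\times<\bb(\g,\vartheta)$ contains no divisor, because under the extra assumption that $\g_0^*\cap\g^*_{\sf sing}$ contains no divisor there is no room for the set $Y\times\g_{>0}^*$ produced by that lemma. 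Hence $\textsl{d}_\eta\gZ_\times$ has the maximal dimension $\bb(\g,\vartheta)$ on a big open subset of $\g^*$. By $(\lozenge_2)$, $\gZ_\times$ is a polynomial algebra on $\bb(\g,\vartheta)$ bi-homogeneous generators (Theorem~\ref{free-main}; note $(\lozenge_1)$ is exactly $\ind\g_{(0)}=\rk\g$), so I would invoke Theorem~\ref{ppy-max}: the associated polynomial $\bff$ of~\eqref{e-bff} for these generators must be a nonzero constant, since $\textsl{d}$ of the generators is nonvanishing off a set of codimension $\ge 2$; therefore $\gZ_\times$ is algebraically closed in $\gS(\g)$.

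Once $\gZ_\times$ is algebraically closed, the inclusion $\gS(\g_0)^{\g_0}\subset\gZ_\times$ follows: among the generators of $\gZ_\times$ we have the $\rk\g_0$ nonzero components $H_{j,0}\in\gS(\g_0)^{\g_0}$ (Lemma~\ref{lm:outer-inv}), and by the standard argument (restriction of Kostant generators, cf.\ the proof of Corollary~\ref{cor2:main4}) the field $\mK(H_{j,0})$ has transcendence degree $\rk\g_0=\trdeg\gS(\g_0)^{\g_0}$, so every element of $\gS(\g_0)^{\g_0}$ is algebraic over $\gZ_\times$, hence lies in it. This gives $\tilde\gZ=\mathsf{alg}\langle\gZ,\gS(\g_0)^{\g_0}\rangle=\gZ$, and also $\gZ=\gZ_\times$ because with $\g_0^*\cap\g^*_{\sf sing}$ containing no divisor we are in the situation where $0\in\BP_{\sf reg}$ and $\cz_\infty$ is not needed (here one splits into the abelian/non-abelian cases exactly as in Corollaries~\ref{cor1:main4} and~\ref{cor2:main4}; the point $\infty$ is either singular, or, when $\g_0$ is abelian, contributes only $\gS(\g_0)\subset\gS(\g_0)^{\g_0}\subset\gZ_\times$).

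For maximality, suppose $\ca\supset\gZ_\times$ is a Poisson-commutative subalgebra of $\gS(\g)^{\g_0}$. By~\eqref{eq:MY19}, $\trdeg\ca\le\bb(\g,\vartheta)=\trdeg\gZ_\times$, so $\ca$ is algebraic over $\gZ_\times$. Since $\gZ_\times$ is algebraically closed in $\gS(\g)$, this forces $\ca=\gZ_\times$. Thus $\gZ_\times=\gZ=\tilde\gZ$ is a maximal Poisson-commutative subalgebra of $\gS(\g)^{\g_0}$.

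The main obstacle is the first step: verifying that the polynomial $\bff$ of~\eqref{e-bff} is constant, i.e.\ that the differentials of the bi-homogeneous generators are linearly independent off a subset of codimension $\ge 2$. This is precisely what Lemma~\ref{D} together with the divisor-free hypothesis on $\g_0^*\cap\g^*_{\sf sing}$ is designed to deliver, but one has to be careful that the bad locus of $\gZ_\times$ is genuinely a closed subset whose components are all divisors unless empty --- a codimension-one degeneracy statement --- so that ``no divisor'' really means ``codimension $\ge 2$''. Everything after that is a formal consequence of algebraic closedness and the transcendence-degree inequality.
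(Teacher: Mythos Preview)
Your approach is essentially the paper's: Lemma~\ref{D} plus the divisor-free hypothesis forces the differentials of the free generators of $\gZ_\times$ to be linearly independent on a big open set, so $\bff$ is constant, Theorem~\ref{ppy-max} makes $\gZ_\times$ algebraically closed in $\gS(\g)$, and maximality follows from the transcendence-degree bound~\eqref{eq:MY19}.

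The paper's ordering is leaner. It proves maximality of $\gZ_\times$ \emph{first}; then $\gZ=\gZ_\times$ and $\tilde\gZ=\gZ_\times$ (and hence $\gS(\g_0)^{\g_0}\subset\gZ_\times$) drop out immediately, because $\gZ$ and $\tilde\gZ$ are Poisson-commutative subalgebras of $\gS(\g)^{\g_0}$ containing $\gZ_\times$. Your separate derivation of $\gZ=\gZ_\times$ via the abelian/non-abelian case split is an unnecessary detour, and in fact the abelian branch is incomplete: the identification $\cz_\infty=\gS(\g_0)$ used in Corollary~\ref{cor2:main4} requires $\vth$ inner (so that $\rk\g_0=\rk\g=\ind\g_{(\infty)}$), whereas the present theorem does not assume $(\lozenge_4)$. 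This gap is harmless only because your maximality paragraph already implies $\gZ=\gZ_\times$ without any case analysis. Reordering your argument to match the paper's --- maximality first, equalities as corollaries --- both shortens it and removes the issue.
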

\begin{proof}
We know that $\gZ_\times$ is a polynomial algebra (Theorem~\ref{free-main}) and 
$\trdeg\gZ_\times=\bb(\g,\vartheta)$ (Theorem~\ref{thm-trdeg}). 
Suppose that $\gZ_\times\subset \ca\subset  \gS(\g)^{\g_0}$ and $\ca$ is Poisson-commutative. Then we have 
$\trdeg \ca\le \bb(\g,\vartheta)$ by~\eqref{eq:MY19}.  In view of Lemma~\ref{D}, 
the differentials $\textsl{d} F_i$ of the algebraically independent generators $F_i\in\gZ_\times$ 
are linearly independent on a big open subset. Then by Theorem~\ref{ppy-max}, 
$\gZ_\times$ is an algebraically closed subalgebra of $\gS(\g)$. In particular, we must have 
$\ca=\gZ_\times$. This applies to $\gZ$ and $\tilde\gZ$ as well. 
\end{proof}

\begin{rmk}
Conditions $(\lozenge_1)$--$(\lozenge_4)$ are satisfied for involutions $\vartheta$ that have {\sf g.g.s.}.  
\end{rmk}

\begin{ex} \label{ex-max}
Theorem~\ref{thm:no-D} applies to several outer automorphisms of semisimple Lie algebras, 
for instance, to $\vth\in\Aut(\g)$ of order $2m$ in case $\g=(\gt{sl}_{2n})^{m}$ and
$\g_0$ is a diagonally embedded $\gt{sp}_{2n}$, cf.~\cite[(4.2)]{p09}. 
Automorphisms of this form are considered in Section~\ref{sec-twist}.  
\end{ex}

Theorem~\ref{thm:from-09}  provides a bunch of automorphisms
$\vth$ with $\oth\ge 3$ such that $\tilde\gZ$ is a polynomial algebra,  
see Examples~5.9 and 5.10 in~\cite{p09}. In all these cases, $\gZ\ne\tilde\gZ$ and   
we conjecture that $\tilde\gZ$ is a maximal Poisson-commutative subalgebra of 
$\gS(\g)^{\g_0}$.  

Summarising our previous considerations, we can say that in order to guarantee some good properties of 
the Poisson-commutative subalgebras $\gZ_\times$,$\gZ$, and $\tilde\gZ$,  the following properties of
$\vth\in\Aut(\g)$ and thereby of $\g_{(0)}$ are needed:
\begin{itemize}
\item[\sf (a)] \ $\ind\g_{(0)}=\ind\g$;
\item[\sf (b)] \ $\g_{(0)}$ has the {\sl codim}--$2$ property;
\item[\sf (c)] \ $\vth$ admits a {\sf g.g.s.}
\end{itemize}
If $\oth=2$, then {\sf (a)} and {\sf (b)} are always satisfied~\cite{p07}, and a complete description of involutions
admitting a {\sf g.g.s.} is available~\cite{Y-imrn}.
In a forthcoming paper, we are going to undertake a thorough substantial investigation of these properties
for arbitrary $\vth$.

%%%%%%%%%%%     Section   %%%%%%%%%%%%%%%%%
\section{The case of a cyclic permutation} 
\label{sect:cyc} 

\noindent
Let $\h$ be a simple non-abelian Lie algebra and $\g=\h^{m}$ the direct sum of $m\ge 2$ copies of 
$\h$. Then $l=\rk\g=m{\cdot}\rk\h$.
Let $\vth\in\Aut(\g)$ be a cyclic permutation of the summands of $\g$. That is,
\[
    \vth(x_1,\dots,x_m)=(x_m,x_1,\dots,x_{m-1}), \ x_i\in\h .
\]
Then, for $i=0,1,\dots,m-1$ and $\zeta=\sqrt[m]1$, we have 
\beq    \label{eq:isom}
     \g_i= \{(x,\zeta^{-i}x,\dots,\zeta^{(1-m)i}x)\mid x\in\h\} .
\eeq
In particular, $\g_0=\Delta_\h\simeq\h$ is the diagonal and each $\g_i$ is isomorphic to $\h$ as vector 
space and as $\g_0$-module.
Here the Lie algebra $\g_{(0)}$ is isomorphic to the truncated current algebra
\[
   \hm:=\h\otimes \bbk[t]/(t^{m})=\h[t]/(t^{m}),
\]
see~\cite[Corollary\,3.6]{p09}. The isomorphism $\g_{(0)}\simeq \hm$ takes $\g_i$ to
the image of $\h t^i\subset \h\otimes \bbk[t]$ in $\hm$.
The Lie algebra $\hm$ is also known as a (generalised) {\it Takiff algebra} modelled on $\h$.
By~\cite[Theorem\,2.8]{rt}, we have $\ind\g_{(0)}=m{\cdot}\rk\h =\rk\g$, i.e., $0\in\BP_{\sf reg}$. 
It then follows from Theorem~\ref{thm-trdeg} that in this case 
\[
\trdeg\gZ=\bb(\h^{m},\vth)=\frac{1}{2}\bigl((m-1)\dim\h+(m+1)\rk\h\bigr)=(m-1)\bb(\h)+\rk\h .
\]
On the other hand, $\g_0$ is not abelian, hence $\BP_{\sf sing}=\{\infty\}$ and $\cz_\infty$ is not 
required for $\gZ$. Since $\hm$ is $\BN$-graded and the zero part is semisimple, the nilpotent radical 
$\hm^{u}$ is equal to $\bigoplus_{i=1}^{m-1} \h t^i$. Comparing this with Proposition~\ref{prop:graded-limits} 
on the graded structure of $\q_{(\infty)}$, we conclude that here $\g_{(\infty)}\simeq\hmi^{u}$. 
Since $\ind\g_{(\infty)}$ is computed for any $\vth$ in Theorem~\ref{thm:ind-inf}, we obtain a new 
observation that
\beq      \label{eq:ind-nil-T}
   \ind\hm^{u}=(m-2)\rk\h+\dim\h .
\eeq
Upon the identification $\g_1\simeq\h$, see Eq.~\eqref{eq:isom}, an element $x\in\h$ is nilpotent 
(resp. semisimple, regular) in $\h$ if and only if $(x,\zeta^{-i}x,\dots,\zeta^{(1-m)i}x) \in\g_1$ is 
nilpotent (resp. semisimple, regular) in $\g$. This also implies that the null-cone $\cN_1$ is isomorphic to 
the null-cone of $\h$. Hence $\cN_1$ is irreducible. 
Thus, $\vth$ is both $\gS$-regular and very $\gN$-regular. 

Let $\{H_1,\dots,H_l\}$ be s set of homogeneous generators of $\gS(\g)^\g$ consisting of 
$\vth$-eigenvectors. Since $\vth$ is $\gS$-regular and very $\gN$-regular, it follows from 
Theorem~\ref{thm:from-09} that $\g_{(0)}$ has the {\sl codim}--$2$-property and
$\cz_0=\bbk[H^\bullet_1,\dots,H^\bullet_l]$, see also \cite{rt}. The last relation also means that $\{H_j\}$ 
is a {\sf g.g.s.} for $\vth$.

\begin{thm}     \label{thm-cyc} 
If\/ $\g=\h^m$ and $\vth$ is a cyclic permutation, then the algebra $\gZ=\gZ(\h^m,\vth) $ is freely 
generated by the nonzero bi-homogeneous components $H_{j,i}$, $1\le j\le l=m{\cdot}\rk\h$. Moreover, 
$\gZ$ is a \emph{maximal} Poisson-commutative subalgebra of\/ $\gS(\g)^{\g_0}$.
\end{thm}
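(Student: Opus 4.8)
The plan is to derive both statements from the general machinery of Sections~\ref{sect:free} and~\ref{sect:max}, all of whose hypotheses on $\vth$ have already been verified for the cyclic permutation in the paragraphs above. The relevant input is: $\g_{(0)}\simeq\hm$ is a Takiff algebra, so that $\ind\g_{(0)}=m{\cdot}\rk\h=\rk\g$ and $0\in\BP_{\sf reg}$; since $\vth$ is $\gS$-regular and very $\gN$-regular, Theorem~\ref{thm:from-09} gives that $\g_{(0)}$ has the {\sl codim}--$2$ property, that $\cz_0=\bbk[H_1^\bullet,\dots,H_l^\bullet]$, and that every $\vth$-eigenbasis $\{H_1,\dots,H_l\}$ of $\gS(\g)^\g$ is a {\sf g.g.s.}; and $\g_0=\Delta_\h$ is non-abelian, so $\BP_{\sf sing}=\{\infty\}$ by Corollary~\ref{cor:infty=reg}.

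For the first assertion I would apply Theorem~\ref{free-main}, which is available because $\vth$ admits a {\sf g.g.s.} and $\ind\g_{(0)}=\rk\g$, followed by Corollary~\ref{cor1:main4}, which applies because in addition $\g_{(0)}$ has the {\sl codim}--$2$ property and $\g_0$ is non-abelian. Together they say exactly that $\gZ=\gZ_{\times}$ is the polynomial algebra freely generated by the nonzero bi-homogeneous components $H_{j,i}$ of a {\sf g.g.s.} consisting of $\vth$-eigenvectors.

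For maximality I would invoke Theorem~\ref{thm:no-D}. Its conditions $(\lozenge_1)$, $(\lozenge_2)$, $(\lozenge_3)$ are precisely the three properties of $\g_{(0)}$ recorded above, so it remains only to check that $\g_0^*\cap\g^*_{\sf sing}$ contains no divisor. Identifying $\g^*$ with $\g$ and $\g_0^*$ with $\g_0=\Delta_\h$ via the Killing form, and $\h$ with $\Delta_\h$ via $x\mapsto(x,\dots,x)$, one has $\g^{(x,\dots,x)}=(\h^x)^m$, so $(x,\dots,x)$ is $G$-regular if and only if $x$ is regular in $\h$; hence $\g_0^*\cap\g^*_{\sf sing}$ is identified with $\h\setminus\h_{\sf reg}$, which has codimension $3$ in $\h$ by the Kostant {\sl codim}--$3$ property of the semisimple algebra $\h$. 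In particular it has no divisors, so Theorem~\ref{thm:no-D} yields $\gS(\g_0)^{\g_0}\subset\gZ_{\times}$ and that $\gZ=\gZ_{\times}=\tilde\gZ$ is a maximal Poisson-commutative subalgebra of $\gS(\g)^{\g_0}$.

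I expect no serious difficulty here: this theorem is largely a repackaging of earlier results, the real work having been done upstream by the Ra\"is--Tauvel computation of $\ind\hm$ and by the $\gS$- and very $\gN$-regularity of the cyclic permutation. The one step that calls for a little care is the divisor-freeness of $\g_0^*\cap\g^*_{\sf sing}$; there one must spell out the Killing-form identification of $\g_0^*$ with the diagonal $\Delta_\h$ and observe that $G$-regularity of a diagonal element amounts to $H$-regularity of its single component, after which Kostant's {\sl codim}--$3$ bound for $\h$ finishes the proof.
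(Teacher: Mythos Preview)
Your proposal is correct and follows essentially the same route as the paper: verify conditions $(\lozenge_1)$--$(\lozenge_4)$ from the preceding discussion, apply Corollary~\ref{cor1:main4} for the free generation of $\gZ$, and then invoke Theorem~\ref{thm:no-D} after checking that $\g_0^*\cap\g^*_{\sf sing}$ contains no divisor via the identification with $\h\setminus\h_{\sf reg}$ and Kostant's {\sl codim}--$3$ property. The paper's proof is slightly terser but makes exactly the same moves.
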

\begin{proof}
The above discussion shows that conditions $(\lozenge_1)$--$(\lozenge_4)$ hold for $\vth$.  
Hence $\gZ$ is freely generated by the nonzero bi-homogeneous components $H_{j,i}$ by Corollary~\ref{cor1:main4}.

A point $\xi\in\g_0^*$ is regular in $\g^*$ if and only if $\xi$ is regular in $\g_0^*\simeq\h^*$. 
Thereby  
\[   
      \dim(\g_{0}^*\cap \g^*_{\sf sing})=\dim\h-3  
\] 
and this intersection does not contain divisors of $\g_0^*$. Therefore $\gZ$   
is a maximal  Poisson-com\-mu\-ta\-tive  subalgebra of $\gS(\g)^{\g_0}$ by Theorem~\ref{thm:no-D}. 
\end{proof}

It is not hard to produce a generating set $\{H_j\}\subset\gS(\g)^{\g}$ consisting of
$\vth$-eigenvectors. Suppose that $\rk\h=r$ and that $\{ F_1,\dots, F_r \}$ is a generating set of 
$\gS(\h)^{\h}\subset \gS(\g)$
for the first copy of $\h$ in $\g$. For $0 \le k < m$, set 
\begin{equation}              \label{ik}
       F_i^{[k]}=\frac{1}{m}\left(F_i+\zeta^k\vth(F_i)+\zeta^{2k}\vth^2(F_i)+\ldots + \zeta^{k(m-1)}\vth^{m-1}(F_i)\right).
\end{equation}  
Then $\vth(F_i^{[k]})=\zeta^{-k} F_{i}^{[k]}$. Thus, we can take
$\{H_j \mid 1\le j\le l\}=\{F_{i}^{[k]}\mid 1\le i\le r, 0\le k < m\}$. 

In Section~\ref{sect:Gaudin}, we will need an explicit description of the bi-homogeneous 
components $H_{j,i}$, which exploits the polarisation construction of \cite{rt}. Suppose that each $F_i$ is 
homogeneous and  $b_i=\deg F_i$. 
 
For $\xi\in\h^*$, let $\xi^{(k)}\in\g^*$ be the corresponding element sitting in the $k$-th copy of $\h^*$ in 
$\g^*$.   Then the induced action of $\vth$ in $\g^*$ is given by the formula:
\[
    \vth\bigl(\xi^{(1)}_1+\ldots+\xi^{(m-1)}_{m-1}+ \xi^{(m)}_m\bigr)=\xi^{(1)}_2+\ldots+\xi^{(m-1)}_m+ \xi^{(m)}_1 .
\]
Accordingly, $\g_j^*=\{\xi^{(1)}+\zeta^j\xi^{(2)}+\ldots+\zeta^{(m-1)j} \xi^{(m)}\mid \xi\in\h^*\}$ and
we fix the corresponding isomorphisms $\nu_j: \h^*\isom \g_j^*$, where 
\beq         \label{iso-s}
          \xi \overset{\nu_j}{\longmapsto} \left(\xi^{(1)}+\zeta^j\xi^{(2)}+\ldots+\zeta^{(m-1)j} \xi^{(m)}\right) \in \g_j^*.
\eeq
For $\xi_0,\ldots,\xi_{m-1}$ with $\xi_j\in \g_j^*$, let 
$\bar\xi=(\xi_0,\ldots,\xi_{m-1})$ be the corresponding element of $\g^*$ and 
$\xi_0+\ldots+\xi_{m-1}$ be an element of $\h ^*$. 
For  $s\in\mK$, set $\phi_s(\bar\xi)=(s^{m-1}\xi_0,s^{m-2}\xi_1,{\ldots},\xi_{m-1})$ and 
define the {\it $\phi$-polarisations} $F_{[k]}$ of $F\in\gS^d(\h )$  in the following way 
\begin{multline}         \label{s-pol}
      F(\phi_s(\bar\xi)):=F(s^{m-1}\xi_0{+}s^{m-2}\xi_1{+}\ldots{+}\xi_{m-1})=: \\
      F_{[0]}(\bar\xi)+sF_{[1]}(\bar\xi) +\ldots +s^{d(m-1)}F_{[d(m-1)]}(\bar\xi).
\end{multline}
It is convenient to assume that $F_{[k]}=0$ for $k>d(m-1)$. 

\begin{ex}           \label{ex-pol-phi}
If $x\in\gt h$ and $0\le k<m$, then $x_{[k]}=
\frac{1}{m}(x,\zeta^{-i}x,\dots,\zeta^{(1-m)i}x)\in\g_{i}$ with $i=m-1-k$.
\end{ex}

According to~\cite{rt}, $\cz_0$ is freely generated by $(F_i)_{[k]}$ with $1\le i\le r$, $0\le k<m$, where 
$\gS(\h)^\h=\mK[F_1,\ldots,F_r]$.  

\begin{prop}                 \label{pol}
We have  $\{H_{j,i}\,{\ne}0 \mid 1{\le} j {\le} l, \,0{\le} i\}=\{(F_i)_{[k]} \mid 1\le i\le r, 0\le  k\le b_i(m{-}1)\}$, 
where $(F_i)_{[k]}$ are the $\phi$-polarisations of $F_i$  defined by~\eqref{s-pol}. 
\end{prop}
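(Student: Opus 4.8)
The plan is to compute both families explicitly in terms of the two rescalings that occur here — the one–parameter group $\vp$ that detects bi-homogeneous components and the map $\phi$ of~\eqref{s-pol} — and to link them through a single clean observation. Write $\g=\h^m$ as a sum of $m$ copies $\h^{(1)},\dots,\h^{(m)}$ of $\h$, and for $F\in\gS(\h)$ let $F^{(p)}\in\gS(\h^{(p)})\subset\gS(\g)$ be the copy of $F$ sitting in the $p$-th summand; then $F^{(p)}=\vth^{p-1}(F^{(1)})$ and, by~\eqref{ik}, $F_a^{[k]}=\frac1m\sum_{p=1}^m\zeta^{(p-1)k}F_a^{(p)}$. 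The observation is that $\vp_\zeta=\vth$ as linear maps on $\g$ (both act on $\g_j$ by the scalar $\zeta^j$), hence on $\gS(\g)$. Consequently, for any $G\in\gS(\g)$ the bi-homogeneous component $G_d$ of $\vp$-degree $d$ satisfies $\vth(G_d)=\vp_\zeta(G_d)=\zeta^d G_d$, i.e.\ $G_d$ is a $\vth$-eigenvector of eigenvalue $\zeta^d$; moreover $\vth$ commutes with every $\vp_s$ (both are scalar on each $\g_j$), so taking bi-homogeneous components commutes with $\vth$.

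First I would handle the plain copy $F^{(1)}$. Regard $\gS(\g)=\bbk[\g^*]$ with $\g^*$ decomposed both as $m$ copies of $\h^*$ and as $\bigoplus_j\g_j^*$; the $\vp$-action on $\gS(\g)$ is $G\mapsto G\circ B_s$, where $B_s\colon\g^*\to\g^*$ multiplies $\g_j^*$ by $s^j$. A direct check shows that the ``sum map'' $\bar\xi=(\xi_0,\dots,\xi_{m-1})\mapsto\xi_0+\dots+\xi_{m-1}\in\h^*$ of~\eqref{s-pol} (after the identifications $\nu_j$) is simply the projection of $\g^*$ onto the first copy of $\h^*$, so that $F(\phi_s\bar\xi)=F^{(1)}(\phi_s\bar\xi)$. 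Since $\phi_s$ multiplies $\g_j^*$ by $s^{m-1-j}$, one has $\phi_s=s^{m-1}B_{1/s}$; hence, for $F$ homogeneous of degree $b$,
\[
 \sum_{k}s^k F_{[k]}=F(\phi_s\bar\xi)=F^{(1)}\bigl(s^{m-1}B_{1/s}\bar\xi\bigr)=s^{(m-1)b}\bigl(\vp_{1/s}(F^{(1)})\bigr)(\bar\xi)=\sum_d s^{(m-1)b-d}(F^{(1)})_d ,
\]
so $F_{[k]}=(F^{(1)})_{(m-1)b-k}$. In particular $F_{[k]}=0$ unless $0\le k\le b(m-1)$, because $\vp_s(F^{(1)})$ only involves the powers $s^0,\dots,s^{b(m-1)}$; and, combined with the first paragraph, $F_{[k]}$ is a $\vth$-eigenvector of eigenvalue $\zeta^{(m-1)b-k}$. (For $b=1$ this recovers Example~\ref{ex-pol-phi}.)

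Next I would pass to the eigenvectors $H_j=F_a^{[k]}$. Taking the $\vp$-degree-$d$ component of $F_a^{[k]}=\frac1m\sum_p\zeta^{(p-1)k}F_a^{(p)}$ and using $(F_a^{(p)})_d=\vth^{p-1}\bigl((F_a^{(1)})_d\bigr)=\zeta^{d(p-1)}(F_a^{(1)})_d$ gives
\[
 (F_a^{[k]})_d=\Bigl(\tfrac1m\sum_{p=1}^m\zeta^{(p-1)(k+d)}\Bigr)(F_a^{(1)})_d=
  \begin{cases}(F_a^{(1)})_d,& k+d\equiv 0\pmod m,\\ 0,&\text{otherwise.}\end{cases}
\]
Hence every nonzero $H_{j,i}=(F_a^{[k]})_i$ equals $(F_a^{(1)})_i=(F_a)_{[(m-1)b_a-i]}$, with $0\le(m-1)b_a-i\le b_a(m-1)$ by the previous step; and conversely every $\phi$-polarisation $(F_a)_{[\ell]}$ with $0\le\ell\le b_a(m-1)$ equals $(F_a^{(1)})_{(m-1)b_a-\ell}=(F_a^{[k]})_{(m-1)b_a-\ell}$ for the unique $k\in\{0,\dots,m-1\}$ with $k\equiv b_a+\ell\pmod m$, so it occurs among the $H_{j,i}$. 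The assignment $(a,\ell)\mapsto(F_a^{[k]})_{(m-1)b_a-\ell}$ is injective (fixing $a$, it determines the $\vp$-degree, hence $\ell$), so the right-hand set of the Proposition injects into the collection of all bi-homogeneous components and has cardinality $\sum_{a=1}^r\bigl(b_a(m-1)+1\bigr)=(m-1)\bb(\h)+\rk\h=\bb(\h^m,\vth)$. By Theorem~\ref{free-main}, the nonzero $H_{j,i}$ are pairwise distinct and also number $\bb(\h^m,\vth)$; since they lie inside the image of that injection, the two sets must coincide — and, as a by-product, every $(F_a)_{[\ell]}$ with $0\le\ell\le b_a(m-1)$ is nonzero.

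The one genuinely fiddly point is the middle paragraph: setting up the two bases of $\g$ and $\g^*$ (copies versus $\vth$-eigenspaces), matching the rescalings $\phi_s$ and $B_s=\vp_s^{\top}$ on $\g^*$, and checking that the ``sum map'' of~\eqref{s-pol} is the first-copy projection. Once that bookkeeping is in place, the rest is formal, resting only on the identity $\vp_\zeta=\vth$ and on the Vandermonde-type orthogonality relations $\sum_{p=1}^m\zeta^{(p-1)n}=m$ or $0$ according as $m\mid n$ or not.
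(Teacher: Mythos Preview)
Your proof is correct and follows essentially the same route as the paper's: you establish the key identity $(F)_{[k]}=(F^{(1)})_{(m-1)b-k}$ linking $\phi$-polarisations to bi-homogeneous components of $F^{(1)}$, then use the definition of $F_a^{[k]}$ together with orthogonality to see that every nonzero $H_{j,i}$ is a $\phi$-polarisation, and finish by the same counting argument (the paper cites Theorem~\ref{thm-cyc}, you cite Theorem~\ref{free-main}, which amounts to the same thing here). Your version is simply more explicit than the paper's --- in particular the observation $\vp_\zeta=\vth$ and the identification of the ``sum map'' with the projection onto the first copy are exactly what the paper compresses into the sentence ``the combination of~\eqref{iso-s} and the definition of $\varphi_s$ leads to\dots''.
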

\begin{proof}
Now we need to distinguish the first copy $\gt h^{(1)}\subset\gt g$ of $\h$ from an abstract $\h$. 
Suppose that 
$F^{(1)}\in\gS^d(\gt h^{(1)})$ is the image of $F\in\gS^d(\gt h)$ under the tautological isomorphism 
$\gS(\gt h)\simeq\gS(\gt h^{(1)})$. 
The combination of~\eqref{iso-s} and the definition of $\varphi_s$, see Section~\ref{subs:contr-&-inv}, 
leads to 
\[
     \varphi_s(F^{(1)})=F_{[(m-1)d]} + sF_{[(m-1)d-1]}+\ldots 
+s^{(m-1)d-1} F_{[1]}+s^{(m-1)d} F_{[0]}
\]
for $s\in\mK^\star$. In this notation, $\vth(F_{[k]})=\zeta^{-d-k}F_{[k]}$. Next we plug the formula for 
$\varphi_s(F^{(1)})$ into~\eqref{ik} and conclude that each bi-homogeneous component of each 
$F_{i}^{[k]}$ is a $\phi$-polarisation of $F_{i}\in\gS^{b_i}(\gt h)$.  By Theorem~\ref{thm-cyc},
the total number of the nonzero bi-homogeneous components $(F_{i}^{[k]})_{i'}$ is 
$\trdeg\gZ=\bb(\g,\vth)=(m-1){\cdot}\bb(\h)+r$. Since $\h$ is semisimple, we have  
$\bb(\h)=\sum_{i=1}^r b_i$. Finally, the total number of $\phi$-polarisations $(F_i)_{[k]}$ equals 
\[ 
   r+(m-1)\sum_{i=1}^r b_i=r+(m-1){\cdot}\bb(\h)=\bb(\g,\vth) . 
\]
Hence the two sets in question coincide. 
\end{proof}

%%%%%%%%%%%    Section   %%%%%%%%%%%%%%%
\section{Gaudin subalgebras}
\label{sect:Gaudin}

\noindent
Let $\h $ be the same as in the previous section. The enveloping algebra $\U(t^{-1}\h [t^{-1}])$
contains a remarkable  commutative subalgebra $\z(\widehat{\h })$, which is known as
the {\it Feigin--Frenkel centre} ~\cite{ff:ak} (see also \cite{f:lc}). This subalgebra is defined as the centre 
of the universal affine vertex algebra associated with the affine Kac--Moody algebra $\widehat{\h }$
at the critical level. In particular, each element of $\z(\widehat{\h })$ is annihilated by the
adjoint action of $\h $. The elements of $\z(\wg)$ give rise to higher Hamiltonians of the Gaudin model, 
which describes a completely integrable quantum spin chain \cite{FFRe}.

A Gaudin model consists of $n$ copies of $\h $ and the Hamiltonians
\[
{\mathcal H}_k=\sum_{j\ne k} \frac{\sum_{i=1}^{\dim\h } x_i^{(k)} x_i^{(j)}}{z_k-z_j}  , \enskip 1\le k\le n, 
\]
where $z_1,\ldots,z_n\in\mK$ are pairwise distinct and $\{x_i^{(k)}\mid 1\le i\le\dim\h \}$ is 
a basis for the $k$-th copy of $\h$ that is orthonormal w.r.t. $\varkappa$. 
Letting $\g=\h^{n}$, these Gaudin Hamiltonians $\{\mathcal H_k\}$ can be regarded as elements of either 
$\U(\h )^{\otimes n}\simeq\U(\g)$ or $\gS(\g)$.  
They commute (and hence Poisson-commute) with each other. Then higher Gaudin Hamiltonians are 
 elements of $\U(\h )^{\otimes n}$ that commute with all ${\mathcal H}_k$.

Recall the construction of~\cite{FFRe} that provides a {\it Gaudin subalgebra} ${\mathcal G}$
of $\U(\h )^{\otimes n}$. 
Set $\wg_-=t^{-1}\h [t^{-1}]$ and let $\Delta\U(\wg_-)\simeq\U(\wg_-)$ be the diagonal of 
$\U(\wg_-)^{\otimes n}$. Then any vector $\vec z=(z_1,\ldots,z_n)\in (\bbk^\star)^n$ defines 
a natural homomorphism $\rho_{\vec z}\!:\Delta\U(\wg_-) \to \U(\h )^{\otimes n}$, where 
\[
 \rho_{\vec z}(x t^k) = z_1^k x^{(1)}+z_2^k x^{(2)}+\ldots + z_n^k x^{(n)} \ \text{ for } \ x\in\h.
\] 
By definition, ${\mathcal G}={\mathcal G}(\vec z)$ is the image of $\z(\wg)$ under $\rho_{\vec z}$, and 
one can prove that ${\mathcal G}$ contains ${\mathcal H}_k$ for all $k$. 
Hence $[{\mathcal G}(\vec z),{\mathcal H}_k]=0$ for each $k$.   
One also has ${\mathcal G}\subset \U(\g)^{\Delta\h}$ by the construction. 

\begin{rmk}
Gaudin subalgebras have attracted a great deal of attention, see e.g. \cite{G-07} and references therein. 
It is standard to work with complex Lie algebras in this framework. Gaudin algebras are closely related 
to {\it quantum MF  subalgebras} \cite{r:si} and share some of their properties. In particular, for a generic 
$\vec z\in(\mathbb R)^n$, the action of $\Gz$ on an irreducible finite-dimensional 
$\g$-module $V(\lambda_1)\otimes\ldots\otimes V(\lambda_n)$ is diagonalisable and has a simple 
spectrum on the subspace of highest weight vectors w.r.t.~the diagonal 
$\h\simeq\Delta_\h\subset \h^{\oplus n}$~\cite{L-Gau}. 
\end{rmk}

The cyclic permutation $\vth$ is an automorphism of $\g$ of order $n$. 
Let now $\zeta$ be a primitive  $n$-th  root  of unity. 
Then the $\vth$-eigenspace $\g_j$ corresponding to $\zeta^j$ is 
\beq       \label{gj-n}
\g_j=\{x^{(1)} + \zeta^{-j} x^{(2)}+\zeta^{-2j} x^{(3)}+ \ldots +\zeta^{j(1-n)} x^{(n)} \mid  x \in \h\}\simeq \h.
\eeq  
Let $\bar t$\/ denote the image of $t^{1-n}$ in $\mK[t^{-1}]/(t^{-n}-1)$. 
Then the quotient $\h[t^{-1}]/(t^{-n}-1)$ has the canonical $\BZ_n$-grading
$\h\oplus \h \bar t\oplus \h\bar t^2 \oplus\ldots\oplus \h{\bar t}^{n-1}$. In particular, 
$\h[t^{-1}]/(t^{-n}-1)\simeq \g$ as a $\Z_n$-graded Lie algebra. Fix 
an isomorphism $\g_j\to \bar t^j \h$ as the projection {\sf pr}$_1\!: \g\to\h$ on the first summand 
of $\g$ combined with the multiplication by $\bar t^j$. 
Now we  regard $\g$ as the quotient of $\h[t^{-1}]$.   

The above identifications related to $\vth$ provide a simpler approach to constructing certain
Gaudin subalgebras.

\begin{prop}    \label{prop:new-G}
Take $z_k=\zeta^{1-k}$ and consider the corresponding Gaudin subalgebra $\cG=\Gz$ in 
$\U(\h)^{\otimes n}$. Then
$\cG$ coincides with the image of $\z(\wh)$ in the quotient $\U(\wh_-)/(t^{-n}{-}1)=\U(\g)$.
\end{prop}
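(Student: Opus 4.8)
The plan is to identify $\rho_{\vec z}$, for the special value $z_k=\zeta^{1-k}$, with the homomorphism $\U(\wg_-)\to\U(\g)$ induced by the surjective Lie algebra map $\wg_-\twoheadrightarrow\g$ obtained by restricting the quotient $\h[t^{-1}]\twoheadrightarrow\h[t^{-1}]/(t^{-n}-1)=\g$ to the subalgebra $\wg_-=t^{-1}\h[t^{-1}]\subseteq\h[t^{-1}]$; this latter map is precisely the one appearing in the statement. Once this identification is established, there is nothing left to do: by definition $\cG=\rho_{\vec z}(\z(\wh))$, so the right-hand side becomes exactly the image of $\z(\wh)\subseteq\U(\wg_-)$ in that quotient.

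Both $\rho_{\vec z}$ and the quotient map are homomorphisms of associative algebras defined on $\U(\wg_-)$, and $\U(\wg_-)$ is generated as an algebra by the Lie algebra $\wg_-$, which is spanned by the elements $xt^{-k}$ with $x\in\h$ and $k\ge 1$. Hence it suffices to compare the two maps on these generators, i.e.\ to check that $\rho_{\vec z}(xt^{-k})$ equals the image of $xt^{-k}$ in $\g$ for all such $x,k$.

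To compute the latter, recall that in $\bbk[t^{-1}]/(t^{-n}-1)$ the element $t^{-1}$ is invertible, with inverse $\bar t=t^{1-n}$, and $\bar t^{\,n}=1$; hence $t^{-k}$ maps to $\bar t^{-k}=\bar t^{\,m}$, where $m\in\{0,1,\dots,n-1\}$ is determined by $m\equiv-k\pmod n$. Therefore $xt^{-k}$ maps to $x\bar t^{\,m}\in\h\bar t^{\,m}$, which under the fixed isomorphism $\h\bar t^{\,m}\isom\g_m$ (projection onto the first summand of $\g$, followed by multiplication by $\bar t^{\,m}$) corresponds, in view of~\eqref{gj-n}, to $\sum_{i=1}^{n}\zeta^{-(i-1)m}x^{(i)}=\sum_{i=1}^{n}z_i^{\,m}\,x^{(i)}$. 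On the other hand $\rho_{\vec z}(xt^{-k})=\sum_{i=1}^{n}z_i^{-k}\,x^{(i)}$ by the definition of $\rho_{\vec z}$; and since each $z_i=\zeta^{1-i}$ is an $n$-th root of unity while $-k\equiv m\pmod n$, we have $z_i^{-k}=z_i^{\,m}$ for every $i$. Thus the two elements coincide, the two algebra homomorphisms agree on generators, and the proposition follows.

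The argument is essentially formal, resting only on the universal property of the enveloping algebra and on the fact that $\wg_-\to\g$ is already onto. The sole point demanding real attention is the bookkeeping among the three guises of $\g$ — the direct sum $\h^{\oplus n}$, the $\vth$-eigenspace description~\eqref{gj-n}, and the quotient $\h[t^{-1}]/(t^{-n}-1)$ — and, in particular, keeping the signs in the exponents of $\zeta$ consistent across these identifications. Conceptually, the statement simply records that specialising the Gaudin parameters $z_k$ to the $n$-th roots of unity forces $\rho_{\vec z}$ to factor through the finite-dimensional quotient $\g$ of $\h[t^{-1}]$, so that $\cG$ is realised inside $\U(\g)$ directly, without any recourse to evaluation homomorphisms.
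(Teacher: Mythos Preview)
Your proof is correct and follows essentially the same approach as the paper's: both verify that $\rho_{\vec z}$ agrees with the quotient map $\U(\wg_-)\to\U(\wg_-)/(t^{-n}{-}1)$ on the Lie algebra generators $xt^{-k}$, by computing each side explicitly in terms of the $\vth$-eigenspace description~\eqref{gj-n}. The paper's version is terser (it writes $a=nq+j$ with $1\le j\le n$ and lands in $\g_{n-j}$, whereas you take $m\equiv -k\pmod n$ and land in $\g_m$; these are the same index), but the content is identical.
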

\begin{proof}
Take  $a\in\mathbb Z_{>0}$ and write $a=nq+j$ with $1\le j\le n$. Then
for $x t^{-a} \in \h[t^{-1}]$, we have
\[
 \rho_{\vec z}(x t^{-a})=x^{(1)} + \zeta^{j} x^{(2)} +\zeta^{2j} x^{(3)}+ \ldots + \zeta^{j(n-1)}x^{(n)}\in\g_{n-j}.
\]
Therefore $\rho_{\vec z}(x t^{-a})$ identifies with the image $x\bar t^{n-j}$ of $x t^{-a}$ in 
$\h[t^{-1}]/(t^{-n}{-}1)$.
\end{proof}

By a theorem of Feigin and Frenkel~\cite{ff:ak}, $\z(\widehat{\h})$ is a polynomial ring in 
infinitely many variables, with a distinguished set of generators.  Set $\tau=-\partial_t$.  There are 
algebraically independent elements $S_1,\ldots,S_{r} \in \z(\wg)$   such that 
\[
    \z(\widehat{\h})= \bbk[\tau^k (S_i) \mid 1\le i\le r, k\ge 0].
\]
The symbols  $\gr\!(S_i)$ are homogeneous elements of $\gS(\h  t^{-1})$ and if 
$F_i=\gr\!(S_i)|_{t=1}$, then $\bbk[F_1,\ldots,F_r]=\gS(\h)^{\h}$. 
The set $\{S_i\}$ is called a {\it complete set of Segal--Sugawara vectors}.  
Keep the notation $b_i=\deg F_i$. 

\begin{lm} \label{lm-pol}
For $k\ge 0$, let $\bF_{i,k}$ be the symbol of $\rho_{\vec z}(\tau^k(S_i))$. 
In the case  $z_k=\zeta^{1-k}$ for $k=1,\dots,n$, we have then  
\[
    \langle \bF_{i,k} \mid 0\le k\le (n{-}1) b_i\rangle_{\mK}=
\left<(F_i)_{[k]}\mid0\le k\le (n{-}1) b_i\right>_{\mK}
\] 
for the $\phi$-polarisations $(F_i)_{[k]}$ related to $\g^*=\bigoplus_{j=0}^{n{-}1} \g_j^*$, cf.~\eqref{s-pol}. 
\end{lm}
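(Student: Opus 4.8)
The plan is to compute the symbols $\bF_{i,k}$ directly and to recognise them, via a generating function in an auxiliary variable $c$, as $\mK$-linear combinations of the $\phi$-polarisations $(F_i)_{[k]}$.

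First I would use Proposition~\ref{prop:new-G}: for $z_k=\zeta^{1-k}$ the map $\rho_{\vec z}$ is the canonical surjection $\U(\wg_-)\twoheadrightarrow\U(\g)$, which is compatible with the PBW filtrations and whose associated graded map is the quotient $\varpi\colon\gS(\wg_-)\twoheadrightarrow\gS(\g)$, $xt^{-a}\mapsto\overline{xt^{-a}}$, induced by $\wg_-\twoheadrightarrow\g$. As $\tau=-\partial_t$ is a filtration-preserving derivation of $\wg_-$, this gives $\bF_{i,k}=\varpi(\tau^k(\gr S_i))$, which (as the final formula shows) is nonzero, hence really the symbol, for $0\le k\le(n{-}1)b_i$. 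Recall that $\gr S_i=\jmath(F_i)$, where $\jmath\colon\gS(\h)\isom\gS(\h t^{-1})$ is the isomorphism substituting $xt^{-1}$ for $x$, and that $F_i\in\gS^{b_i}(\h)$.

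Next I would introduce $c$ and let $\Phi\colon\gS(\h)\to\gS(\wg_-)[[c]]$ be the algebra homomorphism with $\Phi(x)=\sum_{k\ge0}c^k\,xt^{-1-k}$ for $x\in\h$. By the Leibniz rule the map $F\mapsto\sum_{k\ge0}\tfrac{c^k}{k!}\,\tau^k(\jmath(F))$ is also an algebra homomorphism $\gS(\h)\to\gS(\wg_-)[[c]]$, and since $\tau^k(xt^{-1})=k!\,xt^{-1-k}$ it agrees with $\Phi$ on $\h$; hence the two coincide, so $\sum_{k\ge0}\tfrac{c^k}{k!}\,\tau^k(\gr S_i)=\Phi(F_i)$. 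Applying $\varpi$ and using that $\overline{xt^{-1-k}}$ depends only on $k\bmod n$ — for $0\le k\le n-1$ it is the element of $\g_{n-1-k}$ corresponding to $x$ under the identification of~\eqref{gj-n}, which I write $\pi_m\colon\h\isom\g_m$ — a geometric-series resummation yields, for $x\in\h$,
\[
   \sum_{k\ge0}c^k\,\overline{xt^{-1-k}}=\frac{1}{1-c^n}\sum_{m=0}^{n-1}c^{\,n-1-m}\,\pi_m(x).
\]
Dualising, evaluating $F_i$ at an arbitrary $\eta=\sum_{m}\eta_m\in\g^*$ with $\eta_m\in\g_m^*$, and using the homogeneity of $F_i$ together with the fact that the transpose of $\pi_m$ is an $m$-independent nonzero scalar multiple of $\nu_m^{-1}$ (explicitly $n\,\nu_m^{-1}$), one recognises the result as the generating function~\eqref{s-pol} of the $\phi$-polarisations of $F_i$ (with $s=c$). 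This gives, in $\gS(\g)[[c]]$,
\[
   \sum_{k\ge0}\frac{c^k}{k!}\,\bF_{i,k}=\frac{n^{b_i}}{(1-c^n)^{b_i}}\sum_{k\ge0}c^k\,(F_i)_{[k]}.
\]

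To conclude, $(1-c^n)^{b_i}$ and $(1-c^n)^{-b_i}$ are power series in $c^n$ with constant term $1$, so comparing the coefficient of $c^N$ in the last identity, and in its product with $(1-c^n)^{b_i}$, expresses $\bF_{i,N}$ as a $\mK$-linear combination of $(F_i)_{[N]},(F_i)_{[N-n]},\dots$ and, conversely, $(F_i)_{[N]}$ as a $\mK$-linear combination of $\bF_{i,N},\bF_{i,N-n},\dots$. Hence $\langle\bF_{i,k}\mid 0\le k\le M\rangle_{\mK}=\langle(F_i)_{[k]}\mid 0\le k\le M\rangle_{\mK}$ for every $M\ge0$, and taking $M=(n{-}1)b_i$, together with $(F_i)_{[k]}=0$ for $k>(n{-}1)b_i$, gives the claim. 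I expect the only delicate point to be the bookkeeping in the third paragraph: which graded summand $\overline{xt^{-1-k}}$ lies in, and matching the powers of $c$ coming from iterating $\tau$ with the exponents in~\eqref{s-pol}; the rest is formal manipulation of homomorphisms and of power series.
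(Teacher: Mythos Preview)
Your proof is correct and follows essentially the same route as the paper: both establish that $\tfrac{1}{k!\,n^{b_i}}\bF_{i,k}$ equals $(F_i)_{[k]}$ plus a $\mK$-linear combination of $(F_i)_{[k-n]},(F_i)_{[k-2n]},\dots$, and deduce the span equality from this triangular relation. The difference is only in presentation: the paper writes down the explicit triangular formula (with binomial coefficients coming from the expansion of $(1-c^n)^{-b_i}$) directly, whereas you package the computation as the generating-function identity
\[
\sum_{k\ge0}\frac{c^k}{k!}\,\bF_{i,k}=\frac{n^{b_i}}{(1-c^n)^{b_i}}\sum_{k\ge0}c^k\,(F_i)_{[k]},
\]
using that $e^{c\tau}$ is an algebra homomorphism to avoid tracking the Leibniz combinatorics by hand. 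One small point you gloss over: to conclude that $\varpi(\tau^k(\gr S_i))$ is nonzero (hence really the symbol) for $0\le k\le(n{-}1)b_i$, you need that the $(F_i)_{[k]}$ are themselves nonzero in this range; this follows for instance from Proposition~\ref{pol} (they are algebraically independent), or directly by evaluating at $\xi_0=\dots=\xi_{n-1}$.
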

\begin{proof}
Since $\gr\!(S_i)\in\gS(t^{-1}\h)$, we have
$\bF_{i,0}\in\gS(\g_{n-1})$ and  $\bF_{i,0}=n^{b_i}(F_i)_{[0]}$. 
Assuming $n\ge 2$, we can state that  $\bF_{i,1}=n^{b_i}(F_i)_{[1]}$. 
If $k<n$, then clearly $\frac{1}{k!}\bF_{i,k} =n^{b_i}  (F_i)_{[k]}$.   
More generally, as long as $k\le (n{-}1)b_i$, we have  
\[
    \frac{1}{k! n^{b_i} }\bF_{i,k} =  (F_i)_{[k]} + \sum_{0<j\le k/n} \binom{b_i-j-1}{j} (F_i)_{[k-jn]}, 
\]
where the leading term of $\frac{1}{k!n^{b_i} }\bF_{i,k}$ corresponds to  those summands of
$\gr\!(\tau^k(S_i))$ that belong to $\gS(\h t^{-n}{\oplus}{\ldots}{\oplus}\h t^{-1})$. 
\end{proof}

Let $\gZ\subset\gS(\g)$ be the  Poisson-commutative subalgebra associated with $\vth$. 

\begin{thm}            \label{q}
If $z_k=\zeta^{1-k}$ for $k=1,\dots,n$, then the Gaudin algebra $\Gz\subset\U(\g)$ is a 
quantisation of\/ $\gZ$.
\end{thm}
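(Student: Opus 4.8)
The plan is to prove the theorem in the form $\gr\!\bigl(\cG(\vec z)\bigr)=\gZ$, which by the definition of quantisation recalled in the Introduction is exactly the assertion that $\cG=\cG(\vec z)$ quantises $\gZ$. Throughout, $n$ denotes the order of the cyclic permutation $\vth$ and $\gr$ is taken with respect to the PBW filtration of $\U(\g)$, so that $\gr\,\U(\g)=\gS(\g)$.

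First I would record the soft properties of $\gr(\cG)$. Since $\cG=\rho_{\vec z}(\z(\wh))$ is commutative, $\gr(\cG)$ is a Poisson-commutative subalgebra of $\gS(\g)$. Since $\cG\subseteq\U(\g)^{\Delta\h}=\U(\g)^{\g_0}$ and the adjoint action of the reductive subalgebra $\g_0$ preserves the PBW filtration, for every $a\in\cG$ one has $[x,a]=0$ and hence $\{x,\gr a\}=0$ for all $x\in\g_0$; thus $\gr(\cG)\subseteq\gS(\g)^{\g_0}$. So $\gr(\cG)$ is a Poisson-commutative subalgebra of $\gS(\g)^{\g_0}$.

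Next I would prove the inclusion $\gZ\subseteq\gr(\cG)$. By the Feigin--Frenkel theorem, $\z(\wh)=\bbk[\tau^k(S_i)\mid 1\le i\le r,\ k\ge 0]$, so $\cG$ is generated as an algebra by the elements $\rho_{\vec z}(\tau^k(S_i))$; in particular every symbol $\bF_{i,k}=\gr\,\rho_{\vec z}(\tau^k(S_i))$ lies in $\gr(\cG)$. Fix $i$. By Lemma~\ref{lm-pol}, the $\mK$-span of $\{\bF_{i,k}\mid 0\le k\le (n-1)b_i\}$ equals the span of the $\phi$-polarisations $\{(F_i)_{[k]}\mid 0\le k\le (n-1)b_i\}$; since $\gr(\cG)$ is a linear subspace containing each $\bF_{i,k}$, every such $(F_i)_{[k]}$ lies in $\gr(\cG)$. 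By Proposition~\ref{pol} these $\phi$-polarisations are precisely the nonzero bi-homogeneous components of a \textsf{g.g.s.}\ for $\vth$, and by Theorem~\ref{thm-cyc} they freely generate $\gZ$. Hence $\gZ\subseteq\gr(\cG)$.

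Finally I would close the argument using maximality. By Theorem~\ref{thm-cyc}, $\gZ$ is a \emph{maximal} Poisson-commutative subalgebra of $\gS(\g)^{\g_0}$, so the chain $\gZ\subseteq\gr(\cG)\subseteq\gS(\g)^{\g_0}$ with $\gr(\cG)$ Poisson-commutative forces $\gr(\cG)=\gZ$. (One could also bypass maximality: $\trdeg\gr(\cG)=\trdeg\cG\ge\trdeg\gZ=\bb(\g,\vth)$ while $\trdeg\gr(\cG)\le\bb(\g,\vth)$ by~\eqref{eq:MY19}, so $\gZ\subseteq\gr(\cG)$ is algebraic, and $\gZ$ is algebraically closed in $\gS(\g)$ by Theorem~\ref{ppy-max}, whence equality.) Since essentially all the genuine content already sits in Lemma~\ref{lm-pol}, Proposition~\ref{pol} and Theorem~\ref{thm-cyc}, the only points needing a moment's care are the two ``soft'' facts of the second paragraph --- that $\gr$ carries a commutative $\g_0$-invariant subalgebra into a Poisson-commutative subalgebra of $\gS(\g)^{\g_0}$ --- together with the observation that the span identity of Lemma~\ref{lm-pol} really delivers the membership $(F_i)_{[k]}\in\gr(\cG)$, which works because $\gr(\cG)$ is a full linear subspace of $\gS(\g)$, not merely the subalgebra generated by a chosen generating set of $\cG$.
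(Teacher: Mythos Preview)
Your proof is correct and follows essentially the same approach as the paper's: show $\gZ\subseteq\gr(\cG)$ via Lemma~\ref{lm-pol}, Proposition~\ref{pol}, and Theorem~\ref{thm-cyc}, then invoke the maximality of $\gZ$ in $\gS(\g)^{\g_0}$ from Theorem~\ref{thm-cyc} to conclude equality. You supply more detail than the paper on the two ``soft'' points---that $\gr(\cG)$ is Poisson-commutative and lies in $\gS(\g)^{\g_0}$, and that the span identity of Lemma~\ref{lm-pol} actually yields $(F_i)_{[k]}\in\gr(\cG)$ because $\gr(\cG)$ is a linear subspace---and your alternative closing via transcendence degree and Theorem~\ref{ppy-max} is a valid variant, but the overall strategy is identical.
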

\begin{proof}
By Theorem~\ref{thm-cyc}, $\gZ$ is generated by the bi-homogeneous components 
$H_{j,i}$ with $1\le j\le l$. Proposition~\ref{pol} provides a  description of these components.  
Then by Lemma~\ref{lm-pol},  we have $\gZ\subset\gr\!(\Gz)$. 
Clearly, $\gr\!(\Gz)\subset\gS(\g)^{\g_0}$ is a Poisson-commutative 
subalgebra.  Recall that $\gZ$ is a maximal Poisson-commutative subalgebra of $\gS(\g)^{\g_0}$ by
Theorem~\ref{thm-cyc}. Thus $\gZ =\gr\!(\Gz)$ as required. 
\end{proof}

\begin{rmk}
If $z_k\ne z_j$ for $k\ne j$, then $\trdeg\Gz=(n-1)\bb(\h)+\rk\h$ by~\cite[Prop.~1]{G-07}.
That is, $\trdeg\Gz=\bb(\g,\vth)$ is maximal possible for commutative subalgebras of $\U(\g)^{\g_0}$.
Theorem~\ref{q} shows that one particular Gaudin algebra is actually a maximal commutative 
subalgebra of $\U(\g)^{\g_0}$. Using a connection with the MF subalgebras established in~\cite{r:si},
we can also prove that any such $\Gz$ is a maximal commutative subalgebra of $\U(\g)^{\g_0}$. 
\end{rmk}

\section{Fixed-point subalgebras in the infinite dimensional case} 
\label{sec-twist}

Let $\h $ be the same as before.  Let now $\vartheta\in\Aut(\h)$ be an 
automorphism of order $m$ and  $\zeta=\sqrt[m]{1}$ be primitive.  To any such $\vth$, one associates a
{\it $\vth$-twisted loop algebra} $\h [t,t^{-1}]^{\vartheta}=(\h [t,t^{-1}])^{\vartheta}$, where 
$\vartheta(t)=\zeta^{-1} t$ and $\vth(t^{-1})=\zeta t^{-1}$. 
If $\vth$ is an outer automorphism of $\gt h$, then  $\h [t,t^{-1}]^{\vartheta}$
is a subalgebra of a twisted Kac--Moody algebra, see \cite[Chap.~8]{kac} for details. Similarly to the case  of $\wg_-$, we identify $\wg_-^\vth:=(\wg_-)^\vth$ with 
$(\h [t,t^{-1}]/\h [t])^\vth$.  This leads to an action of $(\h [t])^\vth$ on 
$\wg_-^\vth$ and correspondingly on $\gS(\wg_-^\vth)$.
In this section, we consider the  invariants of $(\h [t])^\vth$ in $\gS(\wg_-^\vth)$, i.e., 
the subalgebra
\[
\gZ(\wh_-,\vth)=\gS(\wg_-^\vth)^{(\h [t])^\vth} , 
\] 
which can be thought of as a $\vth$-twisted analogue of $\gr\!(\z(\wh))$, cf.~\eqref{hat-inv}. 

Set $\g=\h^{n}$. Let  $\tilde\vartheta\in\Aut(\g)$ be the composition of $\vartheta$ applied to one copy 
of $\h$  only  and a cyclic permutation of the summands. Formally speaking, 
\[\tilde\vth(y_1^{(1)}+y_2^{(2)}+\ldots +y_n^{(n)})=y_n^{(1)}+\vth(y_1)^{(2)}+y_2^{(3)}+\ldots + y_{n-1}^{(n)} 
\]
for any $y_1,\ldots,y_n\in\h$.  
The order of $\tilde\vartheta$ is $N=nm$. Similarly to the case of $\wg_-$, there are isomorphisms   
$\g\simeq \h [t]^\vartheta/(t^{N}{-}1)$ and $\g_{(0)}\simeq \h [t]^\vartheta/(t^{N})$. 

Set $\q=\h _{(0)}$ transferring the notation of Section~\ref{sect:3}  to $\h $. 
Next we want to understand the connection between the Takiff algebra $\qqn=\q[t]/(t^n)$, modelled on $\q$,  and $\g_{(0)}$. Let $\tilde\zeta=\sqrt[N]{1}$ be such that $\zeta=\tilde\zeta^n$. 
 Set also $\omega=\tilde\zeta^m$. 

\begin{lm}[{cf. \cite[Sect.~3]{p09}}]   
\label{twist}
{\sf (i)} The Takiff algebra  $\qqn$ is a contraction of $\g_{(0)}$. 
\\[.2ex]
{\sf (ii)} If\/ $\ind\q=\rk\h $, then $\ind\g_{(0)}=\rk\g$. 
\\[.2ex]
{\sf (iii)} If there is a {\sf g.g.s.} for $\vartheta$, then there is a {\sf g.g.s.} for $\tilde\vartheta$. 
\\[.2ex]
{\sf (iv)} If\/ $\q$ has the  {\sl codim}--$2$ property, then so does $\g_{(0)}$.
\end{lm}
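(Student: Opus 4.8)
The plan is to realise all the algebras in question as quotients of a single graded object and use the compatible-bracket machinery already developed. Recall the isomorphisms $\g\simeq\h[t]^\vth/(t^N-1)$ and $\g_{(0)}\simeq\h[t]^\vth/(t^N)$, together with the fact that $\h[t]^\vth$ is the fixed-point subalgebra of $\vth$ acting on $\h[t]$ with $\vth(t)=\zeta^{-1}t$. Write $N=nm$ and $\tilde\zeta=\sqrt[N]1$ with $\zeta=\tilde\zeta^n$, $\omega=\tilde\zeta^m$.

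For \textsf{(i)}: I would exhibit an explicit polynomial contraction $\bbk^\star\to\GL(\g_{(0)})$ whose zero limit is $\qqn=\q[t]/(t^n)$. Concretely, decompose $\g_{(0)}=\h[t]^\vth/(t^N)$ by the $t$-degree mod $m$, i.e.\ group the homogeneous components $\h_i t^{i+m\BZ}$ (where $\h=\bigoplus_i\h_i$ is the $\vth$-grading of $\h$) into $m$ blocks; the block structure makes $\g_{(0)}$ an $\BN$-graded Lie algebra whose block indexed by the ``$t^m$-direction'' carries a residual $\q$-structure. Rescaling the blocks by powers of $s$ in the standard way (as in Section~\ref{subs:contr-&-inv}) and letting $s\to 0$ kills the brackets that raise the $t$-degree past a multiple of $m$ but not by a full $t^n$; what survives is exactly $\q\otimes\bbk[t]/(t^n)$ with $\q=\h_{(0)}$. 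This is the analogue of \cite[Sect.~3]{p09} and the verification is a direct bracket computation on eigenspaces.

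Given \textsf{(i)}, parts \textsf{(ii)} and \textsf{(iv)} are almost formal. For \textsf{(ii)}: $\ind\qqn=\ind\q[t]/(t^n)$ equals $n\cdot\rk\q=n\cdot\rk\h$ by the Ra\"is--Tauvel theorem~\cite{rt} (once $\ind\q=\rk\h$ is assumed, so that the Takiff construction behaves as for a reductive algebra); since $\qqn$ is a contraction of $\g_{(0)}$, semicontinuity of the index gives $\ind\g_{(0)}\le\ind\qqn=n\rk\h=\rk\g$, while the reverse inequality $\ind\g_{(0)}\ge\ind\g=\rk\g$ always holds, whence equality. For \textsf{(iv)}: by \cite[Theorem~2.8]{rt} (or its proof), the codim--$2$ property passes from $\q$ to its Takiff algebra $\qqn$; then one needs that it ascends along the contraction $\qqn\rightsquigarrow\g_{(0)}$ from \textsf{(i)}. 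Here I would either invoke a general semicontinuity statement for the singular locus under polynomial contractions (the closure of the ``moving'' singular set has codimension $\ge 2$, as in Proposition~\ref{prop:codim-sing}) together with the fact that $\ind$ does not jump (already proved in \textsf{(ii)}), or argue directly that the symbols of the codim--$2$ witnesses in $\cz(\qqn)$ remain in $\cz(\g_{(0)})$ via Proposition~\ref{prop:bullet} and still cut out a set of codimension $\ge2$.

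For \textsf{(iii)}: starting from a \textsf{g.g.s.}\ $F_1,\dots,F_r$ for $\vth$ on $\h$, I would first transport it through the Takiff/polarisation construction to get a \textsf{g.g.s.}\ for $\q=\h_{(0)}$ viewed inside $\qqn$ — this is Theorem~\ref{thm:kot14}\textsf{(ii)} combined with the computation $D_{\tilde\vth}=\tfrac{N}{2}(\dim\g-\dim\g_0)$, so it suffices to check that the $\vp$-degrees of the transported generators sum to $D_{\tilde\vth}$. Using the explicit $\phi$-polarisation formulas of Section~\ref{sect:cyc} (Eq.~\eqref{s-pol}, Proposition~\ref{pol}) adapted to the mixed cyclic-plus-$\vth$ automorphism $\tilde\vth$, each $F_i^\bullet$ polarises into a family whose top $\vp$-components are algebraically independent precisely because the $F_i^\bullet$ were for $\vth$; bookkeeping the degrees block by block gives the required sum. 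I expect \textsf{(iii)} to be the main obstacle: unlike \textsf{(i), (ii), (iv)}, it is not a soft contraction argument but requires tracking how the ``good'' leading components behave under a two-step grading (the $\vth$-grading of $\h$ refined by the $t$-adic/cyclic grading), and one must be careful that no leading component degenerates or that two of them collide in $\vp$-degree. The cleanest route is probably to reduce, via Lemma~\ref{twist}\textsf{(i)} and Theorem~\ref{thm:kot14}\textsf{(iii)}, to the statement that a \textsf{g.g.s.}\ for $\q$ yields a \textsf{g.g.s.}\ for $\q[t]/(t^n)$ — which is the known Takiff case — and then lift that back to $\g_{(0)}$ using that $\g_{(0)}$ contracts onto $\qqn$ with no index jump, so the count of nonzero bi-homogeneous components is forced to be maximal and hence they are algebraically independent.
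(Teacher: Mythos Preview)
Your treatment of parts \textsf{(i)}, \textsf{(ii)}, and \textsf{(iv)} matches the paper's proof: the paper exhibits the explicit contraction $\tilde\psi_s\vert_{\g_j}=s^i\,\id$ where $j=qm+i$ with $0\le i<m$; for \textsf{(ii)} it uses Ra\"is--Tauvel ($\ind\qqn=n\,\ind\q$) together with semicontinuity of index under contraction; for \textsf{(iv)} it uses that codim--$2$ passes from $\q$ to $\qqn$ by~\cite{rt,kot-T} and then invokes the general fact~\cite[(4.1)]{Y-imrn} that along a polynomial contraction with constant index the singular set can only grow --- precisely your first option.

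Part \textsf{(iii)} is where your proposal has a real gap. Your ``cleanest route'' confuses two different objects. A {\sf g.g.s.}\ for $\tilde\vth$ is by definition a set of homogeneous generators of $\gS(\g)^\g$ (invariants of the \emph{semisimple} algebra $\g=\h^{n}$) whose top $\tilde\vp$-components are algebraically independent; it is a statement about the pair $(\g,\tilde\vth)$, not about $\g_{(0)}$ or $\qqn$. There is no meaningful notion of ``{\sf g.g.s.}\ for $\q[t]/(t^n)$'', and knowing that $\cz(\qqn)$ or $\cz(\g_{(0)})$ is polynomial does not by itself produce generators of $\gS(\g)^\g$ with the required property. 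The direction of Theorem~\ref{thm:kot14}\textsf{(iii)} is the wrong one for you: it deduces polynomiality of $\cz_0$ \emph{from} a {\sf g.g.s.}, not conversely. Your proposed ``lift back'' via the contraction, forcing algebraic independence by a count of bi-homogeneous components, tacitly presupposes the upper bound $\sum_j\deg_{\tilde\vp}H_j\le D_{\tilde\vth}$, which is exactly what must be proved.

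The paper's argument for \textsf{(iii)} is the direct degree computation that your first sketch gestures at but does not carry out. One writes down explicit $\tilde\vth$-eigenvectors $F_i^{[k]}\in\gS(\g)^\g$ (a twisted variant of~\eqref{ik}), observes that $\deg_{\tilde\vp}F_i^{[k]}\le B_i^\bullet:=b_i^\bullet+b_i(n{-}1)m$ with equality only under a congruence on $k$, and that otherwise the bound drops successively by $m$; summing over $k=0,\dots,n{-}1$ gives $\sum_k\deg_{\tilde\vp}F_i^{[k]}\le nb_i^\bullet+m(2b_i{-}1)\tfrac{n(n{-}1)}{2}$, and summing over $i$ yields $\sum_j\deg_{\tilde\vp}H_j\le nD_\vth+N\tfrac{n{-}1}{2}\dim\h=D_{\tilde\vth}$. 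Combined with Theorem~\ref{thm:kot14}\textsf{(i)} this forces equality, hence a {\sf g.g.s.} The ``main obstacle'' you anticipated is real, and it is resolved by this explicit bookkeeping, not by a soft contraction argument.
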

\begin{proof}
{\sf (i)} \ Let $\g=\bigoplus_{0\le j<N} \g_j$ be the $\BZ_N$-grading of $\g$ defined by 
$\tilde\vartheta$. Recall that $\g_{(0)}$ is $\BN$-graded and the Lie bracket in $\g_{(0)}$ is defined by
$[\g_j,\g_{j'}]_0=[\g_j,\g_{j'}]\subset\g_{j+j'}$ if $j+j'<N$ and $[\g_j,\g_{j'}]_0=0$ otherwise. 
The Lie algebra $\qqn$ is also $\mathbb N$-graded and has the same components, however, the Lie bracket is 
different. Write $j=qm+i$ with $0\le i<m$ and similarly $j'=q'm+i'$. Then in 
$\qqn$, the bracket between $\g_j$ and $\g_{j'}$ is the same as in $\g_{(0)}$  
if $i+i'<m$. If $i+i'\ge m$, then $\g_j$ and $\g_{j'}$ commute in $\qqn$. 
This shows that $\qqn$ is a contraction of $\g_{(0)}$ corresponding to the following linear 
map:
\[
\tilde\psi_s\!: \g_{(0)}\to \g_{(0)} \ \text{ with } \ s\in\mK^\star, \ {\tilde\psi_s}|_{\g_j}= s^i \id_{\g_j}.
\]

{\sf (ii)}  By \cite[Th\'eor\`eme~2.8{\sf (i)}]{rt}, one has $\ind\qqn=n{\cdot}\ind\q$.
Therefore, if $\ind\q=\rk\h $, then $\ind\qqn=n{\cdot}\ind\q=\rk\g$. Because index cannot decrease under contractions, 
$\rk\g=\ind\g\le \ind\g_{(0)}\le \ind\qqn=\rk\g$.  This settles {\sf (ii)}. 
\\ \indent
{\sf (iii)} Let $F_1,\ldots,F_r\in\gS(\h)^\h$ be a {\sf g.g.s.} for $\vartheta$ such that 
$\vth(F_i)=\zeta^{\ell_i} F_i$ with $\ell_i\in\Z$ for each $i$. 
Regarding $F_i$ as an element of $\gS^{b_i}(\gt h^{(1)})$, set 
\begin{equation}              \label{ik-tilde}
       F_i^{[k]}=\frac{1}{n}\left(F_i+\omega^k\tilde\zeta^{-\ell_i}\tilde\vth(F_i)+\omega^{2k}\tilde\zeta^{-2\ell_i}\tilde\vth^2(F_i)+\ldots + \omega^{(n-1)k}\tilde\zeta^{(1-n)\ell_i}\tilde\vth^{n-1}(F_i)\right),
\end{equation}  
cf.~\eqref{ik}.  Here $\tilde\vth^n(F_i)=\vth(F_i) = \zeta^{\ell_i}F_i\in\gS(\gt h^{(1)})$. Furthermore,  
\[
\tilde\vth(F_i^{[k]})=\omega^{(n-1)k} \tilde\zeta^{\ell_i} F_i^{[k]}, \ \ \text{ where } \ \ 
\omega^{(n-1)k} \tilde\zeta^{\ell_i} =\tilde\zeta^{\ell_i-mk}.
\] 
We claim that 
\[
     \{H_{j}\mid 1\le j \le l\}=\{F_{i}^{[k]} \mid 1\le i\le r, 0\le k < n\}
\] 
is a {\sf g.g.s.} for $\tilde\vartheta$.

Let $\tilde\varphi\!: \bbk^\star\to {\rm GL}(\g)$ be the polynomial representation such that 
$\tilde\vp_s(x)=s^j x$ for $x\in \g_j$. Set $B_i^\bullet = b_i^\bullet+b_i(n-1)m$. Here 
$B_i^\bullet \equiv b_i^\bullet  \equiv \ell_i \pmod m$. Furthermore, 
$\deg_{\tilde\vp} F_{i}^{[k]} \le B_i^\bullet$, because $\tilde\vth^{\upsilon}(\gt h_j^{(1)})\subset \bigoplus_{q=0}^{n{-}1} \gt g_{qm+j}$ for 
$0\le j<m$ and $0\le\upsilon$. However, the equality $\deg_{\tilde\vp} F_{i}^{[k]} = B_i^\bullet$ is 
possible only if $B_i^\bullet -\ell_i+mk\in N\Z$. If this is not the case for $k$, then 
$\deg_{\tilde\vp} F_{i}^{[k]} \le B_i^\bullet-m$. And again the equality is possible only if 
$B_i^\bullet  -\ell_i+m(k-1)\in N\Z$. Iterating these arguments, we obtain that 
$\sum_{k=0}^{n-1} \deg_{\tilde\vp} F_{i}^{[k]} \le n b_i^\bullet  + m(2b_i-1) \frac{n(n-1)}{2}$ 
for each $1\le i\le r$. Hence 
\[
   \sum_{1\le j\le l}\deg_{\tilde\vp } H_{j}\le n\left( \sum_{i=1}^r b_i^\bullet \right)+m\frac{n(n-1)}{2}\sum_{i=1}^r(2b_i-1)= n D_\vartheta+ N\frac{n-1}{2}\dim\h =D_{\tilde\vth}.
\]
Now the claim follows from Theorem~\ref{thm:kot14}.

{\sf (iv)} \ Suppose that  $\q$ has the {\sl codim}--$2$ property, then so does the Takiff algebra 
$\qqn$~\cite{rt,kot-T}.  Furthermore, if the index does not change under a polynomial contraction, then 
the dimension of the singular subset can only increase~\cite[(4.1)]{Y-imrn}. Applying 
this to $\g_{(0)}$ and its contraction $\qqn$, we conclude that  $\g_{(0)}$ has the {\sl codim}--$2$ 
property.   
\end{proof}

\begin{thm}           \label{twist-t}
{\sf (i)} If\/ $\ind\h _{(0)}=\rk\h $, then $\gZ(\wh_-,\vth)$ is a 
Poisson-commutative subalgebra. \\[.2ex]
{\sf (ii)} The algebra $\gZ(\wh_-,\vth)$  
is a polynomial ring (in infinitely many variables) if and only if\/ $\gS(\g_{(0)})^{\g_{(0)}}$ is a 
polynomial ring for each $n\ge 1$ and $\g=\h^{n}$ with $\tilde\vth\in\Aut(\g)$ as above.
\end{thm}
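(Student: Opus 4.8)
The plan is to reduce both assertions to the finite-dimensional algebras of the previous sections. For $n\ge1$ put $\g=\h^{n}$ and let $\tilde\vth\in\Aut(\g)$ be as above, so that, by the isomorphisms recorded at the start of this section, $\g\simeq\h[t]^\vth/(t^{N}{-}1)$ and $\g_{(0)}\simeq\h[t]^\vth/(t^{N})$ with $N=nm$; write $\cz_0(n)=\gS(\g_{(0)})^{\g_{(0)}}$ for the associated Poisson centre. The first and principal task is to realise $\gZ(\wh_-,\vth)$ as a limit of the algebras $\cz_0(n)$, in the same way that $\gr\!(\z(\wh))=\gS(\wg_-)^{\h[t]}$ is a limit of the Poisson centres of the truncated current algebras $\h[t]/(t^{N})$; this is precisely what is meant by calling $\gZ(\wh_-,\vth)$ a ``$\vth$-twisted analogue of $\gr\!(\z(\wh))$''. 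Concretely, $\wg_-^\vth$ is the increasing union of the graded subspaces $V_{N}=\bigoplus_{j=1}^{N}\h_{-j}t^{-j}$; each $V_{N}$ is preserved by $(\h[t])^\vth$, the action factoring through $\h[t]^\vth/(t^{N})\simeq\g_{(0)}$, and the residue pairing identifies $V_{N}$ with the coadjoint module of $\g_{(0)}$. Since passing to $(\h[t])^\vth$-invariants commutes with the filtered union, this should give an identification of $\gZ(\wh_-,\vth)$ with $\varinjlim_{n}\cz_0(n)$ as filtered algebras. The point I expect to be the main obstacle is to check that under these identifications the Poisson bracket inherited from the Lie subalgebra $\wg_-^\vth\subset(\h[t,t^{-1}])^\vth$ is compatible, in each graded degree, with the Lie--Poisson brackets of the rather different-looking algebras $\g_{(0)}\simeq\h[t]^\vth/(t^{N})$; this is best carried out on the level of the explicit generators described below, following the untwisted prototype~\eqref{hat-inv} step by step.

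Granting this, part (i) follows quickly. Each $\cz_0(n)$ is Poisson-commutative, being a Poisson centre. The hypothesis enters through Lemma~\ref{twist}(ii), which here reads: if $\ind\h_{(0)}=\rk\h$ then $\ind\g_{(0)}=\rk\g$ for every $n$, i.e.\ $0\in\BP_{\sf reg}$; this is exactly what makes each $\cz_0(n)$ large enough for the system $\{\cz_0(n)\}$ to be saturated, so that $\varinjlim_{n}\cz_0(n)$ is all of $\gZ(\wh_-,\vth)$ rather than a proper subalgebra. Now any two elements of $\gZ(\wh_-,\vth)$ lie, after the identification of the first paragraph, in a common $\cz_0(n)$, where their Poisson bracket vanishes; hence $\gZ(\wh_-,\vth)$ is Poisson-commutative.

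For part (ii), both implications pass through the same limit. Assume first that $\vth$ admits a {\sf g.g.s.} $F_1,\dots,F_r$ on $\h$; then $\tilde\vth$ admits one for every $n$ by Lemma~\ref{twist}(iii), and, exactly as in Proposition~\ref{pol}, each $\cz_0(n)$ acquires a distinguished generating set consisting of $\phi$-polarisations of the $F_i$. The number of these grows with $n$, and for $n'\ge n$ the polarisations attached to $n$ form an initial segment of those attached to $n'$, so the transition maps $\cz_0(n)\hookrightarrow\cz_0(n')$ carry free generators to free generators; passing to the union, $\gZ(\wh_-,\vth)$ is generated by all the $\phi$-polarisations of the $F_i$. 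If every $\cz_0(n)$ is polynomial, these generators are algebraically independent at each finite stage, hence algebraically independent, so $\gZ(\wh_-,\vth)$ is polynomial; conversely, if $\gZ(\wh_-,\vth)$ is polynomial, then each $\cz_0(n)$ is recovered as its quotient by the ideal generated by the ``higher'' free generators, hence is polynomial. The step requiring care is that the polarisations really do organise into a nested system of free generating sets, and it is here that the combinatorial criterion of Theorem~\ref{thm:kot14} — the equality $\sum_{j}\deg_{\tilde\vp}H_{j}=D_{\tilde\vth}$ characterising a {\sf g.g.s.} — must be invoked at each level $n$. Finally, if $\vth$ does not admit a {\sf g.g.s.}, then already $\cz_0(1)=\gS(\h_{(0)})^{\h_{(0)}}$ fails to be polynomial (cf.\ Remark~\ref{rem:Z-bad}), and then the limit description forces $\gZ(\wh_-,\vth)$ to be non-polynomial as well, so the equivalence holds trivially.
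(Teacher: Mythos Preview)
Your argument for {\sf (i)} has a genuine gap precisely at the point you flag as the ``main obstacle'', and it cannot be patched in the way you suggest. The subspace $V_N$ is not a Poisson subalgebra of $\gS(\wg_-^\vth)$, since $[xt^{-a},yt^{-b}]=[x,y]t^{-a-b}$ escapes $V_N$ once $a+b>N$; and the Lie-algebra quotient $\wg_-^\vth\big/\bigoplus_{a>N}(\h t^{-a})^\vth$ is \emph{nilpotent}, hence not isomorphic to $\g_{(0)}\simeq\h[t]^\vth/(t^N)$, which contains the reductive subalgebra $\h_0$ in grade~$0$. So the fact that $\cz_0(n)$ is the Poisson centre for $\{\,,\,\}_0$ says nothing about the bracket inherited from $\wg_-^\vth$. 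You have also misread where the hypothesis enters: the equality $\gZ(\wh_-,\vth)=\varinjlim_n\cz_0(n)$ holds unconditionally, because taking $(\h[t])^\vth$-invariants commutes with the filtered union $\gS(\wg_-^\vth)=\bigcup_N\gS(V_N)$; no ``saturation'' issue arises.

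The paper's route passes through a different quotient and uses the pencil. Given $f_1,f_2\in\gZ(\wh_-,\vth)\cap\gS(V_{N'})$ with $N'=n'm$, one sets $n=2n'$, $N=nm$, and maps to $\gS(\g)$ via the Lie-algebra surjection $\wg_-^\vth\twoheadrightarrow\g$ obtained from $t^{-N}=1$ (not $t^{-N}=0$). This \emph{is} a Poisson map onto $(\gS(\g),\{\,,\,\}_1)$, and since $\{f_1,f_2\}\in\gS(V_N)$ while $V_N\to\g$ is a linear isomorphism, one has $\{f_1,f_2\}=0$ in $\wg_-^\vth$ iff $\{\bar f_1,\bar f_2\}_1=0$ in $\g$. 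The images $\bar f_1,\bar f_2$ lie in $\cz_0$, and now the hypothesis is genuinely used: $\ind\h_{(0)}=\rk\h$ forces $\ind\g_{(0)}=\rk\g$ by Lemma~\ref{twist}{\sf (ii)}, i.e.\ $0\in\BP_{\sf reg}$, whence $\cz_0\subset\gZ(\g,\tilde\vth)$, and the latter is Poisson-commutative for \emph{every} bracket in the pencil, in particular for $\{\,,\,\}_1$. Without the index hypothesis there is no reason for $\cz_0$ to be $\{\,,\,\}_1$-commutative. Your treatment of {\sf (ii)} is broadly on target, but the detour through {\sf g.g.s.} and $\phi$-polarisations, and the case split on whether $\vth$ admits a {\sf g.g.s.}, are unnecessary: the paper argues directly with the graded direct-limit structure, extending homogeneous free generators from each level to the next.
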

\begin{proof}
For $N=nm$, set  
\[
     \mathbb W_N:=(\h  t^{-N}\oplus\ldots\oplus\h  t^{-1})^\vartheta\subset\wg_-^\vth.
\] 
Then  $\h [t]^{\vartheta}$ acts on $\mathbb W_N$ via its quotient $\g_{(0)}$ and 
$\mathbb W_N\simeq \g_{(0)}$ as a $\g_{(0)}$-module.
Therefore there is an isomorphism of commutative algebras 
$\gS(\mathbb W_N)^{\h [t]^{\vartheta}}\simeq \gS(\g_{(0)})^{\g_{(0)}}$. 

If $f_1,f_2\in \gS(\wg_-^\vartheta)$, then there is 
$N'=n'm$ such that  $f_1,f_2\in \mathbb W_{N'}$. Set $n=2n'$. 
Then $\{f_1,f_2\}=0$ if and only if the images $\bar f_1,\bar f_2$ of $f_1, f_2$ in 
$\gS(\g)\simeq \gS(\wg^\theta)/(t^{-N}-1)$  
Poisson-commute.  The $\Z_N$-grading of $\g$ inherited from $\wh_-$ coincides with the 
$\Z_N$-grading defined by $\tilde\vth$. If $f_1,f_2\in \gZ(\wh_-,\vth)$, then 
 $\bar f_1,\bar f_2\in \gS(\g_{(0)})^{\g_{(0)}}$ by the construction.  
The assumption  $\ind\h _{(0)}=\rk\h $ implies that  $\ind\g_{(0)}=\rk\g$, see Lemma~\ref{twist}.
Thus $\gS(\g_{(0)})^{\g_{(0)}}$ is contained in the Poisson-commutative 
subalgebra $\gZ(\g,\tilde\vth)\subset \gS(\g)$ and hence $\{\bar f_1,\bar f_2\}=0$. 

{\sf (ii)} The algebra $\gZ(\wh_-,\vth)=\varinjlim \gS(\mathbb W_{nm})^{\h [t]^{\vartheta}}$ has  
a direct limit structure. If $\gZ(\wh_-,\vth)$ is a polynomial ring, then 
$\gS(\mathbb W_{N})^{\h [t]^{\vartheta}}\simeq\gS(\g_{(0)})^{\g_{(0)}}$ has to be a polynomial ring for each $n$. 

In order to prove the opposite implication, suppose that  $\gS(\mathbb W_{N})^{\h [t]^{\vartheta}}$
is a polynomial ring for each $n$. By a standard argument on graded algebras,  see e.g. the proof of Lemma~\ref{kosiks-lemma}, 
any algebraically independent set of generators of $\gS(\mathbb W_N)^{\h [t]^{\vartheta}}$ extends 
to an algebraically independent set of generators of $\gS(\mathbb W_{N{+}m})^{\h [t]^{\vartheta}}$.
In  the direct limit, one obtains a set of algebraically independent  generators of  $\gZ(\wh_-,\vth)$.
\end{proof}

If $\ind\h _{(0)}=\rk\h $, there is a {\sf g.g.s.} for $\vartheta$, and $\h _{(0)}$ has the {\sl codim}--$2$ 
property, then $\gS(\g_{(0)})^{\g_{(0)}}$ is a polynomial ring, see Lemma~\ref{twist} and 
Theorem~\ref{thm:kot14}. Set $b_{i,\bullet}=b_i(m-1)-b_i^\bullet$. 

\begin{prop}        \label{twist-p}
If \/ $\ind\h _{(0)}=\rk\h $, there is a {\sf g.g.s.} $F_1,\ldots,F_r\in\gS(\h)^{\h}$ for $\vartheta$, and $\h _{(0)}$ has the  {\sl codim}--$2$ property, then 
$\gZ(\wh_-,\vartheta)$ is freely generated by the 
$(t^{-1})$-polarisations $(F_i)_ {[b_{i,\bullet}+km]}$ with $k\ge 0$ related to the subspaces $((\h  t^{-j})^{\vartheta})^*$. 
\end{prop}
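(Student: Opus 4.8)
The plan is to descend to the finite-dimensional truncations that appear in the proof of Theorem~\ref{twist-t}, where the statement becomes a $\vartheta$-twisted version of Proposition~\ref{pol}, and then to pass to the direct limit.

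Some preparation first. By Lemma~\ref{kosiks-lemma} I may assume the given {\sf g.g.s.}\ $F_1,\dots,F_r$ of $\gS(\h)^{\h}$ consists of $\vartheta$-eigenvectors, $\vartheta(F_i)=\zeta^{\ell_i}F_i$ with $\ell_i\in\Z$, so that $b_i^\bullet\equiv\ell_i\pmod m$. Fix $n\ge 1$, put $N=nm$ and $\g=\h^{n}$, and let $\tilde\vartheta\in\Aut(\g)$ be as in the statement. By Lemma~\ref{twist}{\sf (ii)}--{\sf (iv)}, the contraction $\g_{(0)}\simeq\h [t]^{\vartheta}/(t^{N})$ satisfies $\ind\g_{(0)}=\rk\g=:l$ (so $l=nr$), has the {\sl codim}--$2$ property, and admits the {\sf g.g.s.}
\[
\{H_j\mid 1\le j\le l\}=\{F_i^{[k]}\mid 1\le i\le r,\ 0\le k<n\}
\]
built from $F_1,\dots,F_r$ by~\eqref{ik-tilde}. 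Hence Theorem~\ref{thm:kot14}{\sf (iii)} applies, and $\cz_0:=\gS(\g_{(0)})^{\g_{(0)}}$ is a polynomial ring freely generated by the top bi-homogeneous components $H_1^\bullet,\dots,H_l^\bullet$ of the $F_i^{[k]}$ w.r.t.\ the $\tilde\vp$-grading. Recall finally, from the proof of Theorem~\ref{twist-t}, the $\h [t]^{\vartheta}$-equivariant identification $\mathbb W_N\simeq\g_{(0)}$, the resulting isomorphism $\gS(\mathbb W_N)^{\h [t]^{\vartheta}}\simeq\cz_0$, and $\gZ(\wh_-,\vartheta)=\varinjlim_n\gS(\mathbb W_{nm})^{\h [t]^{\vartheta}}$.

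The core step is to identify, inside $\gS(\mathbb W_N)$, the free generators $H_j^\bullet$ with $(t^{-1})$-polarisations of the $F_i$. I would do this by unwinding~\eqref{ik-tilde} exactly as in the proof of Proposition~\ref{pol}: pulling $F_i$ back to $\gS^{b_i}(\h^{(1)})$ and applying the contraction $\tilde\vp$, together with the identifications $\g\simeq\h [t]^{\vartheta}/(t^{N}{-}1)$ and $\g_{(0)}\simeq\h [t]^{\vartheta}/(t^{N})$ of Lemma~\ref{twist}, one sees that every nonzero bi-homogeneous component of $F_i^{[k]}$ is, up to a nonzero scalar, a $\phi$-polarisation of $F_i$ relative to $\wg_-^{\vartheta}=\bigoplus_{j\ge 1}(\h t^{-j})^{\vartheta}$ and its dual. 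The averaging in~\eqref{ik-tilde} retains precisely the $\tilde\vp$-homogeneous pieces of $\tilde\vp$-degree $\equiv\ell_i-mk\pmod N$; feeding in the bound $\deg_{\tilde\vp}F_i^{[k]}\le B_i^\bullet=b_i^\bullet+b_i(n-1)m$ and the congruence $b_i^\bullet\equiv\ell_i\pmod m$ from the proof of Lemma~\ref{twist}{\sf (iii)}, one obtains that the top component $H_j^\bullet=(F_i^{[k]})^\bullet$ is a nonzero multiple of $(F_i)_{[b_{i,\bullet}+k'm]}$ for a unique $k'=k'(i,k)$, and that $k\mapsto k'$ is a bijection of $\{0,1,\dots,n-1\}$. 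Comparing cardinalities ($nr=l=\trdeg\cz_0$) and degrees, the set $\{(F_i)_{[b_{i,\bullet}+km]}\mid 1\le i\le r,\ 0\le k<n\}$ coincides, up to rescaling, with the free generating system $\{H_j^\bullet\}$, and so is itself a free generating system of $\gS(\mathbb W_N)^{\h [t]^{\vartheta}}$. This identification is the step I expect to cost the most effort: carefully tracking how the semiinvariance weight $\ell_i$, the $\BZ_m$-grading of $\h$ defined by $\vartheta$, and the composite structure of $\tilde\vartheta$ (with its three relevant roots of unity $\zeta$, $\tilde\zeta$, $\omega$) combine to single out the polarisation index $b_{i,\bullet}+k'm$ realising $(F_i^{[k]})^\bullet$, and checking that none of the claimed polarisations vanishes, is a laborious but essentially routine extension of the bookkeeping in Proposition~\ref{pol} and Lemma~\ref{lm-pol}.

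It remains to pass to the limit. The polarisations $(F_i)_{[b_{i,\bullet}+km]}$ are homogeneous elements of $\gS(\wg_-^{\vartheta})$ that do not depend on $n$ and are compatible with the inclusions $\mathbb W_{nm}\hookrightarrow\mathbb W_{(n+1)m}$; replacing $n$ by $n+1$ simply adjoins the $r$ new generators $(F_i)_{[b_{i,\bullet}+nm]}$, $1\le i\le r$. By the graded-algebra argument already invoked in the proof of Theorem~\ref{twist-t}{\sf (ii)} (the same as in Lemma~\ref{kosiks-lemma}), such a compatible system of free generating sets of the $\gS(\mathbb W_{nm})^{\h [t]^{\vartheta}}$ assembles in the direct limit into a free generating set of $\gZ(\wh_-,\vartheta)$. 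Hence $\gZ(\wh_-,\vartheta)$ is freely generated by all $(F_i)_{[b_{i,\bullet}+km]}$ with $1\le i\le r$ and $k\ge 0$, as claimed.
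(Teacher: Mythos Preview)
Your proposal is correct and follows essentially the same route as the paper: descend to the finite truncations $\mathbb W_N\simeq\g_{(0)}$, invoke Lemma~\ref{twist} and Theorem~\ref{thm:kot14}{\sf (iii)} to obtain the free generators $(F_i^{[k]})^\bullet$ of $\gS(\g_{(0)})^{\g_{(0)}}$, identify these (up to the scalar $n^{b_i}$) with the polarisations $(F_i)_{[b_{i,\bullet}+km]}$ by the bookkeeping analogous to Proposition~\ref{pol} and the congruences from the proof of Lemma~\ref{twist}{\sf (iii)}, and pass to the direct limit. The paper is marginally more explicit about the scalar and about anchoring one value of your bijection $k\mapsto k'$, but the architecture is identical.
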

\begin{proof}
First we enlarge on $t^{-1}$-polarisations. For $k\in \mathbb Z$, let $\bar k\in\{0,\ldots,m{-}1\}$ be the 
residue of $k$ modulo $m$.  Then $\wh_-^{\vartheta}=\bigoplus_{k\le -1} \h _{\bar k} t^{k}$. 
Let $\phi_s\!: (\wh_-^\vartheta)^* \to (\wh_-^\vartheta)^*$ with $s\in\mK^\star$ be the linear map  multiplying elements of 
$(\h _{\bar k}t^{k})^*$ with $s^{-k-1}$. Next we canonically identify $(\h _{\bar k} t^{k})^*$ with 
$\h _{\bar k}^*$. Then an element of $(\wg_-^\vartheta)^*$ is a finite 
sequence $\xi=(\xi_{-1},\xi_{-2},\ldots,\xi_{-L})$ with $\xi_{k}\in \h _{\bar k}^*$. 
For such a  $\xi$, set $|\xi|= \sum_{i=1}^L \xi_{-i} \in \h ^*$. 
If $F\in\gS(\h )$, then $F(|\phi_s(\xi)|)=\sum_{k\ge 0}  s^kF_{[k]}(\xi)$, where the sum is actually finite. If $n=1$, then  $\mathbb W_m\simeq\gt h_0$ and $(F_i)_{[b_{i,\bullet}]}$ identifies with $F_i^\bullet\in\gS(\h_{(0)})^{\h_{(0)}}$.

Recall that $\gZ(\wh_-,\vth)=\varinjlim \gS(\mathbb W_{nm})^{\h [t]^{\vartheta}}$
and that $ \gS(\mathbb W_{N})^{\h [t]^{\vartheta}}\simeq \gS(\g_{(0)})^{\g_{(0)}}$ for 
$N=nm$. Now we identify the vector spaces $\mathbb W_N$ and $\g$. For  $0\le k<n$, 
let $F_{i}^{[k]}\in\gS(\g)$ be defined by \eqref{ik-tilde}. Then 
$\gS(\g_{(0)})^{\g_{(0)}}$ is freely generated by the highest components $(F_{i}^{[k]})^\bullet$ 
with $1\le i\le r$, $0\le k<n$. 
The discussion in the proof of Lemma~\ref{twist}{\sf (iii)} shows that if  
$B_i^\bullet -\ell_i+mk\in N\Z$, then $(F_i^{[k]})^\bullet=\frac{1}{n^{b_i}} (F_i)_{[b_{i,\bullet}]}\in\gS(\mathbb W_m)$. 
Analogously to the case of $\vth=\id_{\gt h}$, cf. Proposition~\ref{pol}, 
one can see that  
\[
    \{n^{b_i}(F_{i}^{[k]})^\bullet \mid 0\le k < n\}=\{(F_i)_{[b_{i,\bullet}+km]} \mid 0\le k<n\}.
\]
This completes the proof.  
\end{proof} 

\begin{rmk}
Theorem~\ref{twist-t} emphasises the importance of the equality $\ind\h_{(0)}=\rk\h$. 
Let us recall that it holds for the involutions by~\cite{p07}. Furthermore, if $\h$ is a classical Lie algebra 
and $\vth$ is an involution, then there is a {\sf g.g.s.} for $\vartheta$ and $\h _{(0)}$ has the 
{\sl codim}--$2$ property \cite{p07,contr}, i.e., Proposition~\ref{twist-p} applies. This means
that the $\vth$-twisted Poisson  analogue of the Feigin--Frenkel centre exists for many automorphisms 
and indicates that probably a $\vth$-twisted version of $\gt z(\wg)$ can be constructed 
at least for some $\vth$. Often both $\vth$-twisted objects are expected to be  
polynomial rings in infinite number of variables. Finally, we mention that results of~\cite{kot-T} 
can be used for the description of $\gS(\g_{(0)})^{\g_{(0)}}$. 
\end{rmk}


\begin{thebibliography}{NM96}

\bibitem[CFR]{G-07}
{\sc A. Chervov, G. Falqui,} and {\sc L. Rybnikov}. 
Limits of Gaudin algebras, quantization of bending flows, Jucys--Murphy elements and 
Gelfand--Tsetlin bases, 
{\it Lett. Math. Phys.}, {\bf 91}\,(2010), no.\,2, 129--150.

\bibitem[DZ05]{duzu}
{\sc J.-P.~Dufour} and  {\sc N.T.~Zung}.  
``Poisson structures and their normal forms''. Progress in Mathematics, {\bf 242}. 
Birkh\"auser Verlag, Basel, 2005.

\bibitem[FF92]{ff:ak}
{\sc B. Feigin} and {\sc E. Frenkel}.
{Affine Kac--Moody algebras at the critical level
and Gelfand--Dikii algebras},
{\it Int. J. Mod. Phys.} A{\bf 7}, {\it Suppl}. 1A (1992), 197--215.

\bibitem[FFR]{FFRe}
{\sc B.~Feigin}, {\sc E.~Frenkel}, and {\sc N.~Reshetikhin}.
Gaudin model, Bethe Ansatz and critical level,
{\it Comm. Math. Phys.}, {\bf 166}\,(1994), no.\,1,  27--62.

\bibitem[F07]{f:lc}
{\sc E.~Frenkel}.
{``Langlands correspondence for loop groups''}, Cambridge
Studies in Advanced Mathematics,
103. Cambridge University Press, Cambridge, 2007.

\bibitem[GZ00]{GZ}
{\sc I.M.~Gelfand} and {\sc I.S.~Zakharevich}.
Webs, Lenard schemes, and the local geometry of bi-Hamiltonian Toda and Lax structures, 
{\it Selecta Math., New Ser.},  {\bf 6}\,(2000), 131--183.

\bibitem[JS10]{JSh}
{\sc A.~Joseph} and  {\sc D.~Shafrir}.
Polynomiality of invariants, unimodularity and adapted pairs,  
{\it Transformation Groups},  {\bf 15}\,(2010), no.\,4, 851--882.

\bibitem[Ka83]{kac}
{\sc V.G.~Kac}. ``Infinite-dimensional Lie algebras''. Birkh\"auser, 1983.

\bibitem[K63]{ko63}
{\sc B.~Kostant}. Lie group representations on polynomial rings,
{\it  Amer. J. Math.}, {\bf 85}\,(1963), 327--404.

\bibitem[KR71]{kr71}
{\sc B.~Kostant} and {\sc S.~Rallis}. 
Orbits and representations associated with symmetric spaces, 
{\it Amer. J. Math.}, {\bf 93}\,(1971), 753--809.

\bibitem[MF78]{mf}
{\sc A.S.\,Mishchenko} and {\sc A.T.\,Fomenko}.
Euler equation on finite-dimensional Lie groups,
{\it Math. USSR-Izv.} {\bf 12}\,(1978), 371--389.

\bibitem[MY19]{m-y}
{\sc A.~Molev} and {\sc O.~Yakimova}.
Quantisation and nilpotent limits of Mishchenko--Fomenko subalgebras,
{\it Represent. Theory}, {\bf 23}\,(2019), 350--378.

\bibitem[P07]{p07}
{\sc D.~Panyushev}.
On the coadjoint representation of ${\mathbb Z}_2$-contractions of reductive Lie
algebras, {\it Adv. Math.}, {\bf 213}\,(2007), 380--404.

\bibitem[P09]{p09}
{\sc D.~Panyushev}.
Periodic automorphisms of Takiff algebras, contractions, and $\theta$-groups,
{\it Transformation Groups}, {\bf 14}, no.\,2 (2009), 463--482.

\bibitem[PPY]{ppy} 
{\sc D.~Panyushev, A.~Premet} and {\sc O.~Yakimova}. 
On symmetric invariants of centralisers in reductive  Lie algebras, 
{\it J. Algebra}, {\bf 313}\,(2007), 343--391.

\bibitem[PY08]{codim3}
{\sc D.~Panyushev} and  {\sc O.~Yakimova}.
The argument shift method and maximal commutative subalgebras of Poisson algebras,
{\it Math. Res. Letters}, {\bf 15}\,(2008), no.\,2, 239--249.

\bibitem[PY20]{kruks}
{\sc D.~Panyushev} and  {\sc O.~Yakimova}.
Poisson-commutative subalgebras and complete integrability on non-regular coadjoint orbits 
and flag varieties,
{\it Math. Zeitschrift}, {\bf 295}\,(2020), 101--127.

\bibitem[PY20']{kot-T}
{\sc D.~Panyushev} and {\sc  O.~Yakimova}. 
Takiff algebras with polynomial rings of symmetric invariants, 
{\it Transformation Groups}, {\bf 25}\,(2020),  609--624. 

\bibitem[PY]{OY}
{\sc D.~Panyushev} and  {\sc O.~Yakimova}.
Poisson-commutative subalgebras of ${\gS}(\mathfrak g)$ associated with involutions,
{\it Intern. Math. Res. Notices}, to appear. \ 
\href{https://doi.org/10.1093/imrn/rnaa041}{\bf 10.1093/imrn/rnaa041}, 
\href{https://arxiv.org/abs/1809.00350}{\tt \bfseries arXiv:1809.00350}.

\bibitem[RT92]{rt}
{\sc M.~Ra\"is} and {\sc P.~Tauvel}.
Indice et polyn\^omes invariants pour certaines alg\`ebres de Lie,
{\it J. reine angew. Math.}, {\bf 425}\,(1992), 123--140.

\bibitem[R06]{r:si}
{\sc L.G.~Rybnikov}.
{The shift of invariants method and the Gaudin model},
{\it Funct. Anal. Appl.} {\bf 40}\,(2006), 188--199.

\bibitem[R18]{L-Gau}
{\sc L.\,G. Rybnikov}. 
{A proof of the Gaudin Bethe Ansatz conjecture},
{\it Intern. Math. Res. Notices},  {\bf 2020}, no.\,22, (2020),  8766--8785
(\href{https://arxiv.org/abs/1608.04625}{\tt \bfseries arXiv:1608.04625}).

\bibitem[Sp89]{spr} 
{\sc T.A.~Springer}.  Aktionen reduktiver Gruppen auf Variet\"aten,
in: ``Algebraische Transformationsgruppen und Invariantentheorie", DMV-Seminar, Bd. {\bf 13}, 
Basel--Boston--Berlin: Birkh\"auser 1989, pp.\,3--39.

\bibitem[Vi76]{vi76}
{\rusc {E1}.B.~Vinberg}. {\rus Gruppa Ve\u\i lya 
graduirovanno{\u\i} algebry Li}, {\rusi Izv. AN SSSR. Ser. Matem.} 
{\bf 40}, {\rus N0}\,3 (1976), 488--526 (Russian). English translation: 
{\sc E.B.\,Vinberg}. The Weyl group of a graded Lie algebra,
{\it Math. USSR-Izv.} {\bf 10}\,(1976), 463--495.

\bibitem[Vi79]{vi79}
{\rusc {E1}.B.~Vinberg.} {\rus Klassifikatsiya odnorodnykh  nil{\cprime}potentnykh
{e1}lementov poluprosto{\u\i} gra\-du\-iro\-vanno{\u\i} algebry Li, \ V sb.:}
{\rusi "Trudy seminara po vekt. i tenz. analizu"}, {\rus t.}\,{\bf 19}, {\rus s.}~155--177. 
{\rus Moskva: MGU} 1979 (Russian). 
English translation: {\sc E.B.~Vinberg}. 
Classification of homogeneous nilpotent elements of a semisimple graded Lie algebra,
{\it Selecta Math. Sov.}, {\bf 6}\,(1987), 15--35.

\bibitem[Vi90]{vi90} 
{\rusc {E1}.B.~Vinberg}. {\rus O nekotorykh kommutativnykh
podalgebrakh universal{\cprime}no{\u\i} obertyvayuwe{\u\i} algebry},
{\rusi Izv. AN SSSR. Ser. Matem.} {\bf 54}, {\rus N0}\,1 (1990), 3--25 (Russian).
English translation: 
{\sc E.B.~Vinberg.} On certain commutative subalgebras of a
universal enveloping algebra, {\it Math. USSR-Izv.}
{\bf 36}\,(1991), 1--22.

\bibitem[Lie3]{t41} {\rusc {E1}.B.~Vinberg, V.V.~Gorbatseviq, A.L.~Oniwik}.
``{\rusi Gruppy i algebry Li - 3}'', {\rus  Sovrem. probl. matematiki. Fundam. napravl., t.\,41.
Moskva: VINITI} 1990 (Russian).
English translation: {\sc V.V.\,Gorbatsevich, A.L.\,Onishchik} and {\sc E.B.\,Vinberg}.
``Lie Groups and Lie Algebras III'' (Encyclopaedia Math. Sci., vol.~41) Berlin: Springer 1994.

\bibitem[Y14]{contr}
{\sc O.~Yakimova}.
One-parameter contractions of Lie--Poisson brackets,
{\it J. Eur. Math. Soc.}, {\bf 16}\,(2014), 387--407.

\bibitem[Y17]{Y-imrn}
{\sc O.~Yakimova}. 
Symmetric invariants of $\mathbb Z_2$-contractions and other semi-direct products, 
{\it Intern. Math. Res. Notices}, (2017) 2017 (6): 1674--1716. 

\end{thebibliography}
\end{document}